\newtheorem{theo}{Theorem}[section]
\newtheorem{defi}[theo]{Definition}
\newtheorem{lem}[theo]{Lemma}
\newtheorem{prop}[theo]{Proposition}
\newtheorem{rem}[theo]{Remark}
\newtheorem{exam}[theo]{Example}
\newcommand{\hgot}{\ensuremath{\mathfrak{h}}}
\newcommand{\kgot}{\ensuremath{\mathfrak{k}}}
\newcommand{\tgot}{\ensuremath{\mathfrak{t}}}
\newcommand{\zgot}{\ensuremath{\mathfrak{z}}}
\newcommand{\Bcal}{\ensuremath{\mathcal{B}}}
\newcommand{\Ccal}{\ensuremath{\mathcal{C}}}
\newcommand{\Dcal}{\ensuremath{\mathcal{D}}}
\newcommand{\Ecal}{\ensuremath{\mathcal{E}}}
\newcommand{\Lcal}{\ensuremath{\mathcal{L}}}
\newcommand{\Ncal}{\ensuremath{\mathcal{N}}}
\newcommand{\Ocal}{\ensuremath{\mathcal{O}}}
\newcommand{\Qcal}{\ensuremath{\mathcal{Q}}}
\newcommand{\Scal}{\ensuremath{\mathcal{S}}}
\newcommand{\Xcal}{\ensuremath{\mathcal{X}}}
\newcommand{\Zcal}{\ensuremath{\mathcal{Z}}}
\newcommand{\Ucal}{\ensuremath{\mathcal{U}}}
\newcommand{\Z}{\ensuremath{\mathbb{Z}}}
\newcommand{\C}{\ensuremath{\mathbb{C}}}
\newcommand{\R}{\ensuremath{\mathbb{R}}}
\newcommand{\tore}{\ensuremath{\mathbb{T}}}
\newcommand{\Lbb}{\ensuremath{\mathbb{L}}}
\newcommand{\mm}{\ensuremath{\hbox{\rm m}}}
\newcommand{\tr}{\ensuremath{\hbox{\bf Tr}}}
\newcommand{\esp}{\ensuremath{\varepsilon}}
\newcommand{\fgene}{\ensuremath{\mathcal{C}^{-\infty}}}
\newcommand{\croc}{\ensuremath{\hookrightarrow}}
\newcommand{\indice}{\ensuremath{\hbox{\rm Index}}}
\newcommand{\p}{\ensuremath{\Delta}}
\newcommand{\pf}{\ensuremath{\overrightarrow{\Delta}}}
\newcommand{\Cr}{\ensuremath{\hbox{\rm Cr}}}
\newcommand{\Char}{\ensuremath{\hbox{\rm Char}}}
\newcommand{\Thom}{\ensuremath{\hbox{\rm Thom}}}
\newcommand{\Rfor}{\ensuremath{R^{-\infty}}}
\newcommand{\Rforc}{\ensuremath{R^{-\infty}_{tc}}}
\newcommand{\spinq}{\ensuremath{\Qcal_{\hbox{\tiny spin}}}}
\newcommand{\spin}{\ensuremath{\rm Spin}}
\newcommand{\spinc}{\ensuremath{{\rm Spin}^{c}}}
\def \K {{\rm \bf K}}
\def \T {{\rm T}}
\def \what {\widehat}
\def \indB {{\rm Ind}^{^K}_{_{K_{\beta}}}}
\def \indHa {{\rm Ind}^{^H}_{_{H_a}}}
\def \clif {{\bf c}}
\begin{document}

\title{Spin-quantization commutes with reduction}

\author{Paul-Emile  PARADAN}

\address{Institut de Math\'ematiques et de Mod\'elisation de Montpellier (I3M), 
Universit\'e Montpellier 2} 

\email{Paul-Emile.Paradan@math.univ-montp2.fr}

\date{November 2009}

\begin{abstract}
In this paper, we prove that the ``quantization commutes with reduction'' phenomenon 
of Guillemin-Sternberg \cite{Guillemin-Sternberg82} applies in the context 
of the metaplectic correction. 
\end{abstract}

\maketitle

{\def\thefootnote{\relax}
\footnote{{\em Keywords} : moment map, reduction, geometric quantization,
transversally elliptic symbol.\\
{\em 1991 Mathematics Subject Classification} : 58F06, 57S15, 19L47.}
\addtocounter{footnote}{-1}
}

{\small
\tableofcontents}


\section{Introduction}

Let $K$ be a compact connected Lie group with Lie algebra $\kgot$. 
An Hamiltonian K-manifold $(M,\omega,\Phi)$ is Spin-prequantized if $M$ carries an equivariant $\spinc$ 
structure $P$ with determinant line bundle being a Kostant-Souriau line bundle over $(M,2\omega,2\Phi)$. 
Let $\Dcal_P$ be the $\spinc$ Dirac operator attached to $P$, where $M$ is oriented by its symplectic form. 
The $\spin$ quantization of $(M,\omega,\Phi)$ corresponds to the equivariant index of the elliptic 
operator $\Dcal_P$, and is denoted 
$$
\spinq^K(M)\ \in\ R(K).
$$

Let $\widehat{A}(M)(X)$ be the equivariant A-genus class: it is an equivariant analytic function from a neigborhood 
of $0\in\kgot$ with value in the algebra of differential forms on $M$. The Atiyah-Segal-Singer index 
theorem \cite{B-G-V} tell us that 
\begin{equation}\label{eq:ASS-index}
\spinq^K(M)(e^X):=\int_M e^{i(\omega+\langle\Phi,X\rangle)}\widehat{A}(M)(X)
\end{equation}
for $X\in\kgot$ small enough. It shows in particular that $\spinq^K(M)\in R(K)$ does not depend of 
the choice of the Spin-prequantum data.

This notion of Spin-quantization is closely related to the notion of {\em metaplectic correction}. Suppose that  
$(M,\omega,\Phi)$ carries a Kostant-Souriau line bundle $L_\omega$, and that the bundle of half-forms 
$\kappa_J^{1/2}$ associated to an invariant almost complex structure $J$ is well defined. 
In this case, $(M,\omega,\Phi)$ is Spin-prequantized 
by the $\spinc$-structure defined by $J$ and twisted by the line bundle $L_\omega\otimes\kappa_J^{1/2}$. The crucial
point here is that the corresponding Spin-quantization of $(M,\omega,\Phi)$ does not depend of the choice 
of the almost complex structure. Note that the existence of the bundle of half-form $\kappa_J^{1/2}$ is equivalent 
to the existence of a Spin structure on $M$ \cite{Lawson-Michel}.

The purpose of this article is to compute geometrically the multiplicities of $\spinq^K(M)\ \in\ R(K)$ in a way 
similar to the famous ``quantization commutes with reduction'' phenomenon of Guillemin-Sternberg 
\cite{Guillemin-Sternberg82,Meinrenken98,Meinrenken-Sjamaar,Tian-Zhang98,pep-RR,Vergne96,Jeffrey-Kirwan97,Vergne-Bourbaki,Sjamaar96}. 
This question was partially resolved 
in a previous paper \cite{pep-ENS} under the  condition that the infinitesimal stabilizers of the 
$K$-action are abelian. C. Teleman also obtained some results \cite{Teleman-Annals00}[Proposition 3.10] 
in the algebraic setting.

The striking difference with the standard Guillemin-Sternberg phenomenon is the {\em rho shift} that we 
explain  now. Let $T$ be a maximal torus of $K$ with Lie algebra $\tgot\subset\kgot$. 
Let $\tgot^*_+\subset\tgot^*$ be the closed Weyl chamber. We will 
look at $\tgot^*_+$ as a disjoint union of its open faces, the maximal one being its interior $(\tgot^*_+)^o$. 
Let $\rho\in (\tgot^*_+)^o$ be the half sum of the positive roots. At each open face $\tau$ of $\tgot^*_+$, 
we associate the term $\rho_\tau$ which is the half sum of the positive roots which are orthogonal to $\tau$. 
We note that $\rho-\rho_\tau\in \tau$ is to the orthogonal projection of $\rho$ on $\tau$. 

For any $\xi \in\tgot^*_+$ and any face $\tau$ containing $\xi$ in its closure, we consider 
the {\em shifted symplectic reduction} 
$$
M_\xi^\tau:=\Phi^{-1}(\xi+\rho-\rho_\tau)/K_\tau
$$
where $K_\tau$ is the common stabiliser of points in $\tau$. Note that $\xi+\rho-\rho_\tau\in \tau$ when $\xi\in\overline{\tau}$.

We are particularly interested to the  smallest face $\sigma$ of the Weyl chamber so that the 
Kirwan polytope $\p(M):=\Phi(M)\cap\tgot^*_+$ is contained in the closure of $\sigma$. It is not 
hard to see that the Spin-prequantum data on $(M,\omega,\Phi)$ descents to 
the shifted symplectic reduction $M_\mu^\sigma$ when $\mu$ is a dominant weight 
belonging to $\overline{\sigma}$. Then $\spinq(M_\mu^\sigma)\in\Z$ is naturally defined when 
$\mu+\rho-\rho_\sigma$ is a regular value of the moment map.  In general, the 
number $\spinq(M_\mu^\sigma)$ is defined by shift-desingularization (see Section \ref{subsec:spin-prequantization-reduction}).  

By definition $\spinq(M_\mu^\sigma)$ vanishes when $\mu+\rho-\rho_\sigma\notin\p(M)$, but in fact 
we can strengthen this vanishing property: $\spinq(M_\mu^\sigma)=0$ if $\mu+\rho-\rho_\sigma$ does not 
belong to the {\em relative interior} of the Kirwan polytope $\p(M)$.

Recall that the irreducible representation $V_\mu^K$ of $K$ are parametrized by their highest weight 
$\mu\in\what{K}\subset\tgot^*_+$.

The main result of this paper is the following

\begin{theo}\label{theo:QRspin-intro}
Let $(M,\omega,\Phi)$ be a compact Spin-prequantized Hamiltonian $K$-manifold. 
 Let $\sigma$ be the smallest face of the Weyl chamber so that 
 $\p(M)\subset\overline{\sigma}$. We have
 $$
 \Qcal_{\hbox{{\rm \tiny spin}}}^K(M)= \sum_{\mu\in\what{K}\cap\overline{\sigma}}
\Qcal_{\hbox{{\rm \tiny spin}}}(M_\mu^\sigma)\, V_\mu^K.
 $$
\end{theo}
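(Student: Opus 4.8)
The plan is to reduce the theorem to the "abelian infinitesimal stabilizer" case treated in \cite{pep-ENS} by a combination of two localization techniques: the functorial properties of transversally elliptic symbols and a "Spin"-version of the shifting trick. I would start by rephrasing the statement as an equality in $R^{-\infty}(K)$ after multiplying both sides by the Weyl denominator, i.e. I would aim to compute $\Qcal_{\hbox{\tiny spin}}^K(M)$ through its "$\rho$-shifted" analogue. The key geometric object is the map $\Phi$ composed with translation by $\rho-\rho_\sigma$: one shows first that the Spin-prequantum data descends to $M_\mu^\sigma$ for dominant $\mu\in\overline\sigma$, which is essentially the computation already sketched before the theorem, and that the resulting $\Qcal_{\hbox{\tiny spin}}(M_\mu^\sigma)\in\Z$ is deformation-invariant (independent of the almost complex structure used in the shift-desingularization), using the Atiyah--Segal--Singer formula \eqref{eq:ASS-index} as in the non-reduced case.

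The main technical step is a \emph{localization of the index on $\Phi^{-1}(\overline\sigma)$}. Following the strategy of \cite{pep-RR,pep-ENS}, I would deform the principal symbol of $\Dcal_P$ by the Kirwan vector field associated to $\Phi$ (suitably modified by the $\rho$-shift), obtaining a transversally elliptic symbol whose equivariant index equals $\Qcal_{\hbox{\tiny spin}}^K(M)$ but which is supported near $\Phi^{-1}(K\cdot(\overline\sigma))$. The contributions are then indexed by the critical set of $\|\Phi-\text{shift}\|^2$; the component through a point of $\Phi^{-1}(\sigma^o)$ where the stabilizer is exactly $K_\sigma$ produces, after pushing forward from $K_\sigma$ to $K$ via $\mathrm{Ind}^K_{K_\sigma}$, precisely the sum $\sum_\mu \Qcal_{\hbox{\tiny spin}}(M_\mu^\sigma)V_\mu^K$ — here the $\rho_\sigma$ appears because inducing the trivial/half-form line from $K_\sigma$ shifts highest weights by $\rho_\sigma$, which is exactly where the rho shift in the statement originates. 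The off-$\sigma$ critical contributions must be shown to cancel; for faces not containing $\sigma$ in their closure this is a vanishing coming from the fact that $\p(M)\subset\overline\sigma$, and for the remaining pieces one uses an inductive argument on $\dim M$ (or on the poset of faces) together with the multiplicative/induction behavior of $\spinq$ under the normal bundle decomposition.

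The actual reduction to abelian stabilizers happens on the slice $\Phi^{-1}(\sigma^o)$, where $K_\sigma$ acts with the torus $T$ (or rather $Z(K_\sigma)$) governing the relevant directions, so that \cite{pep-ENS} applies to the $K_\sigma$-manifold $\Phi^{-1}(\sigma^o)$ and yields $\Qcal_{\hbox{\tiny spin}}^{K_\sigma}$ of that piece as $\sum_\mu \Qcal_{\hbox{\tiny spin}}(M_\mu^\sigma)\chi_\mu$; applying $\mathrm{Ind}^K_{K_\sigma}$ and checking that this operation is compatible with the symbol localization finishes the computation. I expect the hard part to be the bookkeeping of the rho shift through the induction functor and the verification that the "error terms" (critical components away from $\sigma$, and the boundary behavior near lower-dimensional faces of $\overline\sigma$) genuinely cancel — this is where the strengthened vanishing "$\Qcal_{\hbox{\tiny spin}}(M_\mu^\sigma)=0$ unless $\mu+\rho-\rho_\sigma$ lies in the relative interior of $\p(M)$" must be invoked, and proving that vanishing (presumably via a further deformation argument pushing the symbol off the boundary) is the most delicate point.
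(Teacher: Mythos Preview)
Your broad architecture---Witten-type localization plus induction from $K_\sigma$ plus the torus case of \cite{pep-ENS}---is the right toolkit, but the paper organizes the argument quite differently and your proposal is missing the step that actually does the work. The paper does \emph{not} localize on $M$ directly. For each $\mu$ it applies the shifting trick, realizes $\mm_\mu$ as $[\spinq^K(M\times\overline{\Ocal_\mu})]^K$ with $\Ocal_\mu=K\cdot(\mu+\rho)$, and runs Witten's deformation on the product $N=M\times\overline{\Ocal_\mu}$. The whole point of the paper is then the analysis of which critical values $\beta\in\Bcal^\mu$ of $\|\Phi_N\|^2$ can contribute: a sharp inequality (Lemma~\ref{lem:ineq-4}) shows that for every $(m,\xi)\in\Cr(\|\Phi_N\|^2)$ one has $\|\beta\|^2\ge\tfrac12\tr_\beta|\kgot_{\Phi(m)}|$, with equality forcing $\beta\in K\cdot(-\rho_\tau)$ for some face $\tau\subset\overline\sigma$ with $\mu\in\overline\tau$. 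This is how $-\rho_\sigma$ (and only $-\rho_\sigma$) emerges. Your explanation ``inducing from $K_\sigma$ shifts highest weights by $\rho_\sigma$'' is the end of the story, not the reason the other $\beta$'s die.

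The genuine gap in your plan is the handling of faces $\tau\subsetneq\sigma$ with $\mu\in\overline\tau$. You propose ``an inductive argument on $\dim M$ or on the poset of faces together with the multiplicative behavior under the normal bundle decomposition,'' but the paper needs and proves something much more specific: for $\beta=-\rho_\tau$ with $\tau\neq\sigma$, one must rule out the borderline case $\tr_\beta|E_m|+\tr_\beta|\kgot_\tau/\kgot_m|=0$, and the paper does this by showing that equality would force $\rho_\tau-\rho_\sigma$ to act trivially on the principal slice $Y_\sigma$, hence to be constant on $\Delta(M)$, which contradicts $\Delta(M)$ spanning $\overline\sigma$. That argument is short but it is the crux, and nothing in an ``inductive/cancellation'' scheme produces it. Likewise, the strengthened vanishing of $\spinq(M_\mu^\sigma)$ at the boundary of $\Delta(M)$ is not used to kill off-$\sigma$ terms; it is a trivial consequence of the shift-desingularization once the theorem is already reduced to the torus computation on $Y_\sigma$. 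So: replace your direct localization on $M$ by the shifting trick on $M\times\overline{\Ocal_\mu}$, and replace the vague cancellation step by the explicit norm--trace inequality and its equality analysis; after that the induction and torus reduction go exactly as you describe.
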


\bigskip

Let us give some ideas about the proof. The representation $V_\mu^K$ 
is equal to the Spin-quantization of the coadjoint orbit $\Ocal_\mu:= K\cdot(\mu+\rho)$. 
Then the shifting trick tells us that the multiplicity $\mm_\mu$ of $V_\mu^K$ in $\spinq^K(M)$ is 
equal to 
$$
\left[\spinq^K(M)\otimes (V_\mu^K)^*\right]^K= 
\left[\spinq^K(M\times\overline{\Ocal}_\mu)\right]^K,
$$
where $\overline{\Ocal}_\mu$ is the coadjoint orbit with the opposite symplectic structure. 
As we did in \cite{pep-RR,pep-ENS}, we study the expression $\left[\spinq^K(M\times\overline{\Ocal}_\mu)\right]^K$
by localizing the Riemann-Roch character on the critical points of the square of the moment map 
$$
\Phi_\mu:M\times\overline{\Ocal}_\mu\to\kgot^*.
$$ 

Here our treatment differs depending on whether the Kirwan polytope $\p(M)$ intersects 
the interior of the Weyl chambers or not (i.e. $\sigma=\tgot^*_+$ or not).

When $\sigma=\tgot^*_+$, we show that the multiplicity $\mm_\mu$ is calculated using the Riemann-Roch character 
localized near the zero level set of the moment map $\Phi_\mu$. This case is (more or less) treated  in \cite{pep-ENS}.

The heart of this paper is when we work out the case $\sigma\neq\tgot^*_+$. We have $\Phi_\mu^{-1}(0)=\emptyset$, but 
we show how to compute $\mm_\mu$ using the Riemann-Roch character localized near 
$$
K\cdot \left(N^{\rho_\sigma}\cap \Phi_\mu^{-1}(-\rho_\sigma)\right).
$$
Here $N^{\rho_\sigma}$ denotes the submanifold of $N=M\times\Ocal_\mu$ where the infinitesimal action 
of $\rho_\sigma$ vanishes.

\bigskip

{\bf Notations.} Throughout the paper, $K$ will denote a compact connected Lie group, 
and $\kgot$ its Lie algebra. We let $T$ be a maximal torus in $K$, and $\tgot$ be its 
Lie algebra. The integral lattice $\wedge\subset\tgot$ is defined as the kernel of 
$\exp:\tgot\to T$, and the real weight lattice $\wedge^*\subset\tgot^*$ is defined by 
: $\wedge^*:=\hom(\wedge,2\pi\Z)$. Every $\mu\in\wedge^*$ defines a $1$-dimensional 
$ T$-representation, denoted $\C_\mu$, where $t=\exp(X)$ acts by $t^\mu:= e^{i\langle\mu, X\rangle}$. 
We fix a positive Weyl chamber $\tgot^*_+\subset\tgot^*$. For any dominant weight 
$\mu\in\what{K}:= \wedge^*\cap\tgot_+^*$, we denote by $V^K_\mu$ the irreducible 
representation with highest weight $\mu$. We denote $R(K)$ the representation ring of 
$K$. We denote $\Rfor(K):=\hom_\Z(R(K),\Z)$ its dual. An element $E\in\Rfor(K)$ can be represented 
as an infinite sum $E=\sum_{\mu\in\what{K}}\mm_\mu V^K_\mu$, with $\mm_\mu\in\Z$. The multiplicity $\mm_0$ 
of the trivial representation is denoted $[E]^K$. If $H$ is a closed subgroup of $K$, we have the induction map 
${\rm Ind}^{K}_{_H}: \Rfor(H)\to \Rfor(K)$ which is the dual of the restriction morphism 
$R(K)\to R(H)$. We see that $[{\rm Ind}^{K}_{_H}(E)]^K= [E]^H$.

\section{Spin-quantization of compact Hamiltonian $K$-manifolds}\label{sec.quantization.compact}

Let $M$ be a {\em compact} Hamiltonian $K$-manifold with symplectic form $\omega$ 
and moment map $\Phi: M\to \kgot^{*}$ characterized by the relation 
\begin{equation}\label{eq:hamiltonian-action}
    \iota(X_M)\omega= -d\langle\Phi,X\rangle,\quad X\in\kgot, 
\end{equation}
where $X_M(m):=\frac{d}{dt}\vert_{t=0} e^{-tX}\cdot m$ is the vector field on $M$ generated by $X\in \kgot$.

In the Kostant-Souriau framework \cite{Kostant70,Souriau}, a Hermitian line bundle $L_\omega$ with an invariant Hermitian connection
$\nabla$ is a prequantum line bundle over $(M,\omega,\Phi)$ if 
\begin{equation}\label{eq:kostant-L}
    \Lcal(X)-\nabla_{X_M}=i\langle\Phi,X\rangle\quad \mathrm{and} \quad
    \nabla^2= -i\omega,
\end{equation}
for every $X\in\kgot$. Here $\Lcal(X)$ is the infinitesimal action of $X\in\kgot$ on the sections 
of $L_\omega\to M$. $(L_\omega,\nabla)$ is also called a Kostant-Souriau line bundle. Remark that 
conditions (\ref{eq:kostant-L}) imply, via the equivariant
Bianchi formula, the relation (\ref{eq:hamiltonian-action}).

\subsection{Spin-quantization: definitions}\label{sec.prequantization.compact}

Let $J$ be any invariant almost complex structure on $M$, not necessarily compatible with the symplectic form 
$\omega$. Let 
$$
RR^{^K}_J(M,-)
$$
 be the corresponding Riemann-Roch character \cite{pep-RR}. 
We consider the complex tangent bundle $(\T M,J)$  and its complex dual 
$\T^{*}_{\C}M:=\hom_{\C}(\T M,\C)$. We consider the complex line bundle 
$$
\kappa_J:=\det\T^{*}_{\C}M.
$$

If $(M,\omega,\Phi)$ is prequantized by $L_\omega$, a standard 
procedure (called the metaplectic correction in the geometric quantization literature) is to tensor 
$L_\omega$ by the bundle of half-forms $\kappa_J^{1/2}$ \cite{Woodhouse}. 
We may consider the equivariant index 
\begin{equation}\label{eq:Q-J}
\Qcal_J^K(M):=\epsilon_J RR^{^K}_J(M,L_\omega\otimes\kappa_J^{1/2})
\end{equation}
where $\epsilon_J=\pm 1$ is the quotient of the orientations defined by $\omega$ and by $J$. In Proposition 
\ref{prop:spin-quantization} we check that  $\Qcal_J^K(M)$ has a meaning when the tensor product  $\tilde{L}=L_\omega\otimes\kappa_J^{1/2}$ is well defined  (even if neither $L_\omega$ nor $\kappa^{1/2}_J$ exist).

The almost complex structure $J$ defines a $\spinc$ structure $P_J$ on $M$ with determinant 
line bundle $\det P_J=\kappa_J^{-1}$. If we twist the $\spinc$ structure $P_J$ by any complex line bundle 
$\Lbb$ we get a $\spinc$ structure  $P_{J,\Lbb}$ with determinant line bundle 
$$
\det P_{J,\Lbb}=\Lbb^{2}\otimes\kappa_J^{-1}.
$$
See \cite{Lawson-Michel,pep-ENS}.

We make the following basic observation.

\begin{prop}\label{prop:spin-prequantized}
Let $(M,\omega,\Phi)$ be a Hamiltonian $K$-manifold. The following assertions 
are equivalent:
\begin{itemize}
\item[ a)] For any invariant complex structure $J$ there exists a $K$-equivariant line bundle $\tilde{L}$ 
    such that $\tilde{L}^{2}\otimes\kappa_J^{-1}$ is a prequantum
    line bundle over $(M,2\omega,2\Phi)$. 

\item[ b)] There exist an invariant complex structure $J$ and a $K$-equivariant line bundle $\tilde{L}$ 
    such that $\kappa_J^{-1}\otimes\tilde{L}^{2}$ is a prequantum
    line bundle over $(M,2\omega,2\Phi)$. 

\item[ c)] There exists an equivariant $\spinc$ structure $P$ such that its determinant line 
    bundle $\det P$ is a prequantum line bundle over $(M,2\omega,2\Phi)$.
\end{itemize}
\end{prop}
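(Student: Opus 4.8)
The plan is to prove the chain of implications $c) \Rightarrow a) \Rightarrow b) \Rightarrow c)$, since $a) \Rightarrow b)$ is immediate (specialize the ``for any $J$'' statement to a single choice of $J$). The conceptual content lies in moving between the $\spinc$-theoretic formulation in $c)$ and the ``twisted-$\kappa_J$'' formulation, and the bookkeeping tool throughout is the identity $\det P_{J,\Lbb} = \Lbb^2 \otimes \kappa_J^{-1}$ recalled just before the statement, together with the classical fact that any two equivariant $\spinc$ structures on $M$ differ by a (unique up to torsion) equivariant complex line bundle, i.e. if $P$ and $P'$ are $\spinc$ structures then $\det P' = \det P \otimes \ell^2$ for some equivariant line bundle $\ell$.

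\smallskip
For $c) \Rightarrow a)$: assume an equivariant $\spinc$ structure $P$ with $\det P$ prequantum over $(M,2\omega,2\Phi)$, and let $J$ be an arbitrary invariant almost complex structure, with associated $\spinc$ structure $P_J$ and $\det P_J = \kappa_J^{-1}$. Since $P$ and $P_J$ are both equivariant $\spinc$ structures on $M$, there is an equivariant line bundle $\tilde L$ with $P = P_{J,\tilde L}$, hence $\det P = \tilde L^2 \otimes \kappa_J^{-1}$. By hypothesis this is a prequantum line bundle over $(M,2\omega,2\Phi)$, which is exactly assertion $a)$. For $b) \Rightarrow c)$: given $J$ and $\tilde L$ with $\kappa_J^{-1}\otimes \tilde L^2$ prequantum over $(M,2\omega,2\Phi)$, simply set $P := P_{J,\tilde L}$; its determinant line bundle is $\tilde L^2 \otimes \kappa_J^{-1}$ by the recalled formula, which is prequantum by hypothesis, giving $c)$.

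\smallskip
The only point requiring genuine care --- and the main obstacle --- is the equivariant refinement of ``two $\spinc$ structures differ by a line bundle''. Non-equivariantly this is standard (it follows from the structure of $\spinc(n)$ as $(\spin(n)\times U(1))/\{\pm1\}$ and the associated exact sequence in Čech cohomology), but here we need the line bundle $\tilde L$ to carry a compatible $K$-action and, moreover, we need to know that the prequantum data (invariant connection satisfying the Kostant relations \eqref{eq:kostant-L}) transports correctly. For the latter: the Kostant conditions on a line bundle over $(M,2\omega,2\Phi)$ depend only on the isomorphism class of the line bundle as an equivariant line bundle together with the choice of invariant connection with the prescribed curvature $-2i\omega$ and moment $2i\langle\Phi,\cdot\rangle$; since $\det P$ and $\tilde L^2\otimes\kappa_J^{-1}$ are isomorphic as equivariant line bundles, such a connection on one transports to the other. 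The equivariance of $\tilde L$ itself follows because $P$, $P_J$ are $K$-equivariant $\spinc$ structures and the quotient $\det P \otimes (\det P_J)^{-1}$ has a canonical square root as a $K$-equivariant line bundle (one can build $\tilde L$ directly from the two principal $\spinc$ bundles as an associated bundle), as detailed in \cite{Lawson-Michel, pep-ENS}. I would state this equivariant fact as a lemma, cite those references for the construction, and then the three implications above are one-line verifications.
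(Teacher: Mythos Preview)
Your proof is correct and follows essentially the same route as the paper: the cycle $a)\Rightarrow b)\Rightarrow c)\Rightarrow a)$, with $b)\Rightarrow c)$ via $P:=P_{J,\tilde L}$ and $c)\Rightarrow a)$ via the fact that any two equivariant $\spinc$ structures differ by a line bundle. The only notable difference is in how the line bundle $\tilde L$ is produced in the step $c)\Rightarrow a)$: the paper constructs it explicitly as $\tilde L:=\hom_{\rm cl}(\Scal_J,\Scal_P)$ using that both spinor bundles are irreducible Clifford modules, which makes the $K$-equivariance of $\tilde L$ automatic and avoids the separate discussion you give in your third paragraph.
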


\medskip

When the previous assertions holds, we says that $(M,\omega,\Phi)$ is Spin-prequantized, either 
by the $\spinc$-structure $P$, or by the data $(J,\tilde{L})$. 
\medskip

\begin{prop}\label{prop:spin-quantization}
Let $(M,\omega,\Phi)$ be a $\spin$-prequantized Hamiltonian $K$-manifold. 
The equivariant index $\Qcal_J^K(M):=\epsilon_J RR^{^K}_J(M,\tilde{L})$ does not depend 
of the choice of the $\spin$-prequantum data $(J,\tilde{L})$. In fact $\Qcal_J^K(M)$ 
coincides with the equivariant index of the $\spinc$ Dirac operator $\Dcal_P$ attached to 
the $\spinc$-structure $P$.
\end{prop}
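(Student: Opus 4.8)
The plan is to reduce everything to the index of a single $\spinc$ Dirac operator and then to exploit deformation-invariance of the equivariant index. First I would record the dictionary between the two descriptions of the data: given $(J,\tilde L)$ with $\tilde L^{2}\otimes\kappa_J^{-1}$ prequantum for $(M,2\omega,2\Phi)$, the almost complex structure $J$ determines a $\spinc$ structure $P_J$ with $\det P_J=\kappa_J^{-1}$, and twisting by $\tilde L$ produces a $\spinc$ structure $P=P_{J,\tilde L}$ with $\det P=\tilde L^{2}\otimes\kappa_J^{-1}$, which by hypothesis is a prequantum line bundle over $(M,2\omega,2\Phi)$. Thus $(J,\tilde L)$ does give rise to an equivariant $\spinc$ structure $P$ as in part c) of Proposition \ref{prop:spin-prequantized}. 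The core identity to establish is
$$
\epsilon_J\, RR^{^K}_J(M,\tilde L)=\hbox{index}(\Dcal_{P_{J,\tilde L}}),
$$
i.e. that the (orientation-corrected) Riemann-Roch character attached to $(J,\tilde L)$ equals the equivariant index of the $\spinc$ Dirac operator attached to $P_{J,\tilde L}$. This is essentially the statement that the Dolbeault-type operator $\overline\partial+\overline\partial^{*}$ twisted by $\tilde L$ on $(M,J)$ \emph{is} (up to homotopy of symbols, and up to the sign $\epsilon_J$ coming from comparing the orientation defined by $\omega$ with that defined by $J$) the $\spinc$ Dirac operator for $P_{J,\tilde L}$; at the level of principal symbols both are given by Clifford multiplication on $\Lambda^{\bullet}\T^{*}_{\C}M\otimes\tilde L$, so they define the same class in equivariant K-theory of $TM$ and hence the same index. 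I would cite \cite{Lawson-Michel,B-G-V,pep-RR} for this comparison.

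Next, to prove independence of the choice, it suffices by the above to show that the $\spinc$ structure $P$ in Proposition \ref{prop:spin-prequantized} c) determines the index unambiguously, and that any two choices $(J_0,\tilde L_0)$ and $(J_1,\tilde L_1)$ of Spin-prequantum data produce $\spinc$ structures with the same equivariant index. For the first point: if $P$ and $P'$ are two equivariant $\spinc$ structures on $M$ with the \emph{same} determinant line bundle $\det P\cong\det P'$, then $P$ and $P'$ differ by a flat (in fact trivial, on a connected $M$ with this constraint, after tensoring by a square root of a trivial bundle) line bundle, so $\Dcal_P$ and $\Dcal_{P'}$ have the same symbol class and the same index; more robustly, the space of $\spinc$ structures with prescribed determinant line bundle over the fixed $(M,2\omega,2\Phi)$ is connected, and the equivariant index is locally constant along continuous families of elliptic symbols (rigidity of the index), so it is constant on that space. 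For the second point: the space of invariant almost complex structures on $M$ is connected (it is the space of sections of a bundle with contractible fibers — invariant complex structures on a symplectic vector space compatible with a varying metric — or more simply one interpolates $J_0$ and $J_1$ through arbitrary invariant almost complex structures), and along such a path the determinant line bundle $\kappa_{J_t}^{-1}$ varies continuously while the prequantization constraint forces $\tilde L_t^{2}=\kappa_{J_t}\otimes(\det P)$ to track it; one then gets a continuous family of $\spinc$ Dirac operators $\Dcal_{P_{J_t,\tilde L_t}}$ whose equivariant index is therefore independent of $t$. The sign $\epsilon_{J_t}$ is locally constant in $t$ as well, and a small computation (both orientations agree when $J_t$ is $\omega$-compatible) shows the signs match up consistently with the identification of symbol classes, so no sign discrepancy is introduced.

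The main obstacle I anticipate is the bookkeeping around the orientation sign $\epsilon_J$ and the precise identification of the twisted Dolbeault symbol with the $\spinc$ Dirac symbol: one must be careful that "$L_\omega\otimes\kappa_J^{1/2}$" and "$\tilde L$" refer to the same object even when the individual factors do not exist, which is exactly the content flagged after \eqref{eq:Q-J}, and that the spinor bundle $\mathcal S_P=\Lambda^{\bullet}\T^{*}_{\C}M\otimes\tilde L\otimes\kappa_J^{1/2,\,-1}\otimes\cdots$ is normalized so that its square-root/determinant conventions are consistent with $\det P=\tilde L^{2}\otimes\kappa_J^{-1}$. A clean way to sidestep delicate sign chases is to verify the equality \eqref{eq:Q-J} $=\hbox{index}(\Dcal_P)$ first in the special case where $J$ is $\omega$-compatible and $L_\omega$, $\kappa_J^{1/2}$ genuinely exist — where it is classical — and then transport to the general case by the connectedness/rigidity argument above, since both sides are deformation-invariant. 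I would also invoke the Atiyah-Segal-Singer formula \eqref{eq:ASS-index}, already recorded in the introduction, as an independent confirmation that the resulting class in $R(K)$ is intrinsic.
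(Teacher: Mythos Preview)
Your core step --- identifying $\epsilon_J\,RR^{^K}_J(M,\tilde L)$ with the index of $\Dcal_{P_{J,\tilde L}}$ because the two operators share the same principal symbol --- is exactly the paper's argument. The paper, however, disposes of your anticipated ``bookkeeping around the orientation sign'' in one line rather than by deformation: since $\Scal_P$ and $\Scal_J$ are both irreducible Clifford modules, one has $\Scal_P\simeq\Scal_J\otimes\tilde L$ with $\tilde L=\hom_{\rm cl}(\Scal_J,\Scal_P)$, and comparing the $\Z/2$-gradings coming from the symplectic versus the complex orientation gives $\Scal_P^{\pm}=\Scal_J^{\pm\epsilon_J}\otimes\tilde L$. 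Thus the twisted Dolbeault operator literally \emph{is} an operator $\Gamma(\Scal_P^+)\to\Gamma(\Scal_P^-)$ with the same principal symbol as $\Dcal_P$; no homotopy of $J$ is needed, and the sign $\epsilon_J$ is explained rather than chased.

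Your independence argument has a genuine gap. The set of equivariant $\spinc$ structures with a prescribed determinant line bundle is \emph{not} connected: two such structures differ by a line bundle $L$ with $L^2$ trivial, i.e.\ by an element of the $2$-torsion of $H^2(M;\Z)$, which is discrete and in general nonzero. So neither ``$P$ and $P'$ differ by a trivial line bundle'' nor ``the space of such $\spinc$ structures is connected'' is correct, and rigidity of the index along a continuous family does not apply. The same obstruction bites your path argument in $(J_t,\tilde L_t)$: transporting $\tilde L_0$ along a path of $J_t$'s need not land on the given $\tilde L_1$, only on something differing from it by a $2$-torsion bundle. The paper circumvents all of this: once $\Qcal_J^K(M)=\indice^K(\Dcal_P)$ is established, independence from the choice of prequantum $P$ is read off from the equivariant index formula \eqref{eq:ASS-index}, which involves only $e^{i(\omega+\langle\Phi,X\rangle)}$ and $\widehat A(M)(X)$ and hence only $(\omega,\Phi)$. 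You already mention \eqref{eq:ASS-index} as a confirmation; promote it to the actual argument and drop the connectedness claims.
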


\medskip

\begin{defi}\label{def:Q-spin}
Let $(M,\omega,\Phi)$ be a $\spin$-prequantized Hamiltonian $K$-manifold. 
The $\spin$-quantization of $(M,\omega,\Phi)$ is defined as the equivariant index $\Qcal_J^K(M)$, and 
is denoted 
$$
\Qcal_{\hbox{{\rm \tiny spin}}}^K(M)\in R(K).
$$
\end{defi}

\medskip

{\em Proof of Propositions \ref{prop:spin-prequantized} and \ref{prop:spin-quantization}. } 
We have obviously $a)\Longrightarrow b)$, and we get $b)\Longrightarrow c)$ by taking 
the $\spinc$ sructure $P_{J,\tilde{L}}$. Let us prove $c)\Longrightarrow a)$. 

Let $P$ be a $\spinc$-structure on $M$ such that its determinant line bundle $\det P$ 
is a prequantum line bundle over $(M,2\omega)$. Let $\Scal_P$ be the corresponding bundle of spinors. 
 Let $P_J$ and $\Scal_J$ be respectively the 
associated $\spinc$-structure and the bundle of spinors on $M$ associated to an invariant almost complex structure 
$J$ on $M$. Since $\Scal_P, \Scal_J$ are irreducible clifford modules, we have 
\begin{equation}\label{eq:Scal}
\Scal_P\simeq S_J\otimes \tilde{L}
\end{equation}
where $\tilde{L}$ is the line bundle defined by $\tilde{L}:=\hom_{\rm cl}(\Scal_J, S_P)$. From 
(\ref{eq:Scal}) we get that the line bundle
\begin{eqnarray*}
\det P &= & \tilde{L}^2\otimes\det P_J\\
&=& \tilde{L}^2\otimes \kappa_J^{-1}.
\end{eqnarray*}
is a prequantum line bundle over $(M,2\omega,2\Phi)$.

Let $P$ be the $\spinc$ structure attached to a data $(J,\tilde{L})$. The symplectic orientation 
on $M$ defines a decomposition on the bundle of spinors, $\Scal_P=\Scal_P^+\oplus\Scal_P^-$, and 
the corresponding $\spinc$ Dirac operator $\Dcal_P$ maps $\Gamma(\Scal_P^+)$ to $\Gamma(\Scal_P^-)$.

On the other hand the almost complex structure on $M$ gives the decomposition $\wedge\T^*M\otimes\C=
\oplus_{i,j}\wedge^{i,j}\T^*M$ of the bundle of differential form. The corresponding bundle of 
spinors is $\Scal_J:=\wedge^{0, \bullet}\,\T^*M$ and the complex orientation induces the splitting 
$\Scal_J=\Scal_J^+\oplus\Scal_J^-$ with $\Scal_J^+:=\wedge^{0,even}\T^*M$. The Dolbeault Dirac operator 
$\overline{\partial}_{\tilde{L}}+\overline{\partial}_{\tilde{L}}^*$ maps 
$\Gamma(\Scal_J^\pm\otimes \tilde{L})$ to $\Gamma(\Scal_J^\mp\otimes\tilde{L})$, and the Riemann-Roch 
character $RR^K_J(M,\tilde{L})$ is defined as the equivariant index of the elliptic operator
$$
\overline{\partial}_{\tilde{L}}+\overline{\partial}_{\tilde{L}}^*:
\Gamma(\Scal_J^+\otimes \tilde{L})\longrightarrow \Gamma(\Scal_J^-\otimes\tilde{L})
$$

If $\epsilon_J=\pm 1$ is the quotient of the orientations defined by $\omega$ and by $J$, one has 
that 
$$
\Scal_P^\pm = \Scal_J^{\pm \epsilon_J}\otimes\tilde{L}.
$$
Hence $\Qcal^K_J(M)=\epsilon_J RR^K_J(M,\tilde{L})$ is defined as the equivariant index of the 
Dolbeault Dirac operator $\overline{\partial}_{\tilde{L}}+\overline{\partial}_{\tilde{L}}^*$ 
viewed as an elliptic operator $\Dcal^+_{\tilde{L}}$ from $\Gamma(\Scal_P^+)$ to $\Gamma(\Scal_P^-)$.

Finally we know that $\indice^K(\Dcal_P)=\indice^K(\Dcal^+_{\tilde{L}})$ since the first order elliptic 
operators $\Dcal_P$ and  $\Dcal^+_{\tilde{L}}$ have the same principal symbol \cite{Duistermaat96}. 
$\square$

\bigskip

In the remaining part of this paper, we find convenient to work with the following

\begin{defi}
A Hamiltonian $K$-manifold $(M,\omega,\Phi)$ is Spin-prequantized by $\tilde{L}$ if there exists 
an invariant almost complex structure $J$ {\em compatible} with $\omega$ such that 
$\tilde{L}^2\otimes\kappa_J^{-1}$ is a Kostant-Souriau line bundle over $(M,2\omega,2\Phi)$.
\end{defi}

We remark that $\esp_J=1$ when $J$ is {\em compatible} with $\omega$. Moreover, the Riemann-Roch character 
$RR^K_J(M,-)$ does not depend \cite{pep-RR} on the choice of the {\em compatible} invariant almost complex structure $J$ : we denote 
it simply by  $RR^K(M,-)$.

Finally, when a Hamiltonian manifold $(M,\omega,\Phi)$ is Spin-prequantized by the line bundle $\tilde{L}$, its  Spin-quantization is 
defined by 
$$
\spinq^K(M):=RR^K(M,\tilde{L}).
$$

\subsection{Functorial properties}

We summarize the functorial properties of $\spinq$ in the next 

\begin{prop}\label{prop-Qspin-functorial}
$\bullet$ If $(M,\omega,\Phi)$ is a Spin-prequantized Hamiltonian $K$-manifold, and $H$ is a closed subgroup 
of $K$ then the restriction of $\Qcal_{\hbox{{\rm \tiny spin}}}^K(M)$ to $H$ is equal to 
$\Qcal_{\hbox{{\rm \tiny spin}}}^H(M)$.

$\bullet$ If $(M_j,\omega_j,\Phi_j)$ are Spin-prequantized  Hamiltonian $K_j$-manifold, for $j=1,2$, then 
$M_1\times M_2$ is a Spin-prequantized  Hamiltonian $K_1\times K_2$-manifold and 
$$
 \Qcal_{\hbox{{\rm \tiny spin}}}^{K_1\times K_2}(M_1\times M_2)=\Qcal_{\hbox{{\rm \tiny spin}}}^{K_1}(M_1)\otimes 
\Qcal_{\hbox{{\rm \tiny spin}}}^{K_2}(M_2)
$$
in  $R(K_1\times K_2)\simeq R(K_1)\otimes R(K_2)$.

$\bullet$ If $(M,\omega_M,\Phi_M)$ and $(N,\omega_N,\Phi_N)$ are Spin-prequantized  Hamiltonian $K$-manifold, then 
$M\times N$ is a Spin-prequantized  Hamiltonian $K$-manifold and 
$$
\Qcal_{\hbox{{\rm \tiny spin}}}^{K}(M\times N)=\Qcal_{\hbox{{\rm \tiny spin}}}^{K}(M)\cdot 
\Qcal_{\hbox{{\rm \tiny spin}}}^{K}(N),
$$
where $\cdot$ denotes the product in  $R(K)$.

$\bullet$ A Spin-prequantization on $(M,\omega,\Phi)$ induces a Spin-prequantization on 
$\overline{M}:=(M,-\omega,-\Phi)$. The Spin-quantization of $\overline{M}$ corresponds to the 
dual of the Spin-quantization of $M$:
$$
\Qcal_{\hbox{{\rm \tiny spin}}}^{K}(\overline{M})=\left[\Qcal_{\hbox{{\rm \tiny spin}}}^{K}(M)\right]^*.
$$
\end{prop}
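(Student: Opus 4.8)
The plan is to reduce each of the four assertions to the corresponding well-known functorial property of the Riemann-Roch character $RR^K(M,-)$ (equivalently, of the equivariant index of the $\spinc$ Dirac operator), using the characterization of $\spinq$ given in Definition \ref{def:Q-spin} and Proposition \ref{prop:spin-quantization}. The only genuine content is, in each case, to check that the chosen prequantum data on the source manifold(s) induces prequantum data on the target manifold, and that under this identification the twisting bundle $\tilde L$ behaves multiplicatively; once this is done, the statement about quantizations is immediate from standard index theory.

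First, for the restriction property, I would observe that if $(M,\omega,\Phi)$ is Spin-prequantized by $P$ (equivalently by $(J,\tilde L)$ with $J$ compatible), then $P$ is in particular an $H$-equivariant $\spinc$-structure whose determinant line bundle is an $H$-equivariant Kostant-Souriau bundle over $(M,2\omega,2\Phi\vert_\hgot)$, so $(M,\omega,\Phi\vert_\hgot)$ is Spin-prequantized. Since the equivariant index of $\Dcal_P$ computed over $K$ restricts to the equivariant index computed over $H$ (functoriality of the analytic index under restriction of the group action), we get $\spinq^K(M)\vert_H=\spinq^H(M)$. Second, for the product $M_1\times M_2$ with groups $K_1\times K_2$: if $P_j$ Spin-prequantizes $(M_j,\omega_j,\Phi_j)$, then the external product $\spinc$-structure on $M_1\times M_2$ (whose spinor bundle is $\Scal_{P_1}\boxtimes\Scal_{P_2}$ and whose determinant is $\det P_1\boxtimes\det P_2$) has determinant line bundle equal to the Kostant-Souriau bundle over $(M_1\times M_2, 2(\omega_1\oplus\omega_2), 2(\Phi_1\oplus\Phi_2))$, so the product is Spin-prequantized; the Dirac operator on a product is, up to sign conventions absorbed in the symplectic orientation, the external tensor product of the two Dirac operators, whence the multiplicativity of the index in $R(K_1)\otimes R(K_2)$. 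The third bullet follows from the second by restricting along the diagonal $K\hookrightarrow K\times K$ and using the first bullet, since $(M\times N,\omega_M\oplus\omega_N,\Phi_M\oplus\Phi_N)$ as a $K$-manifold is the pullback of the $K\times K$-manifold $M\times N$.

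For the last bullet, I would take a compatible $J$ on $M$ with $\tilde L^2\otimes\kappa_J^{-1}$ a Kostant-Souriau bundle over $(M,2\omega,2\Phi)$, and note that $-J$ is compatible with $-\omega$, that $\kappa_{-J}=\overline{\kappa_J}\cong\kappa_J^{-1}$ as $K$-equivariant line bundles, and that $\overline{\tilde L}$ satisfies $\overline{\tilde L}^2\otimes\kappa_{-J}^{-1}\cong (\tilde L^2\otimes\kappa_J^{-1})^{-1}$, which is precisely a Kostant-Souriau bundle over $(M,-2\omega,-2\Phi)$; hence $\overline M$ is Spin-prequantized by $\overline{\tilde L}$. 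The passage from $J$ on $M$ to $-J$ on $\overline M$ reverses the orientation and interchanges $\Scal_J^+$ with $\Scal_J^-$, so the Dolbeault-type operator for $\overline M$ is the (formal) adjoint of that for $M$ twisted by the conjugate bundle; by the standard relation between the index of an elliptic operator and that of its adjoint, and since the $K$-representation on the kernel/cokernel gets conjugated, one obtains $\spinq^K(\overline M)=\overline{\spinq^K(M)}=[\spinq^K(M)]^*$ in $R(K)$.

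The main obstacle I anticipate is purely bookkeeping rather than conceptual: keeping track of the orientation signs $\esp_J$ throughout. Because Definition \ref{def:Q-spin} and the running convention after it restrict to $J$ \emph{compatible} with $\omega$ (so that $\esp_J=1$), the external product of two compatible structures is again compatible, and $-J$ is compatible with $-\omega$, so in fact all the $\esp$-factors are $+1$ and no sign discrepancies arise. One should, however, verify carefully that the identification $\Scal_P^\pm=\Scal_J^{\pm}\otimes\tilde L$ from the proof of Proposition \ref{prop:spin-quantization} is compatible with external products and with conjugation, which is where the (elementary) Clifford-module computations live. Everything else is a direct appeal to the functoriality of the equivariant analytic index established in \cite{B-G-V} and \cite{pep-RR}.
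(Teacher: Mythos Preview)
Your treatment of the first three bullets matches the paper's: both dispatch these as ``direct consequences of the functorial properties of the index map,'' with you spelling out slightly more of the verification that the prequantum data transfers correctly.

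For the fourth bullet, your argument is correct but takes a genuinely different route from the paper. You argue operator-theoretically: identify the Spin-prequantum data $(\overline{\tilde L},-J)$ on $\overline M$, observe that passing from $J$ to $-J$ conjugates the spinor bundle and swaps $\Scal_J^\pm$, and conclude via the relation between the index of a Dolbeault-type operator and that of its adjoint/conjugate. The paper instead invokes the Atiyah--Segal--Singer integral formula (\ref{eq:ASS-index}): since $\widehat A(M)(X)$ has real coefficients, replacing $(\omega,\Phi)$ by $(-\omega,-\Phi)$ in the integrand $e^{i(\omega+\langle\Phi,X\rangle)}\widehat A(M)(X)$ simply conjugates the integral, whence $\spinq^K(\overline M)(e^X)=\overline{\spinq^K(M)(e^X)}$ for small $X$, and analytic continuation gives the character identity on all of $K$. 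The paper's approach is shorter and sidesteps the Clifford-module bookkeeping you flagged as the ``main obstacle''; your approach has the virtue of being self-contained at the level of operators and not relying on the cohomological index formula, but does require the careful tracking of $\Scal_{-J}^\pm\cong\overline{\Scal_J^\mp}$ that you allude to only in outline.
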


\medskip

\begin{proof} The first three points are direct consequences of the functorial properties of the 
index map. Let us prove the last point. One see that if $(\tilde{L},J)$ is a Spin-prequantum data 
for $M$ then $(\tilde{L}^{-1},-J)$ is a Spin-prequantum data for $\overline{M}$. Then 
we have for $X\in\kgot$ small enough
\begin{eqnarray*}
\spinq^K(\overline{M})(e^X)&=& \int_M e^{i(-\omega-\langle\Phi,X\rangle)}\widehat{A}(M)(X)\\
&=&\overline{ \int_M e^{i(\omega+\langle\Phi,X\rangle)}\widehat{A}(M)(X)}\qquad [1]\\
&=& \overline{\spinq^K(M)(e^X)}.\qquad [2]
\end{eqnarray*}
The relation $[1]$ is due to the fact that the differential form $\widehat{A}(M)(X)$ has real coefficients. 
Since $X\to \spinq^K(M)(e^X)$ are analytic functions, the identity $[2]$ shows that 
$\spinq^K(\overline{M})(k)=\overline{\spinq^K(M)(k)}$ for any $k\in K$. In other words the (virtual) 
representation $\spinq^K(\overline{M})$ corresponds to the dual of the  (virtual) 
representation $\spinq^K(M)$.
\end{proof}

\subsection{Spin-quantization of coadjoint orbits.}

Let $\mu\in\what{K}$ be a dominant weight. Let $\sigma$ be a face of the 
Weyl chamber such that $\mu\in\overline{\sigma}$: hence the stabilizer subgroup $K_\mu$ contains 
$K_\sigma$. We will restrict the one-dimensional representation $\C_\mu$ of $K_\mu$ to the subgroup 
$K_\sigma$.
 
Let $\rho$ be half the sum of the positive roots, and let $\rho_\sigma$ be half the 
sum of the positive roots which are orthogonal to $\sigma$. Note that $\rho-\rho_\sigma$ belongs to 
$\sigma$, hence $\mu+\rho-\rho_\sigma$ belongs also to $\sigma$ for any $\mu\in\overline{\sigma}$. 
The coadjoint orbit 
$$
\Ocal_\mu^\sigma:=K\cdot(\mu+\rho-\rho_\sigma)
$$
is $\spin$-prequantized by the compatible complex structure and the line bundle
$\tilde{L}= K\times_{K_\sigma}\C_{\mu}$. We have 
\begin{eqnarray*}
\spinq^K(\Ocal_\mu^\sigma)&=& RR^K(K/K_\sigma,K\times_{K_\sigma}\C_{\mu})\\
&=&  V^K_\mu.
\end{eqnarray*}
Thanks to Proposition \ref{prop-Qspin-functorial}, we know that 
$\spinq^K\left(\overline{\Ocal_\mu^\sigma}\right)= \left(V^K_\mu\right)^*$, 
where $\overline{\Ocal_\mu^\sigma}$ be the coadjoint orbit $\Ocal_\mu^\sigma$ with the 
opposite symplectic form. 

We have seen that the same irreducible representations $V_\mu^K$ can be realized as the 
Spin-quantization of the coadjoint orbits $\Ocal_\mu^\sigma$ where $\sigma$ is a face of the 
Weyl chamber containing $\mu$ in its closure.

\subsection{Spin-prequantization commutes with reduction}\label{subsec:spin-prequantization-reduction}

We consider first the case of a Hamiltonian $H$-manifold $(N,\omega,\Phi)$, not necessarily compact, which is 
Spin-prequantized by $\tilde{L}$. We suppose that $0$ is a regular value of $\Phi$. Let $N_0:=\Phi^{-1}(0)/H$ be the 
orbifold reduced space with its canonical symplectic structure $\omega_0$.

\begin{lem} \label{lem:restriction-spin-0}
The orbifold line bundle $\tilde{\Lcal}_0:=(\tilde{L}\vert_{\Phi^{-1}(0)})/H$ Spin-prequantizes $(N_0,\omega_0)$.
\end{lem}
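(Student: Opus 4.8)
The statement is the standard ``prequantization commutes with reduction'' lemma, transported to the $\spinc$/metaplectic setting, so the plan is to track the $\spinc$-structure (equivalently, the data $(J,\tilde L)$) through the reduction procedure and check that the defining property --- the square of $\tilde{\Lcal}_0$ tensored with $\kappa^{-1}$ of the reduced space is Kostant--Souriau over $(N_0,2\omega_0)$ --- survives. First I would choose an invariant $\omega$-compatible almost complex structure $J$ on $N$ so that $\tilde L^2\otimes\kappa_J^{-1}=:L$ is a genuine Kostant--Souriau line bundle over $(N,2\omega,2\Phi)$. Along $Z:=\Phi^{-1}(0)$, the vanishing of $\Phi$ together with the Kostant relation $\Lcal(X)-\nabla_{X_N}=i\langle\Phi,X\rangle$ shows that the $H$-action on $L\vert_Z$ is infinitesimally trivial in the vertical direction, i.e.\ $\nabla$ restricted to the $H$-orbit directions is flat along the fibers; combined with a standard averaging argument over $H$ (connected, compact) this produces a descent of $L\vert_Z$ to an orbifold line bundle $\Lcal_0$ on $N_0$, and of the connection $\nabla\vert_Z$ to a connection $\nabla_0$ on $\Lcal_0$.

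The next step is to identify the curvature of the descended connection. Since $2\omega$ is the curvature of $\nabla$ and $N_0=Z/H$ carries $\omega_0$ characterized by $\pi^*\omega_0=\iota^*\omega$ (with $\iota\colon Z\hookrightarrow N$ the inclusion and $\pi\colon Z\to N_0$ the quotient map), the curvature of $\nabla_0$ is $2\omega_0$; hence $\Lcal_0$ is a Kostant--Souriau line bundle over $(N_0,2\omega_0)$ (there is no moment map condition left since the group has been quotiented out). It then remains to match $\Lcal_0$ with the metaplectic data on $N_0$: the compatible almost complex structure $J$ on $N$ induces a compatible almost complex structure $J_0$ on $(N_0,\omega_0)$ via the identification $\T_{\pi(z)}N_0\simeq (\T_z Z\cap J\T_z Z)/(\text{tangent to }H\text{-orbit through }z)$, and one checks that $\iota^*\kappa_J$ restricted to $Z$ descends, modulo the canonical bundle of the $H$-orbit directions, to $\kappa_{J_0}$. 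Precisely, the normal bundle to $Z$ in $N$ is $H$-equivariantly isomorphic (using $d\Phi$) to the trivial bundle $Z\times\hgot^*$, and the orbit directions give $Z\times\hgot$; pairing these, the determinant lines cancel, yielding $\iota^*\kappa_J\vert_Z\simeq \pi^*\kappa_{J_0}\otimes(\text{trivial})$ as $H$-equivariant bundles, so $\kappa_J^{-1}$ descends to $\kappa_{J_0}^{-1}$.

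Putting these identifications together: $\tilde{\Lcal}_0^2\otimes\kappa_{J_0}^{-1}$ is the descent of $\tilde L^2\otimes\kappa_J^{-1}=L$, which is Kostant--Souriau over $(N_0,2\omega_0)$ with curvature $2\omega_0$, and $J_0$ is $\omega_0$-compatible; this is exactly the condition that $\tilde{\Lcal}_0$ Spin-prequantizes $(N_0,\omega_0)$ in the sense of the definition preceding the lemma. I expect the main obstacle to be the careful bookkeeping of the determinant-line cancellation $\kappa_J^{-1}\rightsquigarrow\kappa_{J_0}^{-1}$ in the orbifold setting --- in particular verifying that the $H$-equivariant trivialization of $\Det(\hgot)\otimes\Det(\hgot^*)$ along $Z$ is compatible with the chosen metaplectic structures and does not introduce a sign or a nontrivial equivariant twist; the descent-of-connection and curvature computations are routine once the averaging over the compact connected group $H$ is set up, and the regularity of $0$ guarantees the quotient is a symplectic orbifold so that all the bundles in sight are genuine orbifold bundles.
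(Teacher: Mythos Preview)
Your proposal is correct and follows essentially the same route as the paper: choose a compatible $J$, write $\tilde L^2\otimes\kappa_J^{-1}$ as a Kostant--Souriau bundle $L_{2\omega}$, descend $L_{2\omega}$ and $\kappa_J$ separately to $N_0$, and reassemble. The only notable difference is in the bookkeeping you flag as the ``main obstacle'': rather than pairing the orbit directions $\hgot$ with the normal bundle $\hgot^*$ (a real splitting, which then requires you to argue that the complex determinant is unaffected), the paper takes the $J$-invariant complement $\hgot_\Zcal\oplus J(\hgot_\Zcal)\subset \T N\vert_\Zcal$ directly, so that $\T N\vert_\Zcal\simeq\pi^*\T N_0\oplus(\hgot_\Zcal\oplus J\hgot_\Zcal)$ is already a \emph{complex} decomposition and $(\kappa_J\vert_\Zcal)/H=\kappa_{J_0}$ is immediate with no sign or twist to chase.
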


\begin{proof} The fiber $\Zcal=\Phi^{-1}(0)$ is a smooth $H$-invariant submanifold of $N$. Let $\pi: \Zcal\to \Zcal/H= N_0$ be the projection.  
Recall that the symplectic structure $\omega_0$ on $N_0$ is defined by the relation $\pi^*(\omega_0)=\omega\vert_\Zcal$. 
Let $L_{2\omega}$ the Kostant-Souriau line bundle on $(N,2\omega,2\Phi)$ such that 
\begin{equation}\label{eq:lem-restriction}
\tilde{L}^2=L_{2\omega}\otimes \kappa_J.
\end{equation}
Here $J$ is a compatible invariant almost complex structure on $N$. We have $\T M\vert_\Zcal=\T \Zcal\oplus J(\hgot_\Zcal)$ where 
$\hgot_\Zcal\subset\T \Zcal$ is the trivial bundle given by the infinitesimal action of $H$. Since 
$\T \Zcal\simeq \pi^*(\T N_0)\oplus \hgot_\Zcal$ 
we get 
$$
\T M\vert_\Zcal\simeq \pi^*(\T N_0)\oplus \hgot_\Zcal\oplus J(\hgot_\Zcal).
$$
Hence $J$ induces a compatible almost complex structure $J_0$ on $(N_0,\omega_0)$, such that  
$(\kappa_J\vert_\Zcal)/H=\kappa_{J_0}$. 

The line bundle $L_{2\omega_0}= (L_{2\omega}\vert_{\Zcal})/H$ is a prequantum line bundle on $(N_0,\omega_0)$.
Finally, if we restrict (\ref{eq:lem-restriction}) to $\Zcal$, we get  
$$
\tilde{\Lcal}_0^2= L_{2\omega_0}\otimes\kappa_{J_0}.
$$
after taking the quotient by $H$. We have proved that $(J_0,\tilde{\Lcal}_0)$ Spin-prequantizes $(N_0,\omega_0)$.
\end{proof}

\medskip

For the rest of this section we consider a compact Hamiltonian $K$-manifold $(M,\omega,\Phi)$, that we suppose Spin-prequantized by 
the line bundle $\tilde{L}$.

 Let $\tau$ be a face of the Weyl chamber, and let $K_\tau$ be the commun stabilizer of points in $\tau$.
 Following Guillemin-Sternberg \cite{Guillemin-Sternberg84}, we introduce the following 
 $K_\tau$-invariant open subset of $\kgot^*_\tau$:
 $$
 U_\tau= K_\tau\cdot\{\xi\in\tgot^*_+\vert K_\xi\subset K_\tau\}= K_\tau\cdot \bigcup_{\tau\subset\overline{\sigma}}\sigma.
 $$
By construction, $U_\tau$ is a slice for the coadjoint action: this mean that the map $K\times U_\tau, (k,\xi)\mapsto k\cdot\xi$ 
factors through an inclusion $K\times_{K_\tau} U_\tau\croc \kgot^*$.

The symplectic cross-section theorem \cite{Guillemin-Sternberg84} asserts that the pre-image $Y_\tau=\Phi^{-1}(U_\tau)$ 
is a symplectic submanifold : we denote $\omega_\tau$ the restriction of $\omega$ to $Y_\tau$. The action 
of $K_\tau$ on $(Y_\tau,\omega_\tau)$ is Hamiltonian, where the restriction of $\Phi$ to $Y_\tau$ is a moment map. 
Since $\rho-\rho_\tau$ is a $K_\tau$-invariant element, we can use the translated moment map 
$\Phi_\tau: Y_\tau\to \kgot_\tau^*$ defined by 
$$
\Phi_\tau=\Phi\vert_{Y_\tau} - (\rho-\rho_\tau).
$$

\begin{lem}\label{lem:restriction-spin-slice}
The symplectic slice $(Y_\tau,\omega_\tau,\Phi_\tau)$ is Spin-prequantized by the line bundle 
$\tilde{L}_\tau:=\tilde{L}\vert_{Y_\tau}$.
\end{lem}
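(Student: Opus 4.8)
The plan is to reduce the statement for the symplectic slice to the already-established Lemma~\ref{lem:restriction-spin-0} by exhibiting $(Y_\tau,\omega_\tau)$ as a symplectic reduction of $M$ (or rather of an auxiliary product) in a way that tracks the $\spinc$-data. First I would recall the defining property of $\tilde L$: there is a compatible invariant almost complex structure $J$ on $M$ and a Kostant-Souriau line bundle $L_{2\omega}$ on $(M,2\omega,2\Phi)$ with $\tilde L^{2}=L_{2\omega}\otimes\kappa_J$. The key geometric input is the symplectic cross-section theorem: $Y_\tau=\Phi^{-1}(U_\tau)$ is a $K_\tau$-invariant symplectic submanifold, and along $Y_\tau$ one has a $K$-invariant decomposition $\T M\vert_{Y_\tau}\simeq \T Y_\tau\oplus (\kgot/\kgot_\tau)_{Y_\tau}\oplus J((\kgot/\kgot_\tau)_{Y_\tau})$ of the normal directions into a symplectic sub-bundle isomorphic (as a real vector bundle with invariant metric) to the trivialized bundle $\kgot/\kgot_\tau$ and its $J$-image. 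Hence the restriction of $J$ to $Y_\tau$ preserves $\T Y_\tau$, giving a $K_\tau$-invariant compatible almost complex structure $J_\tau$ on $(Y_\tau,\omega_\tau)$, and one gets $\kappa_J\vert_{Y_\tau}\simeq \kappa_{J_\tau}\otimes(\text{line bundle built from }\kgot/\kgot_\tau)$. Since $\kgot/\kgot_\tau$ carries a $K_\tau$-invariant complex structure with $K_\tau$-weights the positive roots not orthogonal to $\tau$, its top exterior power is the one-dimensional $K_\tau$-representation $\C_{2(\rho-\rho_\tau)}$; this is the source of the shift.

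Next I would compute: restricting $\tilde L^{2}=L_{2\omega}\otimes\kappa_J$ to $Y_\tau$ gives
$$
\tilde L_\tau^{2}\ \simeq\ \bigl(L_{2\omega}\vert_{Y_\tau}\otimes \C_{-2(\rho-\rho_\tau)}\bigr)\otimes\kappa_{J_\tau}\otimes\C_{2(\rho-\rho_\tau)}\otimes\C_{-2(\rho-\rho_\tau)}
$$
— more carefully, $\kappa_J\vert_{Y_\tau}=\kappa_{J_\tau}\otimes\det_{\C}(\kgot/\kgot_\tau)^{*}=\kappa_{J_\tau}\otimes\C_{-2(\rho-\rho_\tau)}$, so $\tilde L_\tau^{2}=\bigl(L_{2\omega}\vert_{Y_\tau}\bigr)\otimes\kappa_{J_\tau}\otimes\C_{-2(\rho-\rho_\tau)}$. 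The remaining point is that $\bigl(L_{2\omega}\vert_{Y_\tau}\bigr)\otimes\C_{-2(\rho-\rho_\tau)}$ is a Kostant-Souriau line bundle over $(Y_\tau,2\omega_\tau,2\Phi_\tau)$: the curvature is unchanged (tensoring by a flat line bundle), $\Phi\vert_{Y_\tau}$ is already a moment map for the $K_\tau$-action, and the twist by $\C_{-2(\rho-\rho_\tau)}$ exactly shifts the Kostant relation $\Lcal(X)-\nabla_{X_M}=i\langle 2\Phi,X\rangle$ by $-2\langle\rho-\rho_\tau,X\rangle$ for $X\in\kgot_\tau$, which is precisely the change $2\Phi\mapsto 2\Phi_\tau=2(\Phi\vert_{Y_\tau}-(\rho-\rho_\tau))$. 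Combining, $(J_\tau,\tilde L_\tau)$ Spin-prequantizes $(Y_\tau,\omega_\tau,\Phi_\tau)$, which is the claim.

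The main obstacle I anticipate is bookkeeping the $\rho$-shift rather than anything deep: one must verify carefully that the $K_\tau$-equivariant complex structure on the normal bundle $\kgot/\kgot_\tau$ (inherited from $J$ via the cross-section identification) has determinant precisely $\C_{2(\rho-\rho_\tau)}$ — i.e.\ that the roots appearing are exactly the positive roots not orthogonal to $\tau$ counted once each — and that the half of them coming with $J$ versus $-J$ does not spoil the identification of $\tilde L_\tau$ as a genuine line bundle (as opposed to only $\tilde L_\tau^{2}$ being controlled). A secondary subtlety is matching orientations: since $J$ is compatible with $\omega$ and $J_\tau$ with $\omega_\tau$, the relevant $\epsilon$-signs are $+1$, but one should note this to be sure the Spin-quantization of the slice is defined with the correct sign convention. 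Apart from these, the argument is a direct restriction computation parallel to the proof of Lemma~\ref{lem:restriction-spin-0}.
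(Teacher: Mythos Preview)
Your argument is essentially the paper's: restrict the identity $\tilde L^{2}=L_{2\omega}\otimes\kappa_J$ to $Y_\tau$, compute $\kappa_J\vert_{Y_\tau}=\kappa_{J_\tau}\otimes\C_{-2(\rho-\rho_\tau)}$, and observe that $L_{2\omega}\vert_{Y_\tau}\otimes\C_{-2(\rho-\rho_\tau)}$ is a Kostant--Souriau bundle for $(Y_\tau,2\omega_\tau,2\Phi_\tau)$. Two small corrections. First, your opening sentence announces a reduction to Lemma~\ref{lem:restriction-spin-0} via an auxiliary product, but you never carry that out; what you actually write is a direct restriction argument (as in the paper), so drop that sentence. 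Second, and more substantively, the tangent decomposition you state is wrong: the normal bundle of $Y_\tau$ in $M\simeq K\times_{K_\tau}Y_\tau$ is simply $\kgot/\kgot_\tau$, a $J$-invariant (hence complex) sub-bundle of real rank $\dim K/K_\tau$, \emph{not} $\kgot/\kgot_\tau\oplus J(\kgot/\kgot_\tau)$. You have imported the coisotropic picture of Lemma~\ref{lem:restriction-spin-0} (where $\hgot_\Zcal$ embeds isotropically and $J(\hgot_\Zcal)$ is a complementary direction), but here $Y_\tau$ is already symplectic and the normal bundle is the tangent space of the base $K/K_\tau$, which carries its own compatible complex structure $J_o$. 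This does not damage the rest of your argument, since your canonical-bundle formula $\kappa_J\vert_{Y_\tau}=\kappa_{J_\tau}\otimes\det_\C(\kgot/\kgot_\tau)^*=\kappa_{J_\tau}\otimes\C_{-2(\rho-\rho_\tau)}$ is correct and is exactly what the paper uses.
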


\begin{proof} 
We consider the open subset $K\times_{K_\tau}Y_\tau$ of $M$ and the projection  \break 
$\pi:K\times_{K_\tau}Y_\tau\to K/K_\tau$. We can suppose that the 
Spin-prequantum data, when restricted to $K\times_{K_\tau}Y_\tau$, is given by $(J,\tilde{L})$ where 
$J$ is a compatible almost complex structure on $K\times_{K_\tau}Y_\tau$ defined as the ``sum'' of 
the compatible almost complex structures $J_o$ and $J_\tau$: $J_o$ on $K/K_\tau$ and $J_\tau$ on $Y_\tau$. 

When we restrict the identity $L_{2\omega}=\tilde{L}^2\otimes \kappa_J^{-1}$ to $Y_\tau$ we get 
\begin{equation}\label{eq:L-tilde-Tau}
L_{2\omega}\vert_{Y_\tau}=(\tilde{L}\vert_{Y_\tau})^2\otimes \kappa_{J_\tau}^{-1}\otimes \C_{2(\rho-\rho_\tau)}.
\end{equation}
We consider the following line bundle on $Y_\tau$ :   
$$
L_{2\omega_\tau}:=L_{2\omega}\vert_{Y_\tau}\otimes\C_{2(\rho-\rho_\tau)}^{-1}.
$$
The relation (\ref{eq:L-tilde-Tau}) is then $L_{2\omega_\tau}=(\tilde{L}\vert_{Y_\tau})^2\otimes \kappa_{J_\tau}^{-1}$. Since 
$L_{2\omega_\tau}$ is a $K_\tau$-equivariant prequantum bundle over $(Y_\tau,2\omega_\tau,2\Phi_\tau)$, we 
conclude that $(Y_\tau,\omega_\tau,\Phi_\tau)$ is Spin-prequantized by the data $(J_\tau,\tilde{L}_\tau)$. 
\end{proof}

\medskip

Let us consider the case where $\tau=\sigma$ is {\em the smallest face} of the Weyl chamber so that 
moment polyhedron $\Delta(M):=\Phi(M)\cap\tgot^*_+$ is contained in the closure of $\sigma$. 
Then  the symplectic slice $Y_\sigma$ is equal to $\Phi^{-1}(\sigma)$, and the action of the subgroup 
$[K_\sigma,K_\sigma]$ is trivial on it \cite{L-M-T-W}.

We will then consider the Hamiltonian action of the center $Z_\sigma=Z(K_\sigma)$ 
on $Y_\sigma$. The map $\Phi_\sigma: Y_\sigma\to \kgot_\sigma^*$ takes values in 
$\zgot_\sigma^*=\R\sigma\subset\tgot^*$ and corresponds to the moment map 
relative to the action of $Z_\sigma$ on $(Y_\sigma,\omega_\sigma)$.  We know that 
$(Y_\sigma,\omega_\sigma,\Phi_\sigma)$ is Spin-prequantized by $\tilde{L}_\sigma:=\tilde{L}\vert_{Y_\sigma}$.

For each dominants weights $\mu$ which belongs to the closure of $\sigma$, we consider the 
symplectic reduction
\begin{eqnarray*}
M_\mu^\sigma&=&\Phi^{-1}(\Ocal_{\mu}^{\sigma})/K\\
&=& \Phi^{-1}_\sigma(\mu)/Z_\sigma.
\end{eqnarray*}

\medskip

{\bf For the rest of this section we fix a dominant weight $\mu\in\overline{\sigma}$  such that $\mu+\rho-\rho_\sigma\in\p(M)$, and 
we explain how one defines the Spin-quantization of the (possibly singular) reduced spaces $M_\mu^\sigma$.}

\medskip

Let $\pf\subset\zgot_\sigma^*$ be the rationnal vector subspace generated by $\{a-b\ \vert a,b\in\p(M)\}$.  
Let $\zgot_\sigma^\p\subset \zgot_\sigma$ be the subspace orthogonal to $\pf$, and let 
$Z_\sigma^\p\subset Z_\sigma$ be the corresponding subtorus. 

\begin{lem}\label{lem:Z-sigma-delta-action}
The torus $Z_\sigma^\p$ acts trivially on $Y_\sigma$ and on the line bundle $\tilde{L}_\sigma\otimes\C_{-\mu}$.
\end{lem}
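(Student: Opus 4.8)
The plan is to unwind the definition of $Z_\sigma^\p$ and show that both assertions reduce to the single fact that $\Phi_\sigma(Y_\sigma)$ spans (as an affine space) at most a $\pf$-coset. First I would recall that $Y_\sigma=\Phi^{-1}(\sigma)$, that the $[K_\sigma,K_\sigma]$-action on $Y_\sigma$ is trivial, and that $\Phi_\sigma:Y_\sigma\to\zgot_\sigma^*$ is the moment map for the residual $Z_\sigma$-action. The key geometric observation is that $\Phi_\sigma(Y_\sigma)$ is contained in an affine subspace directed by $\pf$: indeed $\Phi_\sigma(Y_\sigma)=\Phi(Y_\sigma)-(\rho-\rho_\sigma)$ and $\Phi(Y_\sigma)=\p(M)$ (up to the identification of $Y_\sigma$ with $\Phi^{-1}(\sigma)$), so differences of values of $\Phi_\sigma$ lie in $\pf$ by the very definition of $\pf$.

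Next I would prove the triviality of the action on $Y_\sigma$. Since $Z_\sigma$ is a torus acting on the connected symplectic manifold $(Y_\sigma,\omega_\sigma)$ with moment map $\Phi_\sigma$, the infinitesimal action of $X\in\zgot_\sigma$ is determined by $d\langle\Phi_\sigma,X\rangle$ via $\iota(X_{Y_\sigma})\omega_\sigma=-d\langle\Phi_\sigma,X\rangle$. For $X\in\zgot_\sigma^\p$ the pairing $\langle\Phi_\sigma,X\rangle$ is \emph{constant} on $Y_\sigma$: its differential annihilates all tangent vectors because the image $\Phi_\sigma(Y_\sigma)$ sits in a $\pf$-coset and $X\perp\pf$. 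Hence $\iota(X_{Y_\sigma})\omega_\sigma=0$, and nondegeneracy of $\omega_\sigma$ forces $X_{Y_\sigma}=0$; as this holds for all $X$ in the Lie algebra of the torus $Z_\sigma^\p$ and $Y_\sigma$ is connected, $Z_\sigma^\p$ acts trivially.

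Finally, for the line bundle: write the Kostant--Souriau relation for $L_{2\omega_\sigma}$, namely $\Lcal(X)-\nabla_{X_{Y_\sigma}}=2i\langle\Phi_\sigma,X\rangle$ for $X\in\zgot_\sigma$ (this is the defining property of the prequantum bundle appearing in the proof of Lemma~\ref{lem:restriction-spin-slice}). Using $\tilde{L}_\sigma^2=L_{2\omega_\sigma}\otimes\kappa_{J_\sigma}$, the compatible complex structure $J_\sigma$ on $Y_\sigma$ is $Z_\sigma^\p$-invariant and, since $Z_\sigma^\p$ acts trivially on $Y_\sigma$, it acts trivially on $\kappa_{J_\sigma}$ as well; so it suffices to treat $L_{2\omega_\sigma}$, or rather its ``square root'' $\tilde{L}_\sigma$. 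For $X\in\zgot_\sigma^\p$ we have $X_{Y_\sigma}=0$, hence $\nabla_{X_{Y_\sigma}}=0$ and the infinitesimal action of $X$ on sections of $\tilde{L}_\sigma$ is scalar multiplication by $i\langle\Phi_\sigma,X\rangle$, a constant $c(X)$ on the connected $Y_\sigma$. Evaluating at the point over $\mu\in\overline\sigma$ — i.e.\ on the fiber of $\Phi_\sigma^{-1}(\mu)$ — gives $c(X)=i\langle\mu,X\rangle$, so the residual $Z_\sigma^\p$-action on $\tilde{L}_\sigma$ is by the character $X\mapsto e^{i\langle\mu,X\rangle}$, which is exactly the action on $\C_\mu$. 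Therefore $Z_\sigma^\p$ acts trivially on $\tilde{L}_\sigma\otimes\C_{-\mu}$.

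The main obstacle is bookkeeping rather than conceptual: one must be careful that $\Phi_\sigma(Y_\sigma)$ really equals $\p(M)-(\rho-\rho_\sigma)$ (using $Y_\sigma=\Phi^{-1}(\sigma)$ and the Kirwan convexity/cross-section picture), and that the half-form twist $\kappa_{J_\sigma}^{1/2}$ carries no residual $Z_\sigma^\p$-weight — which follows from triviality of the $Z_\sigma^\p$-action on $Y_\sigma$, but should be stated cleanly to justify passing from $L_{2\omega_\sigma}$ to $\tilde{L}_\sigma$. Once the ``constant pairing'' observation is in place, both claims follow from nondegeneracy of $\omega_\sigma$ and the Kostant--Souriau identity with essentially no computation.
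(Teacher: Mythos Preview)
Your argument is correct and follows essentially the same route as the paper: both use that $\langle\Phi_\sigma,X\rangle$ is constant for $X\in\zgot_\sigma^\Delta$ to kill the vector field via nondegeneracy of $\omega_\sigma$, and then combine the Kostant--Souriau relation on $L_{2\omega_\sigma}$ with triviality of the action on $\kappa_{J_\sigma}$ to handle the line bundle. The only cosmetic difference is that the paper carries out the square-root step explicitly---showing $Z_\sigma^\Delta$ acts trivially on $(\tilde{L}_\sigma\otimes\C_{-\mu})^2\otimes\kappa_{J_\sigma}^{-1}$ and then on $(\tilde{L}_\sigma\otimes\C_{-\mu})^2$---whereas you assert a Kostant--Souriau identity directly on $\tilde{L}_\sigma$; your own caveat in the final paragraph already flags this, and the fix is exactly the squaring argument the paper uses.
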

\begin{proof} 
By definition of $\zgot_\sigma^\p$, $0=d\langle \Phi_\sigma, X\rangle=-\iota(X_{Y_\sigma})\omega_\sigma$ on 
$Y_\sigma$ for any $X\in\zgot_\sigma^\p$. Hence the torus $Z_\sigma^\p$ acts trivially on $Y_\sigma$. 
Let $L_{2\omega_\sigma}$ be the Kostant-Souriau line bundle over $(Y_\sigma,2\omega_\sigma,2\Phi_\sigma)$ so that 
$\tilde{L}_\sigma^2=L_{2\omega_\sigma}\otimes \kappa_{J_\sigma}$ (see Lemma \ref{lem:restriction-spin-slice}). 
We have on the section of $L_{2\omega_\sigma}$ the following equality of linear operators: 
$$
\Lcal(X)-\nabla_{X_M}= i \langle 2\Phi_\sigma, X\rangle,\quad \forall X\in\zgot_\sigma.
$$
If one takes $X\in\zgot_\sigma^\p$, the function $y\in Y_\sigma\mapsto \langle \Phi_\sigma(y), X\rangle$ 
is constant equal to $\langle \mu, X\rangle$. Finally 
$$
\Lcal(X)-2i\langle \mu, X\rangle=0,\quad \forall X\in\zgot_\sigma^\p
$$
as an operator on the section of $L_{2\omega_\sigma}$. In other words, the torus $Z_\sigma^\p$ acts trivially on
$L_{2\omega_\sigma}\otimes \C_{-2\mu}=(\tilde{L}_\sigma\otimes\C_{-\mu}) ^2\otimes\kappa_{J_\sigma}^{-1}$. 
Since $Z_\sigma^\p$ acts trivially on $\kappa_{J_\sigma}$, we conclude finally that $Z_\sigma^\p$ acts trivially on the 
line bundle $\tilde{L}_\sigma\otimes\C_{-\mu}$.
\end{proof}

Let $Z_\sigma'\subset Z_\sigma^\p$ be another subtorus such that $Z_\sigma=Z_\sigma^\p\times Z_\sigma'$: 
the dual of its Lie algebra $\zgot_\sigma'$ is identified with $\pf\subset\zgot_\sigma^*$. We look  now at 
$(Y_\sigma,\omega_\sigma)$ as a Hamiltonian $Z_\sigma'$-manifold with moment map 
$$
\Phi'_\sigma:=\Phi_\sigma-\mu=\Phi\vert_{Y_\sigma}-(\mu+\rho-\rho_\sigma).
$$
The $Z_\sigma'$-equivariant line bundle $\tilde{L}_\sigma':=\tilde{L}_\sigma\otimes\C_{-\mu}$ Spin-prequantizes
the Hamiltonian $Z_\sigma'$-manifold  $(Y_\sigma,\omega_\sigma,\Phi'_\sigma)$. 

If $0\in\pf$ is a regular value of $\Phi'_\sigma$, we know after Lemma \ref{lem:restriction-spin-0} that 
the orbifold reduced space $(M_\mu^\sigma,\omega_\mu^\sigma)$ is Spin-prequantized by the line bundle 
$$
\tilde{\Lcal}_{\mu}^\sigma:=\left(\tilde{L}\vert_{\Phi_\sigma^{-1}(\mu)}\otimes\C_{-\mu}\right)/Z'_\sigma, 
$$
and its Spin-quantization $\Qcal_{\hbox{\rm\tiny\spin}}(M_\mu^\sigma)$ 
is defined like in Definition \ref{def:Q-spin}. In the general case where $0\in\pf$ is not necessarily a regular 
value of $\Phi'_\sigma$ we proceed by shift desingularization. For $\esp\in\pf$ small enough and 
generic we consider the orbifold reduced space 
$$
M_{\mu+\esp}^\sigma:=(\Phi'_\sigma)^{-1}(\esp)/Z_\sigma'= \Phi_\sigma^{-1}(\mu+\esp)/Z'_\sigma 
$$
and its orbifold line bundle
$$
\tilde{\Lcal}_{\mu+\esp}^\sigma:=\left(\tilde{L}\vert_{\Phi_\sigma^{-1}(\mu+\esp)}\otimes\C_{-\mu}\right)/Z'_\sigma.
$$

The following crucial fact is proved in Section \ref{sec:torus-case}.

\begin{theo} \label{theo:quant-M-sigma-mu}
The Riemann-Roch number $RR(M_{\mu+\esp}^\sigma, \tilde{\Lcal}_{\mu+\esp}^\sigma)\in\Z$ does not depend 
of the choice of a generic  and small enough $\esp\in\pf$.
\end{theo}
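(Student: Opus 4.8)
The plan is to reduce the statement to a variation-of-reduction argument for a Hamiltonian action of the torus $Z_\sigma'$, and then to analyze what happens as the regular value $\esp\in\pf$ crosses a wall of the set of regular values. Concretely, $(Y_\sigma,\omega_\sigma,\Phi'_\sigma)$ is a (possibly non-compact) Hamiltonian $Z_\sigma'$-manifold which is Spin-prequantized by $\tilde L_\sigma'=\tilde L_\sigma\otimes\C_{-\mu}$, and by construction $0\in\pf$ lies in the relative interior of the polytope $\Phi'_\sigma(Y_\sigma)\cap(\text{something})$; the key point is that properness of $\Phi$ on $M$ ensures that, after restricting to a neighbourhood of $(\Phi'_\sigma)^{-1}(\text{a small ball})$, everything behaves like a proper moment map, so the reduced spaces $M_{\mu+\esp}^\sigma$ are compact orbifolds and their Spin-quantizations $RR(M^\sigma_{\mu+\esp},\tilde\Lcal^\sigma_{\mu+\esp})$ are well defined integers for generic small $\esp$.

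The first step I would carry out is to fix the combinatorial picture: the set of regular values of $\Phi'_\sigma$ near $0$ is, after intersecting with a small ball in $\pf$, the complement of a finite union of ``walls'' (images of lower-dimensional orbit-type strata), hence a finite union of open convex cones/chambers. So it suffices to show that $RR(M^\sigma_{\mu+\esp},\tilde\Lcal^\sigma_{\mu+\esp})$ is the same for $\esp$ in two adjacent chambers, i.e. to compute the jump across a single wall. The second step is the wall-crossing computation itself. Here I would localize the Riemann-Roch character: write $RR$ of the ambient slice (or of a suitable neighbourhood) in terms of contributions of the connected components of the critical set of $\|\Phi'_\sigma-\esp\|^2$, using the localization machinery of \cite{pep-RR}. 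For $\esp$ generic small, the only contribution near $0$ comes from $(\Phi'_\sigma)^{-1}(\esp)$, producing the term $RR(M^\sigma_{\mu+\esp},\tilde\Lcal^\sigma_{\mu+\esp})$; the difference between the two chambers is then a sum of contributions attached to the components of the critical set sitting over the wall, and the claim is exactly that this difference vanishes.

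The heart of the argument — and the step I expect to be the main obstacle — is showing that the wall contribution is zero. The mechanism is a \emph{rho-shift / half-form cancellation}: over a component $F$ of the fixed-point set corresponding to a subtorus $H'\subset Z_\sigma'$, the normal bundle splits into weight subbundles, and the presence of the factor $\kappa_{J_\sigma}^{1/2}$ (equivalently, of the half-form term built into the Spin-prequantization) shifts the weights appearing in the index contribution by exactly the ``$\rho$'' of the transverse data. Combined with the fact that $\esp$ is generic (so no shifted weight is ever $0$) and small (so only weights of one sign occur on each side of the wall), the Atiyah-Bott/Kawasaki fixed-point formula for the $H'$-invariant part of the contribution gives matching geometric series on the two sides, and their difference telescopes to $0$. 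Making this precise requires: (i) identifying the reduced space $M^\sigma_{\mu+\esp}$ near the wall as a fibration of lower reductions, à la Meinrenken-Sjamaar \cite{Meinrenken-Sjamaar}; (ii) checking the half-form bundle restricts compatibly so that the shifted weights are as claimed; and (iii) the sign bookkeeping of the orientations $\epsilon_{J_\sigma}=1$ (which holds since $J_\sigma$ is compatible) so that all contributions enter with consistent signs. Once the single-wall vanishing is established, a finite chain of such moves connects any two generic small $\esp,\esp'\in\pf$, giving the independence asserted in Theorem \ref{theo:quant-M-sigma-mu}. $\square$
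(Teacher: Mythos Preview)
Your strategy is plausible and could be pushed through, but it takes a more laborious route than the paper and leaves the decisive step --- the vanishing of the wall contribution --- only loosely sketched. The paper avoids wall-crossing altogether by passing through an intermediate object that is \emph{manifestly} independent of $\esp$.

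The paper's argument (Section~\ref{sec:torus-case}, carried over verbatim to the slice $Y_\sigma$) runs as follows. Fix a relatively compact neighbourhood $U_{\sigma,\mu}$ of $(\Phi'_\sigma)^{-1}(0)$ in $Y_\sigma$ and consider the localized Riemann--Roch character
\[
RR^{Z_\sigma'}_{\Phi'_\sigma-\esp}(U_{\sigma,\mu},\tilde L')\ \in\ \Rfor(Z_\sigma'),
\]
defined via the pushed symbol associated to the Kirwan vector field of $\Phi'_\sigma-\esp$. A homotopy-invariance argument for transversally elliptic symbols (Lemma~\ref{lem-1}, from \cite{pep-ENS}[Prop.~4.14]) shows that this \emph{entire generalized character} is independent of $\esp$ for $\esp$ small --- before taking any invariants, and regardless of whether $\esp$ is regular. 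For each generic small $\esp$ one then decomposes it over the critical values $\beta$ of $\|\Phi'_\sigma-\esp\|^2$; the $\beta\neq 0$ terms have vanishing $Z_\sigma'$-invariant part (Lemma~\ref{lem-2}, from \cite{pep-ENS}[Lemma~4.16]), and the $\beta=0$ term yields $RR(M_{\mu+\esp}^\sigma,\tilde\Lcal_{\mu+\esp}^\sigma)$ by (\ref{eq:RR-reduit}). Hence the Riemann--Roch number in question equals $[\,\cdot\,]^{Z_\sigma'}$ of an $\esp$-independent object, and the theorem follows immediately.

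By contrast you compare two regular values $\esp_1,\esp_2$ directly and try to show the jump across each intervening wall is zero. This is essentially the content of Lemma~\ref{lem-2} unpacked wall by wall, so it can be made to work, but your description of the mechanism (``matching geometric series \dots\ telescope to $0$'' via Atiyah--Bott/Kawasaki) is not quite right. In the abelian situation the vanishing does not come from a cancellation between two expansions; it comes from the criterion of Proposition~\ref{prop:RR-beta=0}: the half-form factor in $\tilde L'$ shifts the infinitesimal $\beta$-weight of the localized contribution so that it is strictly positive, hence no $Z_\sigma'$-invariant piece survives at all. The paper's route is both shorter and conceptually cleaner, since it never has to locate individual walls, identify the subtori $H'$, or perform any fixed-point computation: the constancy is established once, at the level of the full transversally elliptic index on $U_{\sigma,\mu}$.
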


Thanks to the last Theorem we can define the quantization $\spinq(M_\mu^\sigma)\in\Z$ of the 
(possibly singular) reduced space $M_\mu^\sigma$ for $\mu\in\what{K}\cap\overline{\sigma}$.

\begin{defi}\label{def:indice-reduction}
Let $\mu\in\what{K}\cap\overline{\sigma}$. 

$\bullet$ If $\mu +\rho-\rho_\sigma\in\Delta(M)$, the integer 
$\Qcal_{\hbox{{\rm \tiny spin}}}(M_\mu^\sigma)\in\Z$ is defined as the Riemann-Roch character 
$RR(M_{\mu+\esp}^\sigma, \tilde{\Lcal}_{\mu+\esp}^\sigma)$ for $\esp\in\pf$ generic and small enough.

$\bullet$  If $\mu +\rho-\rho_\sigma\notin\Delta(M)$, we set $\Qcal_{\hbox{{\rm \tiny spin}}}(M_\mu^\sigma)=0$
\end{defi}

\begin{rem}
If $\mu +\rho-\rho_\sigma$ does not belongs to the relative interior  of $\p(M)$, we can choose $\esp$ so that 
$\mu +\rho-\rho_\sigma+\esp\notin\p(M)$. Then the reduced space $M_{\mu+\esp}^\sigma$ is empty and 
the corresponding Riemann-Roch character $RR(M_{\mu+\esp}^\sigma, \tilde{\Lcal}_{\mu+\esp}^\sigma)$ vanishes. 
Hence $\Qcal_{\hbox{{\rm \tiny spin}}}(M_\mu^\sigma)=0$. 
\end{rem}

\section{Spin-quantization commutes with reduction}

Let $(M,\omega,\Phi)$ be a compact Hamiltonian $K$-manifold which is $\spin$ prequantized. 
We are looking to a geometric interpretation of the multiplicity, denoted $\mm_\mu$, of the 
representation $V_\mu^K$ into $\spinq^K(M)$.

The main result of this paper is the following

\begin{theo}\label{theo:QR-spin}
 Let $\sigma$ be the smallest face of the Weyl chamber so that 
 $\Phi(M)\cap\tgot^*_+\subset\overline{\sigma}$. For $\mu\in\what{K}$, we have
 $$
 \mm_\mu=\begin{cases}
   0 & \text{if}\quad \mu\notin\overline{\sigma};\\
    \Qcal_{\hbox{{\rm \tiny spin}}}(M_\mu^\sigma) & \text{if}\quad
   \mu\in\overline{\sigma}.
\end{cases}
$$

\end{theo}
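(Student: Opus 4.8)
The plan is to compute the multiplicity $\mm_\mu = \left[\spinq^K(M\times\overline{\Ocal}_\mu^\sigma)\right]^K$ via the shifting trick (using $\spinq^K(\overline{\Ocal}_\mu^\sigma) = (V_\mu^K)^*$ from the subsection on coadjoint orbits), and then to evaluate the right-hand side by localizing the Riemann-Roch character of $N:=M\times\Ocal_\mu^\sigma$ onto the critical set of $\|\Phi_\mu\|^2$, where $\Phi_\mu: N\to\kgot^*$ is the moment map for the $K$-action. The first step is the vanishing statement: if $\mu\notin\overline{\sigma}$, then the Kirwan polytope $\Delta(M\times\overline{\Ocal}_\mu^\sigma)$ does not contain $0$ (because $\Delta(M)\subset\overline{\sigma}$ while $-\overline{\Ocal}_\mu^\sigma$ meets $-\overline{\sigma}$ only along the faces of $\sigma$, and $\mu+\rho-\rho_\sigma$ lies in $\sigma$ rather than $\overline{\sigma}$ once $\mu\notin\overline{\sigma}$); by the standard $[Q,R]=0$-type vanishing for the $K$-invariant part when $0\notin\Phi_\mu(N)$, one gets $\mm_\mu = 0$. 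This should follow from convexity of moment maps together with the functorial properties collected in Proposition \ref{prop-Qspin-functorial}.

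For $\mu\in\overline{\sigma}$ the core of the argument splits according to whether $\sigma=\tgot^*_+$ or not, exactly as flagged in the introduction. When $\sigma=\tgot^*_+$, the polytope meets the interior of the chamber, $0$ is in the relevant moment image, and the localization of $RR^K$ near $\Phi_\mu^{-1}(0)$ reduces the problem to the regular-value reduced space $(M\times\overline{\Ocal}_\mu^\sigma)_0 = M_\mu^\sigma$ (after the cross-section identification $\Phi_\sigma^{-1}(\mu)/Z_\sigma$); combining with Lemma \ref{lem:restriction-spin-0}, Theorem \ref{theo:quant-M-sigma-mu} and shift-desingularization to handle singular reductions, this yields $\mm_\mu = \spinq(M_\mu^\sigma)$. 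This case is essentially the content of \cite{pep-ENS}, so I would cite it and concentrate on the second case.

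When $\sigma\neq\tgot^*_+$ we have $\Phi_\mu^{-1}(0)=\emptyset$, so the naive localization fails. The plan is to localize $RR^K(N,-)$ instead near the compact set $K\cdot(N^{\rho_\sigma}\cap\Phi_\mu^{-1}(-\rho_\sigma))$, where $N^{\rho_\sigma}$ is the fixed submanifold of the circle generated by $\rho_\sigma$. Here $-\rho_\sigma$ is precisely the point through which the cross-section $Y_\sigma$ for $M$ (translated by $\rho-\rho_\sigma$) is built, and $\rho_\sigma$ is orthogonal to $\sigma$; one uses that on $N^{\rho_\sigma}$ the square of the moment map for $K$ agrees, near this set, with the square of the $Z_\sigma$-moment map on the slice. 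The argument then proceeds: (i) show the contribution of all other components of $\Cr(\|\Phi_\mu\|^2)$ to $\left[RR^K(N,\tilde L_N)\right]^K$ vanishes — this is the transversally-elliptic / Atiyah-Bott style vanishing near non-minimal critical sets, using positivity of the prequantum data; (ii) identify the contribution near $K\cdot(N^{\rho_\sigma}\cap\Phi_\mu^{-1}(-\rho_\sigma))$ with an induced class from $K_\sigma$, so that taking $[\,\cdot\,]^K$ becomes taking $[\,\cdot\,]^{K_\sigma}$, and then $[\,\cdot\,]^{Z_\sigma}$ after the $[K_\sigma,K_\sigma]$-action is seen to be trivial on $Y_\sigma$; (iii) recognize the resulting number as $RR(M_{\mu+\esp}^\sigma,\tilde{\Lcal}_{\mu+\esp}^\sigma)$ for small generic $\esp$, which is $\spinq(M_\mu^\sigma)$ by Definition \ref{def:indice-reduction} and Theorem \ref{theo:quant-M-sigma-mu}.

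The main obstacle I anticipate is step (ii)–(iii) in the case $\sigma\neq\tgot^*_+$: making the localization near $K\cdot(N^{\rho_\sigma}\cap\Phi_\mu^{-1}(-\rho_\sigma))$ rigorous and bookkeeping the $\rho$-shift correctly. One must show that the relevant piece of $\Cr(\|\Phi_\mu\|^2)$ really is $K\cdot(N^{\rho_\sigma}\cap\Phi_\mu^{-1}(-\rho_\sigma))$ and is the only one contributing after taking $K$-invariants, which requires a careful Morse-Kirwan analysis of $\|\Phi_\mu\|^2$ on the non-compact manifold $N$ together with the behaviour of $\Ocal_\mu^\sigma$ near its $\rho_\sigma$-fixed points; and one must match the resulting half-form/line-bundle data on the slice with $\tilde{\Lcal}_{\mu+\esp}^\sigma$, i.e. verify that the $\C_{2(\rho-\rho_\tau)}$ twist appearing in \eqref{eq:L-tilde-Tau} conspires with the $\rho_\sigma$-shift so that the Spin-prequantum data descends exactly to the line bundle in Definition \ref{def:indice-reduction}. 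The cohomological vanishing in step (i), while technical, is by now standard via transversally elliptic symbols and I would invoke the machinery of \cite{pep-RR,pep-ENS} for it.
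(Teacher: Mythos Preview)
Your overall architecture (shifting trick plus Witten localization of the Riemann-Roch character on $N$) matches the paper, but two concrete choices in your plan would not work as written.

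\medskip

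\textbf{Wrong orbit in the shifting trick.} You shift by $\Ocal_\mu^\sigma=K\cdot(\mu+\rho-\rho_\sigma)$, whereas the paper shifts by the \emph{regular} orbit $\Ocal_\mu=K\cdot(\mu+\rho)$. With your choice, the orbit $\Ocal_\mu^\sigma$ is not even defined when $\mu\notin\overline{\sigma}$, so your first vanishing paragraph has no object to work with. More seriously, your claim that ``when $\sigma\neq\tgot^*_+$ we have $\Phi_\mu^{-1}(0)=\emptyset$'' is false with your orbit: $\Phi_\mu^{-1}(0)\neq\emptyset$ exactly when $\mu+\rho-\rho_\sigma\in\Delta(M)$, and since $\mu+\rho-\rho_\sigma\in\sigma$ this typically \emph{does} happen. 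The paper's choice $\Ocal_\mu=K\cdot(\mu+\rho)$ is what forces $\Phi_\mu^{-1}(0)=\emptyset$ whenever $\sigma\neq\tgot^*_+$ (because $\mu+\rho$ lies in the open chamber while $\Delta(M)\subset\overline{\sigma}$), and it is also what makes the key inequality below go through uniformly in $\mu$.

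\medskip

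\textbf{The vanishing is not ``standard $[Q,R]=0$''; it is the whole point.} For $\mu\notin\overline{\sigma}$ you invoke ``standard $[Q,R]=0$-type vanishing for the $K$-invariant part when $0\notin\Phi_\mu(N)$'', and for step (i) you invoke generic ``positivity of the prequantum data''. These are precisely the statements being proved. The paper's mechanism is specific: one applies the criterion of Proposition~\ref{prop:RR-beta=0} and must verify, for every critical component $C_\beta\subset\Cr(\|\Phi_N\|^2)$, the inequality
\[
\|\beta\|^2+\tfrac{1}{2}\tr_\beta|\T_n N|>\tr_\beta|\kgot|.
\]
This is reduced (via $\T_\xi\Ocal_\mu\simeq\kgot/\kgot_\xi$ and $\kgot/\kgot_m\subset\T_mM$) to the estimate $\|\beta\|^2\geq\tfrac{1}{2}\tr_\beta|\kgot_{\Phi(m)}|$, proved in Lemma~\ref{lem:ineq-4} by combining three concrete inequalities: the nearest-point property $\|a-w(\mu+\rho)\|\geq\|a-(\mu+\rho)\|$ on the orbit, the orthogonal-projection bound $\|\mu+\rho-a\|\geq\|\rho_\tau\|$ for $a\in\tau$, and $\tfrac{1}{2}\tr_\beta|\kgot_\tau|\leq\|\rho_\tau\|\|\beta\|$. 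The equality analysis pins the surviving component down to $\beta=-\rho_\sigma$ (and a further lemma rules out $\beta=-\rho_\tau$ for proper faces $\tau\subsetneq\sigma$). None of this is a citation to \cite{pep-ENS}; it is new and it is where the $\rho$-shift actually enters. Finally, in step (ii) the induction is from $K_\beta=K_{\rho_\sigma}$ (the centralizer of $\rho_\sigma$), not from $K_\sigma$; one has $K_{\rho_\sigma}\cap K_\sigma=T$, and the passage to $Z_\sigma$ goes through $T$ via Proposition~\ref{prop:induction}, not by killing $[K_\sigma,K_\sigma]$ directly.
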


In this section we introduce the main tools needed for the proof of Theorem \ref{theo:QR-spin}. 

In Section \ref{subsec:trans-elliptic}, we recall the notion of {\em tranversally elliptic symbols}.

In Section \ref{subsec:witten-deformation}, we recall the Witten's way of localization the 
Riemann-Roch character \cite{pep-RR}. We recall in Proposition \ref{prop:RR-beta=0}, the criterium 
observed in \cite{pep-ENS} for the vanishing of the invariant part of the localized Riemann-Roch character.

In Section \ref{subsec:induction-formula}, we recall an induction formula proved in 
\cite{pep-RR,pep-ENS} for the localized Riemann-Roch character. 

In Section \ref{sec:torus-case}, we prove Theorem \ref{theo:QR-spin} when $K$ is a torus\footnote{This situation was 
already handeld in \cite{pep-ENS}.}. We give by the same way a proof of Theorem \ref{theo:quant-M-sigma-mu} which 
is essential to the definition of  the Spin-quantization of the (possibly singular) reduced spaces $M_\mu^\sigma$.

\subsection{Elliptic and transversally elliptic symbols}\label{subsec:trans-elliptic}

Here we give the basic definitions from the theory of transversally
elliptic symbols (or operators) defined by Atiyah-Singer in
\cite{Atiyah74}. For an axiomatic treatment of the index morphism
see Berline-Vergne \cite{B-V.inventiones.96.1,B-V.inventiones.96.2} and 
Paradan-Vergne \cite{pep-vergne:bismut}. For a short introduction see \cite{pep-RR}.

Let $\Xcal$ be a {\em compact} $K$-manifold. Let $p:\T
\Xcal\to \Xcal$ be the projection, and let $(-,-)_\Xcal$ be a
$K$-invariant Riemannian metric. If $E^{0},E^{1}$ are
$K$-equivariant complex vector bundles over $\Xcal$, a
$K$-equivariant morphism $h \in \Gamma(\T
\Xcal,\hom(p^{*}E^{0},p^{*}E^{1}))$ is called a {\em symbol} on $\Xcal$. The
subset of all $(x,v)\in \T \Xcal$ where\footnote{The map $h(x,v)$ will be also denote 
$h\vert_x(v)$} $h(x,v): E^{0}_{x}\to
E^{1}_{x}$ is not invertible is called the {\em characteristic set}
of $h$, and is denoted by $\Char(h)$.

In the following, the ``product'' of a symbol  $h$ 
by a complex vector bundle $F\to M$, is the symbol
$$
h\otimes F
$$
defined by $h\otimes F(x,v)=h(x,v)\otimes {\rm Id}_{F_x}$ from 
$E^{0}_x\otimes F_x$ to $E^{1}_x\otimes F_x$. Note that $\Char(h\otimes F)=\Char(h)$.

Let $\T_{K}\Xcal$ be the following subset of $\T \Xcal$ :
$$
   \T_{K}\Xcal\ = \left\{(x,v)\in \T \Xcal,\ (v,X_{\Xcal}(x))_{_{\Xcal}}=0 \quad {\rm for\ all}\
   X\in\kgot \right\} .
$$

A symbol $h$ is {\em elliptic} if $h$ is invertible
outside a compact subset of $\T \Xcal$ (i.e. $\Char(h)$ is
compact), and is $K$-{\em transversally elliptic} if the
restriction of $h$ to $\T_{K}\Xcal$ is invertible outside a
compact subset of $\T_{K}\Xcal$ (i.e. $\Char(h)\cap
\T_{K_2}\Xcal$ is compact). An elliptic symbol $h$ defines an
element in the equivariant $\K$-theory of $\T\Xcal$ with compact
support, which is denoted by $\K_{K}(\T \Xcal)$, and the
index of $h$ is a virtual finite dimensional representation of
$K$, that we denote $\indice^K_{\Xcal}(h)\in R(K)$
\cite{Atiyah-Segal68,Atiyah-Singer-1,Atiyah-Singer-2,Atiyah-Singer-3}.

Let  
$$
\Rforc(K)\subset R^{-\infty}(K)
$$ 
be the $R(K)$-submodule formed by all the infinite sum $\sum_{\mu\in\what{K}}m_\mu V_\mu^K$ 
where the map $\mu\in\what{K}\mapsto m_\mu\in\Z$ has at most a {\em
polynomial} growth. The $R(K)$-module $\Rforc(K)$ is the Grothendieck group 
associated to the {\em trace class} virtual $K$-representations: we can associate 
to any $V\in\Rforc(K)$, its trace $k\to {\rm Tr}(k,V)$ which is a generalized function on $K$ 
invariant by conjugation. Then the trace defines a morphism of $R(K)$-module
\begin{equation}\label{eq:trace}
\Rforc(K)\croc \fgene(K)^K.
\end{equation}

A $K$-{\em transversally elliptic} symbol $h$ defines an
element of $\K_{K}(\T_{K}\Xcal)$, and the index of
$h$ is defined as a trace class virtual representation of $K$, that we still denote 
$\indice^K_\Xcal(h)\in \Rforc(K)$. 

Remark that any elliptic symbol of $\T \Xcal$ is $K$-transversally
elliptic, hence we have a restriction map $\K_{K}(\T
\Xcal)\to \K_{K}(\T_{K}\Xcal)$, and a commutative
diagram
\begin{equation}\label{indice.generalise}
\xymatrix{ \K_{K}(\T \Xcal) \ar[r]\ar[d]_{\indice^K_\Xcal}
&
\K_{K}(\T_{K}\Xcal)\ar[d]^{\indice^K_\Xcal}\\
R(K)\ar[r] & \Rforc(K)\ .
   }
\end{equation}

\medskip

Using the {\em excision property}, one can easily show that the
index map $\indice^K_\Ucal: \K_{K}(\T_{K}\Ucal)\to
\Rforc(K)$ is still defined when $\Ucal$ is a
$K$-invariant relatively compact open subset of a
$K$-manifold (see \cite{pep-RR}[section 3.1]). 

\medskip 

Suppose that $M$ is a $K$-manifold equipped with an invariant almost complex structure $J$. 
Let us recall the definition of the Riemann-Roch character $RR^K_J(M,-)$. 

The complex vector bundle $(\T^* M)^{0,1}$ is $K$-equivariantly identified 
with the tangent bundle $\T M$ equipped with the complex structure $J$.
We work with the  Hermitian structure on  $(\T M,J)$ defined by : 
$(v,w):=\Omega(v,J w) - i \Omega(v,w)$ for $v,w\in \T M$. The symbol 
$$
\Thom(M,J)\in 
\Gamma\left(M,\hom(p^{*}(\wedge_{\C}^{even} \T M),\,p^{*}
(\wedge_{\C}^{odd} \T M))\right)
$$  
at $(m,v)\in \T M$ is equal to the Clifford map
\begin{equation}\label{eq.thom.complex}
 \clif_{m}(v)\ :\ \wedge_{\C}^{even} \T_m M
\longrightarrow \wedge_{\C}^{odd} \T_m M,
\end{equation}
where $\clif_{m}(v).w= v\wedge w - \iota(v)w$ for $w\in 
\wedge_{\C}^{\bullet} \T_{m}M$. Here $\iota(v):\wedge_{\C}^{\bullet} 
\T_{m}M\to\wedge^{\bullet -1} \T_{m}M$ denotes the 
contraction map. Since $\clif_{m}(v)^2=-\| v\|^2 {\rm Id}$, the map  
$\clif_{m}(v)$ is invertible for all $v\neq 0$. Hence the characteristic set 
of $\Thom(M,J)$ corresponds to the $0$-section of $\T M$.

Let $E$ be a $K$-equivariant complex vector bundle over $M$. It is a classical fact that the principal symbol  of 
the Dolbeault-Dirac operator $\overline{\partial}_{E}+\overline{\partial}_{E}^*$ is equal to the following 
elliptic symbol\footnote{Here we use an identification $\T^*M\simeq \T M$ given by an invariant Riemannian metric.}  
$$
\clif_E:=\Thom(M,J)\otimes E,
$$
see \cite{Duistermaat96}.  Since $M$ is compact, the symbol $\clif_E$ is elliptic and
then defines an element of the equivariant \textbf{K}-group of
$\T M$. 
\begin{defi}\label{def:RR}
The Riemann-Roch character $RR^K_J(M,E)\in R(K)$ is defined equivalently 

$\bullet$ as the topological index of $\clif_E\in\K_{K}(\T M)$, or 

$\bullet$ as the analytical index of the
Dolbeault-Dirac operator $\overline{\partial}_E+\overline{\partial}_E^*$.
\end{defi}

\subsection{Localization of the Riemann-Roch character}\label{subsec:witten-deformation}

Let $(M,\omega,\Phi)$ a compact Hamiltonian $K$-manifold Spin-prequantized by  
$(\tilde{L}, J)$ where $J$ is a {\em compatible} almost complex structure on $M$. 
The Riemann-Roch character attached to $J$ is just denoted $RR^K(M,-)$.

By definition the Spin-quantization of $(M,\omega,\Phi)$ is 
$$
\spinq^K(M):= RR^K(M,\tilde{L})\ \in\ R(K).
$$

We recall the Witten's deformation of the Riemann-Roch character \cite{pep-RR, pep-ENS}. We use in all 
this paper an isomorphism $\kgot^*\simeq \kgot$ defined by a $K$-invariant scalar product
on  $\kgot^*$. In order to simplify the notation, we use the same symbol for $\xi\in\kgot^*$ and its 
corresponding element in $\kgot$.

The moment map $\Phi$ is seen as en equivariant map from $M$ to $\kgot$. We define the {\em Kirwan vector field} on
$M$ : 
\begin{equation}\label{eq-kappa}
    \kappa_m= \left(\Phi(m)\right)_M(m), \quad m\in M.
\end{equation}

\begin{defi}\label{def:pushed-sigma}
The symbol  $\clif_{\tilde{L}}=\Thom(M,J)\otimes \tilde{L}$ pushed by the vector field $\kappa$ is the 
symbol $\clif^\kappa_{\tilde{L}}$ 
defined by the relation
$$
\clif^\kappa_{\tilde{L}}\vert_m(v)=\Thom(M,J)\otimes \tilde{L}\vert_m(v-\kappa_m)
$$
for any $(m,v)\in\T M$.
\end{defi}

Note that $\clif^\kappa_{\tilde{L}}\vert_m(v)$ is invertible except if
$v=\kappa_m$. If furthermore $v$ belongs to the subset $\T_K M$
of tangent vectors orthogonal to the $K$-orbits, then $v=0$ and
$\kappa_m=0$.  Indeed $\kappa_m$ is tangent to $K\cdot m$ while
$v$ is orthogonal.

Since $\kappa$ is the Hamiltonian vector field of the function
$\frac{-1}{2}\|\Phi\|^2$, the set of zeros of $\kappa$ coincides with the set
$\Cr(\|\Phi\|^2)$ of critical points of $\|\Phi\|^2$. Finally we have 
\begin{eqnarray*}
\Char(\clif^\kappa_{\tilde{L}})\cap \T_K M &\simeq&
\Cr(\|\Phi\|^2)\\
&=& \bigcup_{\beta\in\Bcal} \underbrace{K\cdot\left(M^\beta\cap\Phi^{-1}(\beta)\right)}_{C_\beta}
\end{eqnarray*}
where $\Bcal\subset \kgot^*$ is a finite subset parametrizing coadjoint orbits $K\cdot\beta$.

We are interested to the restriction $\clif^\kappa_{\tilde{L}}\vert_{U}$ of the elliptic symbol on an invariant open  subset 
$U\subset M$. Note that the set  $\Char(\clif^\kappa_{\tilde{L}}\vert_{U})\cap \T_K U \simeq \Cr(\|\Phi\|^2)\cap U$ is compact when  
\begin{equation}\label{eq:condition-U}
\partial \Ucal\cap \Cr(\|\Phi\|^2)=\emptyset.
\end{equation}
When (\ref{eq:condition-U}) holds we denote 
\begin{equation}\label{eq:Q-Phi-U}
\Qcal_{\Phi}^K(U):= \indice^K_{U}(\clif^\kappa_{\tilde{L}}\vert_{U})\quad
   \in\quad \Rforc(K)
\end{equation}
the equivariant index of the transversally elliptic symbol $\clif^\kappa_{\tilde{L}}\vert_{U}$.

For any $\beta\in \Bcal$, we consider a relatively compact open 
invariant neighborhood 
$U_\beta$ of  $C_\beta$ such that 
$\Cr(\|\Phi\|^2)\cap \overline{U_\beta}= C_\beta$. 

\begin{defi}\label{def:Q-beta}
We denote 
$$
\Qcal^K_{\beta}(M)\in\Rforc(K)
$$
the index of the transversally elliptic symbol $\clif^\kappa_{\tilde{L}}\vert_{U_\beta}$.
\end{defi}

Everything can be defined if we replace the line bundle $\tilde{L}$ by any equivariant complex vector bundle $E$. 
We can consider the pushed symbol $\clif_{E}^\kappa$,  and the localized Riemann-Roch characters 
$$
RR^K_\Phi(U,E):=\indice^K_{U}(\clif^\kappa_E\vert_{U})\quad {\rm and}\quad  
RR^K_\beta(M,E):=\indice^K_{U_\beta}(\clif^\kappa_E\vert_{U_\beta}).
$$

\medskip

A direct application of the excision property \cite{pep-RR} gives that 
\begin{equation}\label{eq:witten-loc}
\spinq^K(M)=\sum_{\beta\in\Bcal}\Qcal^K_\beta(M).
\end{equation}
If we work with $RR^K_\Phi(U,E)$, we have 
\begin{equation}\label{eq:witten-loc-RR}
RR^K_\Phi(U,E)=\sum_{\beta\in \Bcal\cap\Phi(U)}RR^K_\beta(U,E).
\end{equation}

The decomposition (\ref{eq:witten-loc}) and (\ref{eq:witten-loc-RR}) will be used in the next chapters 
when one want to compute the multiplicity, denoted $[\spinq^K(M)]^K$, of 
the trivial representation in $\spinq^K(M)$. We have 
$$
\left[\spinq^K(M)\right]^K=\sum_{\beta\in\Bcal}\left[\Qcal^K_\beta(M)\right]^K.
$$
and we finish this section by recalling a {\em criterium} under which one has $[\Qcal^K_\beta(M)]^K=0$. 

\medskip

Let $\beta$ be a non-zero element in $\kgot$: let $\tore_\beta\subset K$ be the torus generated 
by $\beta$.  For $m\in M^{\beta}$, let $\alpha^m_1,\cdots, \alpha^m_p$ 
be the real infinitesimal weights for the action of $\tore_{\beta}$ on the fibers of 
$\T_m M$ (we equip the fibers of $\T_m M/\T_m M^\beta$ with a $\tore_\beta$-invariant complex structure).  

\begin{defi}\label{trace-beta} 
    Let us denote by $\tr_{\beta}|\T_m M|$ the following 
positive number
$$
\tr_{\beta}|\T_m M|:=\sum_{i=1}^l |\langle
\alpha^m_i,\beta\rangle|\ .
$$
\end{defi} 

Note that $m\in M^\beta\mapsto \tr_{\beta}|\T_m M|$ is constant along a connected component of $M^\beta$. 
We see also that the expression $\tr_{\beta}|E|$ is well defined for any $H$-equivariant real 
vector bundle $E\to P$, when $\beta\in\hgot$ acts trivially on $P$. 

\begin{exam}\label{example:Tr-beta}
The map  $\beta\in\kgot\mapsto\tr_{\beta}|\kgot|$ is invariant under the adjoint action. When $\beta$ belongs to the 
Weyl chamber, one has $\tr_{\beta}|\kgot|=2(\rho,\beta)$. Note that $\tr_{\beta}|\kgot|\leq 2\|\rho\|\,\|\beta\|$ for any 
$\beta\in\kgot$.
\end{exam}

We have proved in \cite{pep-ENS} the following nice criterium.

\begin{prop} \label{prop:RR-beta=0}
Let $\beta\neq 0$ in $\Bcal$. The multiplicity of the trivial representation in
$\Qcal^K_\beta(M)$ is equal to zero if
\begin{equation}\label{eq.RR-beta=0}
\parallel\beta\parallel^2 +
\frac{1}{2}\tr_{\beta}|\T_m M|>\tr_{\beta}|\kgot|,\qquad \forall\ m\in M^\beta\cap\Phi^{-1}(\beta).
\end{equation}
\end{prop}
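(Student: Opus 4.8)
The plan is to reduce the computation of $[\Qcal^K_\beta(M)]^K$ to a local model near the critical set $C_\beta = K\cdot(M^\beta\cap\Phi^{-1}(\beta))$, and then to show that under the hypothesis \eqref{eq.RR-beta=0} the relevant local index has no invariant part. First I would use the induction formula (recalled in Section \ref{subsec:induction-formula}) to write $\Qcal^K_\beta(M)$ as $\indB$ applied to a $K_\beta$-transversally elliptic index on a neighborhood of $M^\beta\cap\Phi^{-1}(\beta)$ inside the symplectic slice $Y_\beta$; since $[\indB(E)]^K = [E]^{K_\beta}$, the question becomes whether the $K_\beta$-invariant part of this local index vanishes. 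Next, because $\beta$ is central in $\kgot_\beta$ and acts on the normal directions to $M^\beta$ with the weights $\alpha^m_i$, I would further localize on $M^\beta$ itself, decomposing the local symbol as the product of the Thom symbol of the normal bundle $\Ncal = \T M/\T M^\beta$ (pushed by the Kirwan vector field, which near $C_\beta$ is essentially the vector field generated by $\beta$) with the Dolbeault symbol along $M^\beta$.

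The heart of the matter is the following quantitative estimate. The normal bundle $\Ncal$ splits under $\tore_\beta$ into weight pieces $\Ncal_{\alpha^m_i}$, and the pushed Thom symbol contributes, fiberwise over $M^\beta$, an infinite sum of $\tore_\beta$-weights whose ``polarized'' part is bounded below in the $\beta$-direction by $\tfrac12\tr_\beta|\T_m M| = \tfrac12\sum_i|\langle\alpha^m_i,\beta\rangle|$ (this is the standard computation of the $T_\beta$-character of a pushed Thom class, already used in \cite{pep-RR,pep-ENS}). The twist by $\tilde L$ contributes the weight $\langle\Phi(m),\beta\rangle = \|\beta\|^2$ at a point $m\in M^\beta\cap\Phi^{-1}(\beta)$. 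Finally, the Dolbeault operator along $M^\beta$ together with the density of states coming from the $K_\beta/\tore_\beta$ directions can only shift weights by an amount controlled by $\tr_\beta|\kgot|$ — more precisely, a weight appearing in the would-be invariant part of $\indB$ must satisfy a balancing condition forcing it to lie within distance $\tr_\beta|\kgot|$ (in the $\beta$-direction) of a weight of the normal contribution. Putting these together: if $\|\beta\|^2 + \tfrac12\tr_\beta|\T_m M| > \tr_\beta|\kgot|$ for every $m\in M^\beta\cap\Phi^{-1}(\beta)$, then every $\tore_\beta$-weight occurring in the local index has strictly positive pairing with $\beta$, so the trivial $\tore_\beta$-representation (hence the trivial $K_\beta$-representation, hence the trivial $K$-representation) cannot appear, giving $[\Qcal^K_\beta(M)]^K = 0$.

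The main obstacle I expect is making the ``balancing'' step precise: one must track carefully how the weights of the elliptic operator along $M^\beta$ and the weights coming from inducing from $K_\beta$ up to $K$ (the $\tr_\beta|\kgot|$ term in Example \ref{example:Tr-beta}) combine with the positive contributions from the pushed Thom symbol and from $\langle\Phi(m),\beta\rangle$. This requires a good bookkeeping of $\tore_\beta$-weights in a neighbourhood of $C_\beta$, of the type carried out in \cite{pep-ENS}; the almost-complex structure need not be compatible near $C_\beta$, so one also has to check that the sign conventions and the choice of invariant complex structure on $\T M/\T M^\beta$ do not affect the estimate. Once the weight inventory is set up, the conclusion is a direct inequality comparison, so the remainder of the argument is formal.
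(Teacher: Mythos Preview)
Your plan follows the same route as the proof in \cite{pep-ENS}, and the paper itself does not reprove the proposition here---it only quotes it. The machinery is, however, on full display in Section~4.3 of the present paper (for the special value $\beta=-\rho_\sigma$), so you can check your outline against that computation. Two points in your sketch deserve sharpening.

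First, the induction step is \emph{not} the symplectic-slice induction of Proposition~\ref{prop:induction}; the relevant formula is the one cited from \cite{pep-RR} as (\ref{eq:induction1}):
\[
RR^K_\beta(M,\tilde L)\;=\;\indB\!\Big(RR^{K_\beta}_\beta(M,\tilde L)\otimes\wedge^\bullet_\C(\kgot/\kgot_\beta)_\C\Big),
\]
which keeps the whole manifold $M$ viewed as a $K_\beta$-space. The term $\tr_\beta|\kgot|$ then has a completely precise origin: it is the absolute value of the minimal $\beta$-weight occurring in the finite-dimensional $K_\beta$-module $\wedge^\bullet_\C(\kgot/\kgot_\beta)_\C$, namely $-(\delta_\beta,\beta)$ (see the computation preceding the list (1)--(3) in Section~4.3). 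There is no vague ``balancing'' with a Dolbeault operator along $M^\beta$.

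Second, after localizing on $M^\beta$ via (\ref{eq:localisation-beta}), the quantity $\|\beta\|^2+\tfrac12\tr_\beta|\T_mM|$ is exactly the $\tfrac{1}{i}\Lcal(\beta)$-eigenvalue on the line bundle $\tilde L\otimes\det\Ncal^{+,\beta}$ (not merely a lower bound for a polarized sum); the contribution $\|\beta\|^2$ comes from the Kostant--Souriau relation on $\tilde L^2\otimes\kappa_J^{-1}$, and the symmetric powers $S^k(\Ncal_\C^{+,\beta})$ only add strictly positive $\beta$-weights for $k\geq 1$. The conclusion is then immediate from Lemma~\ref{lem:indice-beta-positif}: if every $\beta$-weight on the twisting bundle is strictly positive, the $K_\beta$-invariant part of the localized index vanishes. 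Your concern about signs and the choice of complex structure on the normal bundle is legitimate but resolved precisely by the appearance of $\det\Ncal^{+,\beta}$ in the localization formula.
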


\begin{rem}
Note that condition (\ref{eq.RR-beta=0}) is equivalent to 
\begin{equation}\label{eq.RR-beta=0.bis}
\parallel\Phi(m)\parallel^2 +
\frac{1}{2}\tr_{\Phi(m)}|\T_{m} M|>\tr_{\Phi(m)}|\kgot|,\qquad \forall\ m\in C_\beta.
\end{equation}
\end{rem}

\medskip

If the critical set $C_\beta$ decomposes in a finite disjoint union of closed $K$-invariant subset 
$C_\beta=\cup_j C_\beta^j$, we consider invariant open neighborhood $U^j$ of $C^j_\beta$ such that 
$\overline{U^j_\beta}\cap\Cr(\|\Phi\|^2)=C^j_\beta$, and we define 
$$
\Qcal_{C_\beta^j}^K(M):=\indice^K_{U^j_\beta}(\clif^\kappa\vert_{U^j_\beta})\quad
   \in\quad \Rforc(K)
 $$
Then the generalized character $\Qcal^K_\beta(M)$ is equal to the sum 
$\sum_j\Qcal^K_{C_\beta^j}(M)$ and Proposition \ref{prop:RR-beta=0} tells us 
that $[\Qcal^K_{C_\beta^j}(M)]^K=0$ if (\ref{eq.RR-beta=0.bis}) holds on $C^j_\beta$.

\subsection{Induction formulas}\label{subsec:induction-formula}

Let $H$ be a compact connected Lie group. Let $H\cdot a$ be a coadjoint orbit. Let $(N,\omega_N,\Phi_N)$ be an 
Hamiltonian $H$-manifold which is not assumed to be compact. But we assume that $\Phi_N$ is {\em proper} near 
$H\cdot a$: the pullback $\Phi^{-1}_N(\Ccal)$ is compact if $\Ccal\subset\hgot^*$ is a small enough compact 
invariant  neighborhood of $H\cdot a$.

Let $H_a$ be the stabilizer of $a\in\hgot^*$, and let $Y_a$ be a symplectic slice near $H\cdot a$: $Y_a$ is 
a $H_a$-invariant symplectic manifold of $N$ such that $\Phi_N(Y_a)\subset\hgot_a^*$ and such that $H\times_{H_a} Y_a$ 
is  diffeomorphic to an invariant open neighborhood of $\Phi^{-1}_N(H\cdot a)$. We will work with the following moment map on $Y_a$:
$$
\Phi_{Y_a}=\Phi_N\vert_{Y_a}-a.
$$

Let $N\times \overline{H\cdot a}$ be the Hamiltonian $H$-manifold, with moment map $\Phi(n,\xi)=\Phi_N(n)-\xi$. Let 
$$
RR^H_0(N\times \overline{H\cdot a},\ -\ )
$$
be the Riemann-Roch character localized near the compact subset $\Phi^{-1}(0)\subset N\times \overline{H\cdot a}$. 
Let 
$$
RR^{H_a}_0(Y_a,\ -\ )
$$
be the Riemann-Roch character localized near the compact subset $\Phi_{Y_a}^{-1}(0)=\Phi^{-1}_N(a)\subset Y_a$.

Let $\indHa: \Rfor(H_a)\to \Rfor(H)$ be the induction map. If $E$ and $F$ are respectively $H$-equivariant complex 
vector bundles on $N$ and $H\cdot a$, we denote $E\boxtimes F$ their product. We have proved in \cite{pep-RR} 
(see also Proposition 4.13 in \cite{pep-ENS}) the following induction formula

\begin{prop}\label{prop:induction} For any equivariant complex vector bundles $E\to N$ and $F\to H\cdot a$, we have 
$$
RR^H_0(N\times \overline{H\cdot a},E\boxtimes F)
= \indHa\left[ 
RR^{H_a}_0(Y_a,E\vert_{Y_a}\otimes F\vert_{\{a\}} )\right].
$$
\end{prop}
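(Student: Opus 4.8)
The plan is to reduce the statement to a local computation near $\Phi^{-1}(0)$ by combining the excision property of the transversally elliptic index with the functoriality of that index under the induction $H_a\hookrightarrow H$. Write $\Phi(n,\xi)=\Phi_N(n)-\xi$ for the moment map of $N\times\overline{H\cdot a}$ and $\kappa$ for the associated Kirwan vector field. By excision, $RR^H_0(N\times\overline{H\cdot a},E\boxtimes F)$ is the index of the pushed symbol $\clif^\kappa_{E\boxtimes F}$ restricted to any relatively compact $H$-invariant open neighbourhood $\Ucal$ of $\Phi^{-1}(0)$. Since $\Phi(n,\xi)=0$ forces $\xi=\Phi_N(n)\in H\cdot a$, the graph map $n\mapsto(n,\Phi_N(n))$ identifies $\Phi^{-1}(0)$ with $\Phi_N^{-1}(H\cdot a)\simeq H\times_{H_a}\Phi_N^{-1}(a)$. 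Using the symplectic cross-section diffeomorphism $H\times_{H_a}Y_a\croc N$ onto a neighbourhood of $\Phi_N^{-1}(H\cdot a)$, the identification $H\cdot a\simeq H/H_a$, and the elementary $H$-equivariant diffeomorphism
$$
H\times_{H_a}\bigl(Y_a\times(H/H_a)\bigr)\ \xrightarrow{\ \sim\ }\ (H\times_{H_a}Y_a)\times(H/H_a),\qquad [h,y,gH_a]\longmapsto\bigl([h,y],\,hgH_a\bigr),
$$
one sees that $\Ucal$ may be taken of the form $H\times_{H_a}\mathcal{W}$, where $\mathcal{W}$ is a relatively compact $H_a$-invariant open neighbourhood of $\Phi_{Y_a}^{-1}(0)\times\{eH_a\}$ inside $Y_a\times(H/H_a)$, and $eH_a$ is an $H_a$-fixed point of $H/H_a$.

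Next I would transport $\clif^\kappa_{E\boxtimes F}\vert_\Ucal$ through this identification. It becomes the symbol induced, from $H_a$ to $H$, by the transversally elliptic symbol $\sigma$ on $\mathcal{W}$ which is the Kirwan-pushed Dolbeault symbol of $\mathcal{W}$ (for the $H_a$-moment map obtained from $\Phi$ followed by the projection $\kgot^*\to\hgot_{a}^{*}$) twisted by $(E\vert_{Y_a}\otimes F\vert_{\{a\}})\boxtimes\C$; here one uses that $\Phi$ restricted to $Y_a\times\{eH_a\}$ equals $\Phi_{Y_a}$ on the $Y_a$-factor and that the induced invariant almost complex structure restricts over $eH_a$ to the one on the coadjoint orbit. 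Because inducing from $H_a$ to $H$ commutes with the index map (Atiyah \cite{Atiyah74}; see also \cite{B-V.inventiones.96.1,B-V.inventiones.96.2,pep-vergne:bismut}), this gives
$$
RR^H_0(N\times\overline{H\cdot a},E\boxtimes F)\ =\ \indHa\Bigl(\indice^{H_a}_{\mathcal{W}}(\sigma)\Bigr).
$$

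It then remains to identify $\indice^{H_a}_{\mathcal{W}}(\sigma)$ with $RR^{H_a}_0(Y_a,E\vert_{Y_a}\otimes F\vert_{\{a\}})$. Linearising $H/H_a$ at $eH_a$ presents $\mathcal{W}$, $H_a$-equivariantly, as a neighbourhood of $\Phi_{Y_a}^{-1}(0)\times\{0\}$ in $Y_a\times\hgot_{a}^{\perp}$, with $\hgot_{a}^{\perp}\simeq T_a(H\cdot a)$ a ``cotangent type'' normal model. By multiplicativity of the transversally elliptic index, $\sigma$ splits, up to a homotopy through transversally elliptic symbols, into the Kirwan-pushed symbol of $(Y_a,\omega_{Y_a},\Phi_{Y_a})$ twisted by $E\vert_{Y_a}\otimes F\vert_{\{a\}}$ — whose index is $RR^{H_a}_0(Y_a,E\vert_{Y_a}\otimes F\vert_{\{a\}})$ — times a symbol in the normal $\hgot_{a}^{\perp}$-directions whose index is the trivial representation $\C$, the point being that the symplectic reduction at $0$ of this normal model is a single point. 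Chaining the two displays yields the proposition.

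The step I expect to be the main obstacle is this last one: splitting $\sigma$ as an external product up to a homotopy that remains transversally elliptic (so that the index is unchanged), together with the local model computation that the normal $\hgot_{a}^{\perp}$-directions contribute only $\C$. It is precisely at this stage that the properness of $\Phi_N$ near $H\cdot a$ is used, to guarantee that all the critical sets occurring stay in a fixed compact subset, away from $\partial\Ucal$, throughout the deformations.
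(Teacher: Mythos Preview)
The paper does not actually prove this proposition: immediately before the statement it says ``We have proved in \cite{pep-RR} (see also Proposition~4.13 in \cite{pep-ENS}) the following induction formula'', and no argument is given here. So there is no in-paper proof to compare your attempt against; the proposition is simply quoted from the author's earlier work.

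Your sketch is nonetheless a faithful outline of how that earlier proof goes: excision to a neighbourhood of $\Phi^{-1}(0)$, recognition of that neighbourhood as an induced space $H\times_{H_a}\Wcal$, Atiyah's induction $\K_{H_a}(\T_{H_a}\Wcal)\to\K_H(\T_H(H\times_{H_a}\Wcal))$ compatible with the index, and then a multiplicativity/Bott-type argument to peel off the normal $\hgot/\hgot_a$-directions. This is precisely the architecture of \cite{pep-RR}, Section~6, so you are not proposing a genuinely different route. The step you flag as delicate --- that the transversally elliptic symbol in the $\hgot/\hgot_a$-directions has index the trivial representation --- is indeed the point where the real work sits; in \cite{pep-RR} it is handled via the explicit Bott symbol on $\hgot/\hgot_a\simeq\T_a(H\cdot a)$ together with a homotopy of the Kirwan vector field, and properness near $H\cdot a$ is used exactly as you say, to keep the characteristic sets compact during that homotopy. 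One small correction of emphasis: rather than ``cotangent type'', the normal model here is the coadjoint-orbit direction $\overline{H\cdot a}$ linearised at $a$, and the trivial contribution comes from the Atiyah--Bott index of the equivariant Thom/Bott class on this vector space, not from a symplectic reduction being a point.
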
 

The last Proposition gives in particular  that
\begin{equation}\label{eq:induction}
\left[RR^H_0(N\times \overline{H\cdot a},E\boxtimes F)\right]^H
= \left[ 
RR^{H_a}_0(Y_a,E\vert_{Y_a}\otimes F\vert_{\{a\}} )\right]^{H_a}.
\end{equation}

\subsection{The torus case}\label{sec:torus-case}

Let $T$ be a compact torus, and let $(M,\omega,\Phi)$ be a compact Hamiltonian $T$-manifold 
which is Spin-prequantized by the data $(J,\tilde{L})$. We suppose that $J$ is compatible with $\omega$. 
The irreducible representation of $T$ is parametrized by the lattice $\what{T}\subset\tgot^*$: at each $\mu\in\what{T}$ we associate 
the one-dimensional representation $\C_\mu$. 

We write $\spinq^T(M)=\sum_{\mu\in\what{T}} \mm_\mu \C_\mu$, 
and one wants to show that the multiplicity $\mm_\mu$ is equal to the Spin-quantization of the 
(possibly singular) reduced space $M_\mu:=\Phi^{-1}(\mu)/T$.

We fix once for all $\mu\in\what{T}$. And we apply the Witten deformation procedure 
to the Hamiltonian $T$-manifold  $(M,\omega,\Phi-\mu)$ which is Spin-prequantized by $(J,\tilde{L}\otimes\C_{-\mu})$. 
We have 
$$
\mm_\mu=\sum_{\beta\in\Bcal^\mu}\left[RR^T_\beta(M,\tilde{L}\otimes\C_{-\mu})\right]^T
$$
where $\Bcal^\mu$ parametrizes the critical points of $\|\Phi-\mu\|^2$. Here the criterion (\ref{eq.RR-beta=0}) holds for 
any non-zero $\beta$ since the Lie algebra $\tgot$ is abelian. We have then
$$
\mm_\mu=\left[RR^T_0(M,\tilde{L}\otimes\C_{-\mu})\right]^T.
$$
In particular  $\mm_\mu=0$ if $\mu\notin \Phi(M)$. When $\mu\in\Phi(M)$, we consider a small neighborhood $U$ of 
$\Phi^{-1}(\mu)\subset M$ so that $\overline{U}\cap \Cr(\|\Phi-\mu\|^2)= \Phi^{-1}(\mu)$. We know then that 
\begin{equation}\label{eq:RR-loc-U}
\mm_\mu=\left[RR^{T}_{\Phi-\mu}(U,\tilde{L}\vert_U\otimes\C_{-\mu})\right]^{T}.
\end{equation}

\subsubsection{First case: $\mu$ is a regular value of $\Phi$} 

We consider the orbifold reduced space $M_{\mu}=\Phi^{-1}(\mu)/T$ which is 
equipped with a canonical symplectic form $\omega_\mu$. Let $RR(M_{\mu},-)$ be the Riemann-Roch character attached to 
a compatible almost complex struture. We prove in \cite{pep-RR} that for any complex vector bundle $E\to U$
\begin{equation}\label{eq:RR-reduit}
\left[RR^{T}_{\Phi-\mu}(U,E)\right]^{T}=RR(M_{\mu},\Ecal)
\end{equation}
where $\Ecal= E\vert_{\Phi^{-1}(\mu)}/T$ is the induced orbifold bundle on $M_{\mu}$. 
If we take $E=\tilde{L}\vert_U\otimes\C_{-\mu}$ on sees (thanks to Lemma \ref{lem:restriction-spin-0}) that 
$$
\tilde{\Lcal}_\mu=(\tilde{L}\vert_{\Phi^{-1}(\mu)}\otimes\C_{-\mu})/T
$$
is an orbifold line bundle which Spin-prequantizes $(M_\mu,\omega_\mu)$, and (\ref{eq:RR-reduit}) gives together 
with (\ref{eq:RR-loc-U}) that
$$
\mm_\mu=RR(M_{\mu},\tilde{\Lcal}_\mu)=\spinq(M_\mu).
$$

\subsubsection{Second case : $\mu$ is a not (necessarilly) a regular value of $\Phi$} 

Let $\pf\subset\tgot^*$ be the rationnal vector subspace generated by $\{a-b\ \vert a,b\in\Phi(M)\}$. 
We work here with a weight $\mu\in\Phi(M)$ so that the polytope $\Phi(M)$ lives in the affine subspace $\mu+\pf$.
Let $\tgot_\Delta\subset \tgot$ be the subspace orthogonal to $\Delta$, and let $T_\Delta\subset T$ be the corresponding subtorus.

\begin{lem}
The group $T_\p$ acts trivially on $M$ and on the line bundle $\tilde{L}\otimes\C_{-\mu}$.
\end{lem}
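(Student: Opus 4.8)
The plan is to mimic the proof of Lemma~\ref{lem:Z-sigma-delta-action}, which is the exact analogue in the general reductive setting; here the situation is even simpler because $T$ is abelian so $\rho-\rho_\tau$ plays no role and $\Phi$ itself is already the moment map. First I would observe that, by definition of $\p$, for every $X\in\tgot_\p$ the function $m\mapsto\langle\Phi(m),X\rangle$ is constant on $M$ (equal to $\langle\mu,X\rangle$ since $\mu\in\Phi(M)$ and $\Phi(M)\subset\mu+\p$). From the Hamiltonian relation $\iota(X_M)\omega=-d\langle\Phi,X\rangle=0$ and the nondegeneracy of $\omega$ we get $X_M\equiv 0$, hence $T_\p$ acts trivially on $M$.

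Next I would transfer this to the line bundle. Let $L_{2\omega}$ be the Kostant-Souriau line bundle over $(M,2\omega,2\Phi)$ with $\tilde L^2=L_{2\omega}\otimes\kappa_J$ (the compatible $J$ being fixed as in the definition of Spin-prequantization). The Kostant formula~(\ref{eq:kostant-L}) gives, for $X\in\tgot_\p$, the operator identity $\Lcal(X)-\nabla_{X_M}=2i\langle\Phi,X\rangle$ on sections of $L_{2\omega}$; since $X_M=0$ and $\langle\Phi,X\rangle\equiv\langle\mu,X\rangle$, this reads $\Lcal(X)=2i\langle\mu,X\rangle$, i.e. $T_\p$ acts on $L_{2\omega}$ through the character $\C_{2\mu}$. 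Equivalently $T_\p$ acts trivially on $L_{2\omega}\otimes\C_{-2\mu}=(\tilde L\otimes\C_{-\mu})^2\otimes\kappa_J^{-1}$. Because $T_\p$ already acts trivially on $M$, it acts trivially on $\T M$ and hence on $\kappa_J$; therefore it acts trivially on $(\tilde L\otimes\C_{-\mu})^2$. A connected torus acting trivially on the square of an equivariant line bundle acts trivially on the line bundle itself (the action is given by a character, which is then $2$-divisible in $\what{T_\p}$, forcing it to be trivial since $\what{T_\p}$ is torsion-free), so $T_\p$ acts trivially on $\tilde L\otimes\C_{-\mu}$.

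The only mild subtlety, and the step I would be most careful about, is the passage from "$T_\p$ acts trivially on the square" to "$T_\p$ acts trivially on the bundle": this is exactly the torsion-freeness argument just indicated and is where one genuinely uses that $T_\p$ is a torus (it would fail for a finite group with $2$-torsion). Everything else is a direct transcription of the argument already given for $Z_\sigma^\p$ in Lemma~\ref{lem:Z-sigma-delta-action}, so I would keep the write-up short and refer back to it.

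\begin{proof}
By definition of $\p$, for any $X\in\tgot_\p$ the function $m\in M\mapsto\langle\Phi(m),X\rangle$ is constant, equal to $\langle\mu,X\rangle$ (recall $\mu\in\Phi(M)$). Then $\iota(X_M)\omega=-d\langle\Phi,X\rangle=0$, so $X_M\equiv 0$ and $T_\p$ acts trivially on $M$.

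Let $L_{2\omega}$ be the Kostant-Souriau line bundle over $(M,2\omega,2\Phi)$ with $\tilde{L}^2=L_{2\omega}\otimes\kappa_J$, where $J$ is a compatible invariant almost complex structure. On sections of $L_{2\omega}$ we have, for $X\in\tgot_\p$, the identity of operators $\Lcal(X)-\nabla_{X_M}=2i\langle\Phi,X\rangle=2i\langle\mu,X\rangle$. Since $X_M=0$, this gives $\Lcal(X)=2i\langle\mu,X\rangle$, so $T_\p$ acts trivially on $L_{2\omega}\otimes\C_{-2\mu}=(\tilde{L}\otimes\C_{-\mu})^2\otimes\kappa_J^{-1}$. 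As $T_\p$ acts trivially on $M$, it acts trivially on $\T M$, hence on $\kappa_J$; therefore it acts trivially on $(\tilde{L}\otimes\C_{-\mu})^2$. The $T_\p$-action on the line bundle $\tilde{L}\otimes\C_{-\mu}$ is given by a character of $T_\p$ whose square is trivial; since $\what{T_\p}$ is torsion-free this character is trivial, so $T_\p$ acts trivially on $\tilde{L}\otimes\C_{-\mu}$.
\end{proof}
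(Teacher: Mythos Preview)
Your proof is correct and follows exactly the approach the paper intends: the paper's own proof of this lemma consists solely of the sentence ``See the proof of Lemma~\ref{lem:Z-sigma-delta-action}'', and your argument is precisely the abelian transcription of that proof. You are in fact slightly more careful than the paper, which in Lemma~\ref{lem:Z-sigma-delta-action} passes silently from ``trivial action on $(\tilde{L}_\sigma\otimes\C_{-\mu})^2$'' to ``trivial action on $\tilde{L}_\sigma\otimes\C_{-\mu}$'' without spelling out the torsion-freeness of the character lattice; your explicit justification of this step is a welcome addition.
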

\begin{proof} 
See the proof of Lemma \ref{lem:Z-sigma-delta-action}.
\end{proof}

Let $T'\subset T$ be another subtorus such that $T=T_\p\times T'$: the dual of its Lie algebra $\tgot'$ is identified with 
$\pf\subset\tgot^*$. We look  now at $(M,\omega)$ as a Hamiltonian $T'$-manifold with moment map 
$$
\Phi':=\Phi-\mu: M\longrightarrow \pf=(\tgot')^*
$$
The $T'$-equivariant line bundle $\tilde{L}':=\tilde{L}\otimes\C_{-\mu}$ Spin-prequantizes the Hamiltonian $T'$-manifold  
$(M,\omega,\Phi')$. Let $U$ be a small neighborhood of $\Phi'^{-1}(0)$ in $M$. The generalized character 
$RR^{T}_{\Phi-\mu}(U,\tilde{L}\vert_U\otimes \C_{-\mu})$ belongs to $\Rfor(T')$ 
and corresponds to the localized Riemann-Roch character 
$$
RR^{T'}_{\Phi'}(U,\tilde{L}'\vert_U).
$$ 

We deform the moment map $\Phi'$ in $\Phi'-\esp$ where $\esp$ is a small element in $\pf$. We have proved in 
\cite{pep-ENS}[Proposition 4.14] the following 

\begin{lem}\label{lem-1}
$\bullet$  If $\esp$ is small enough, the critical set of $\|\Phi'-\esp\|^2$ does not intersect $\partial U$, so that 
the localized Riemann-Roch character $RR^{T'}_{\Phi'-\esp}(U,-)$ is well defined.

$\bullet$ We have $RR^{T'}_{\Phi'}(U,\tilde{L}'\vert_U)=RR^{T'}_{\Phi'-\esp}(U,\tilde{L}'\vert_U)$ if $\esp$ is small enough. 
\end{lem}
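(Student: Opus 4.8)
The plan is to prove Lemma \ref{lem-1} by studying the behavior of the critical set of $\|\Phi'-\esp\|^2$ as $\esp\to 0$, using the special structure of the torus action $\Phi': M\to\pf=(\tgot')^*$ for which $\Phi(M)$ is a full-dimensional polytope in $\mu+\pf$ (in the dual of $\tgot'$). The key geometric input is a standard but crucial fact from the symplectic-convexity/Kirwan-stratification theory for torus actions: near $0$, the critical values of $\|\Phi'-\esp\|^2$ are controlled by the face structure of the polytope $\Phi'(M)-\esp$, and the critical set $\Cr(\|\Phi'-\esp\|^2)$ varies in a semicontinuous way with $\esp$. Concretely, writing $\Cr(\|\Phi'-\esp\|^2)=\bigcup_{\beta} C^{\esp}_\beta$ where $\beta$ ranges over the finite set $\Bcal^{\esp}\subset\tgot'$ of "critical vectors," one knows that for $\esp$ near $0$ each $\beta$ lies in the (finite) set of faces-directions of the polytope, independent of $\esp$; this is because $\beta$ is obtained as the point of $\Phi'(M)-\esp$ closest to the origin inside the affine span of some face of the polytope.

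\textbf{First step.} I would fix the polytope $P:=\Phi'(M)\subset\pf$ and recall that, since $T'$ acts with $\Phi'$ having $P$ as a full-dimensional polytope, the set of possible $\beta\in\Bcal^{\esp}$ for $\esp$ small is contained in a finite set of vectors determined by the (finitely many) affine hyperplanes spanned by faces of $P$. The origin $0$ lies in the relative interior of $P$ (this is exactly the hypothesis $\mu\in\Phi(M)$ combined with $\pf$ being the span of differences, so $P$ is full-dimensional in $\mu+\pf$; after the shift by $\mu$, $0$ is interior). Then for $\|\esp\|$ small, $0$ still lies in the interior of $P-\esp$, so $0\in\Bcal^{\esp}$, and every other $\beta\in\Bcal^{\esp}$ has $\|\beta\|$ bounded below by a constant $\delta>0$ independent of $\esp$ (the distance from an interior point to the affine span of a proper face of $P$ is bounded below uniformly for small perturbations).

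\textbf{Second step.} From this uniform lower bound on $\|\beta\|$ for $\beta\neq 0$ in $\Bcal^{\esp}$, I would deduce the first bullet: the components $C^{\esp}_\beta$ with $\beta\neq 0$ stay in $M\setminus\overline U$ for $\esp$ small (shrinking $U$ if necessary so that $\overline U\subset\Phi'^{-1}(B(0,\delta/2))$ say), while $C^{\esp}_0=\Phi'^{-1}(\esp)\cap M^{?}$—more precisely the critical component over $0$—converges to $\Phi'^{-1}(0)\cap\Cr(\|\Phi'\|^2)$ and hence stays in $U$. So $\partial U\cap\Cr(\|\Phi'-\esp\|^2)=\emptyset$, which makes $RR^{T'}_{\Phi'-\esp}(U,-)$ well-defined via the excision property recalled after diagram (\ref{indice.generalise}).

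\textbf{Third step, the main obstacle.} The equality $RR^{T'}_{\Phi'}(U,\tilde L'|_U)=RR^{T'}_{\Phi'-\esp}(U,\tilde L'|_U)$ is the genuine content, and this is where I expect the real work. The idea is a homotopy/deformation argument: the pushed symbols $\clif^{\kappa_t}_{\tilde L'}|_U$ attached to the family of moment maps $\Phi'_t:=\Phi'-t\esp$, $t\in[0,1]$, give a continuous family of transversally elliptic symbols on $\T_{T'}U$ provided the characteristic set $\Char\cap\T_{T'}U\simeq\Cr(\|\Phi'_t\|^2)\cap U$ stays compact and inside $U$ (i.e.\ does not meet $\partial U$) for all $t$. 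By the uniform lower bound argument of Step 2, applied along the whole segment $\{t\esp: t\in[0,1]\}$ of small perturbations rather than just to the endpoint, this holds; hence the index is constant in $t$ by homotopy invariance of the equivariant index of transversally elliptic symbols (Atiyah–Singer \cite{Atiyah74}). The delicate point is checking that the "stay inside $U$" condition genuinely holds uniformly in $t$—this requires the polytope-based estimate to be robust along the segment, and one must be careful that the closest-point-to-a-face vectors $\beta$ do not degenerate or merge with $0$; this is guaranteed because $0$ stays in the interior of $P-t\esp$ for all $t\in[0,1]$ when $\|\esp\|$ is small. Since this is precisely \cite{pep-ENS}[Proposition 4.14], I would cite it for the details, sketching the homotopy argument as above and emphasizing that the torus hypothesis (so that criterion (\ref{eq.RR-beta=0}) is vacuous and $\pf$ gives a full-dimensional polytope) is what makes the closest-point geometry clean.
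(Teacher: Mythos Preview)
The paper does not prove this lemma; it simply cites \cite{pep-ENS}[Proposition 4.14]. Your homotopy argument in Step~3 is the correct core idea, and it is essentially what that cited proposition does: the family of pushed symbols $\clif^{\kappa_{t\esp}}_{\tilde L'}\vert_U$, $t\in[0,1]$, is a homotopy of $T'$-transversally elliptic symbols on $U$ once you know $\Cr(\|\Phi'-t\esp\|^2)\cap\partial U=\emptyset$ for all $t$, and index invariance under such homotopies finishes.

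There is, however, a genuine gap in your justification of that boundary condition. You claim that $0$ lies in the relative interior of $P:=\Phi'(M)$, arguing that $\mu\in\Phi(M)$ together with $\pf$ being the span of differences forces this. It does not: the hypothesis is only $\mu\in\Phi(M)$, so after the shift one has $0\in P$, but $\mu$ could perfectly well lie on the boundary of $\Phi(M)$ (for instance a vertex), in which case $0\in\partial P$. Your subsequent reasoning in Steps~1 and~2 (the lower bound $\|\beta\|\geq\delta$ for nonzero $\beta\in\Bcal^\esp$ via distance to proper faces, and ``$0$ stays in the interior of $P-t\esp$'') then breaks down.

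The fix is both simpler and more robust than the polytope argument: the Kirwan vector field $\kappa^\esp(m)=(\Phi'(m)-\esp)_M(m)$ depends continuously on $(\esp,m)$, and by the choice of $U$ one has $\kappa^0\neq 0$ on the compact set $\partial U$. Hence $\|\kappa^0\|\geq c>0$ on $\partial U$, and by continuity $\|\kappa^{t\esp}\|\geq c/2$ on $\partial U$ for all $t\in[0,1]$ once $\|\esp\|$ is small enough. This single compactness argument gives the first bullet and, applied uniformly along the segment, exactly the input your Step~3 needs; no interior-point assumption and no face analysis are required.
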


Now we are left to the computation of $\mm_\mu=\left[RR^{T'}_{\Phi'-\esp}(U,\tilde{L}'\vert_U)\right]^{T'}$ when  
$\esp\in\pf$ is small enough. 
We start with the decomposition 
$$
RR^{T'}_{\Phi'-\esp}(U,\tilde{L}'\vert_U)=\sum_{\beta\in\Bcal_\esp}RR^{T'}_{\Phi'-\esp,\beta}(U,\tilde{L}'\vert_U)
$$ 
where $RR^{T'}_{\Phi'-\esp,\beta}(U,-)$ denotes the Riemann-Roch charcater localized near 
the compact subset $U^\beta\cap(\Phi')^{-1}(\beta+\esp)$.  We have proved in \cite{pep-ENS}[Lemma 4.16] the following

\begin{lem}\label{lem-2} 
If $\esp$ is small enough we have $\left[RR^{T'}_{\Phi'-\esp,\beta}(U,\tilde{L}'\vert_U)\right]^{T'}=0$ 
when $\beta\neq 0$.
\end{lem}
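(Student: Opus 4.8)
The plan is to prove Lemma \ref{lem-2} by invoking the vanishing criterion of Proposition \ref{prop:RR-beta=0}, applied to the action of the torus $T'$ on the open set $U$ with the deformed moment map $\Phi'-\esp$. First I would recall that, by the choice of $U$ as a small neighbourhood of $(\Phi')^{-1}(0)$ with $\overline{U}\cap\Cr(\|\Phi'\|^2)=(\Phi')^{-1}(0)$, the only critical component of $\|\Phi'\|^2$ inside $U$ sits over $0$; hence for $\esp$ small the critical components $U^\beta\cap(\Phi')^{-1}(\beta+\esp)$ of the deformed square moment map that meet $U$ correspond to elements $\beta$ of bounded norm, and moreover $\Phi'(m)=\beta+\esp$ is small for any such $m$ in a localizing component. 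The point of the argument is that the nonzero $\beta$'s occurring here are forced to be \emph{small but nonzero}, and for such $\beta$ the quadratic term $\|\beta+\esp\|^2=\|\Phi'(m)\|^2$ in the criterion is tiny, so it is the trace term $\frac12\tr_{\Phi'(m)}|\T_m M|$ that must do the work.

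Next I would examine $\tr_{\beta}|\T_m M|$ as $\beta\to 0$. Writing $\beta$ in the direction of a one-parameter subtorus $\tore_\beta\subset T'$, the infinitesimal weights $\alpha^m_i$ on $\T_m M/\T_m M^\beta$ that enter $\tr_\beta|\T_m M|=\sum_i|\langle\alpha^m_i,\beta\rangle|$ are weights that are \emph{nonzero} on $\beta$; since the set of weights occurring on $M$ is finite and $T'$-action related data is locally constant, there is a uniform lower bound: whenever $\tore_\beta$ is a subtorus of $T'$ of a fixed type and $m\in M^\beta$ with $\T_m M^\beta\neq \T_m M$, the pairing $\tr_\beta|\T_m M|$ is bounded below by $c\|\beta\|$ for a constant $c>0$ depending only on $M$ and the weight data. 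In particular there is no continuous family of $m\in M^{\beta}$ with $\beta\neq 0$ on which $M^\beta=M$ locally near such an $m$; on the critical locus over $\beta+\esp\neq 0$ the normal weights are genuinely nonzero. Since $\Phi'(m)=\beta+\esp$ stays in the bounded set $\pf$ and $T'$ is a torus (so $\tr_{\Phi'(m)}|\tgot'|=0$, as the adjoint action is trivial), the criterion \eqref{eq.RR-beta=0.bis} reduces to
$$
\|\Phi'(m)\|^2+\tfrac12\tr_{\Phi'(m)}|\T_m M|>0,
$$
which holds automatically as long as $\Phi'(m)\neq 0$, i.e. as long as the localizing component is genuinely at a nonzero value of the deformed moment map. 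Taking $\esp$ generic ensures $\beta+\esp\neq 0$ for all the finitely many relevant $\beta\neq 0$, so the hypothesis of Proposition \ref{prop:RR-beta=0} (applied with $K=T'$) is satisfied and $[RR^{T'}_{\Phi'-\esp,\beta}(U,\tilde{L}'\vert_U)]^{T'}=0$.

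The one genuinely delicate point — which is why this is stated as a separate lemma rather than an immediate corollary — is the \emph{uniformity in $\esp$}: one must check that the finite set $\Bcal_\esp$ of localizing data inside $U$, and in particular which components $U^\beta\cap(\Phi')^{-1}(\beta+\esp)$ actually lie in $U$, stabilizes for $\esp$ small enough, and that along each such component $\Phi'(m)$ is bounded away from $0$ once $\esp\neq 0$ is generic. This is handled exactly as in \cite{pep-ENS}[Lemma 4.16]: since $\Cr(\|\Phi'\|^2)\cap U$ is compact and contained in $(\Phi')^{-1}(0)$, a compactness/continuity argument shows that for $\|\esp\|$ below a threshold the deformed critical set in $U$ is a small perturbation of the undeformed one, so that every $m$ in a nonzero-$\beta$ component has $\Phi'(m)=\beta+\esp$ in a small punctured neighbourhood of $0$ — in particular nonzero for generic $\esp$ — while $m$ ranges over a compact set on which the normal-weight data is locally constant. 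The main obstacle, then, is not the inequality itself (which is nearly trivial once $\tgot'$ is abelian), but setting up this uniform control of the localizing components; granting \cite{pep-ENS}[Proposition 4.14] (our Lemma \ref{lem-1}) this is routine, and the proof concludes by quoting Proposition \ref{prop:RR-beta=0}.
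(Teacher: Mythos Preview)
Your overall strategy---apply the vanishing criterion of Proposition~\ref{prop:RR-beta=0} and use that $\tr_\beta|\tgot'|=0$ for a torus---is the right one, and is essentially what the cited reference \cite{pep-ENS}[Lemma~4.16] does (the paper itself gives no proof here). But the write-up is muddled on the central computation, and there is a genuine gap you do not name.

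If Proposition~\ref{prop:RR-beta=0} applied literally with moment map $\Phi'-\esp$, the criterion would read $\|\beta\|^2+\tfrac12\tr_\beta|\T_mM|>0$, trivially true for $\beta\neq 0$; none of your discussion of the trace term, of $\beta$ being small, or of $\beta+\esp\neq 0$ would be needed. The reason this direct invocation is not legitimate is that Proposition~\ref{prop:RR-beta=0} is stated for a line bundle that Spin-prequantizes \emph{with respect to the moment map defining the Kirwan vector field}, and $\tilde L'$ Spin-prequantizes $(M,\omega,\Phi')$, not $(M,\omega,\Phi'-\esp)$. Redoing the weight computation that underlies the criterion, the $\beta$-weight of $\tilde L'\otimes\det\Ncal^{+,\beta}$ at $m\in M^\beta\cap(\Phi')^{-1}(\beta+\esp)$ is
\[
\langle\Phi'(m),\beta\rangle+\tfrac12\tr_\beta|\T_mM|
=\|\beta\|^2+\langle\esp,\beta\rangle+\tfrac12\tr_\beta|\T_mM|,
\]
not $\|\Phi'(m)\|^2+\tfrac12\tr_{\Phi'(m)}|\T_mM|$ as you write (and note that $\tr_{\Phi'(m)}|\T_mM|$ is not even defined, since $m$ need not be fixed by $\beta+\esp$). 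It is the cross term $\langle\esp,\beta\rangle$, not a possible vanishing of $\|\beta+\esp\|^2$, that makes the lemma non-immediate.

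Your lower bound $\tr_\beta|\T_mM|\geq c\|\beta\|$---valid because the choice of $T'$ makes the action effective---is exactly the estimate that controls this error: for $\|\esp\|<c/2$ one gets $\|\beta\|^2+\langle\esp,\beta\rangle+\tfrac12\tr_\beta|\T_mM|\geq\|\beta\|\bigl(\|\beta\|-\|\esp\|+\tfrac{c}{2}\bigr)>0$. So the ingredients are all present in your sketch; what is missing is a clean separation of the two moment maps, the correct inner product in place of the squared norm, and an explicit acknowledgement that the prequantum mismatch is the actual obstacle.
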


At $\esp\in\pf$ small enough and generic we associate the orbifold $M_{\mu+\esp}=$ \break $\Phi^{-1}(\mu+\esp)/T'$ which 
is equipped with the orbifold line bundle 
$$
\tilde{\Lcal}_{\mu+\esp}= \left(\tilde{L}\vert_{\Phi^{-1}(\mu+\esp)}\otimes\C_{-\mu}\right)/T'.
$$
Let $RR(M_{\mu+\esp},-)$ be the Riemann-Roch map associated to a compatible almost complex structure. 
If we use (\ref{eq:RR-reduit}) together with the Lemmas \ref{lem-1} and \ref{lem-2} we get 

\begin{theo}\label{theo:QR-abelien} 
The multiplicity $\mm_\mu$ is equal to the Riemann-Roch number \break 
$RR(M_{\mu+\esp},\tilde{\Lcal}_{\mu+\esp})\in\Z$ where $\esp\in\pf$ is small and generic. 
\end{theo}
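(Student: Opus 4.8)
The plan is to assemble Theorem~\ref{theo:QR-abelien} from the three ingredients that have just been set up: the localization formula~(\ref{eq:RR-loc-U}) expressing $\mm_\mu$ as the invariant part of a Riemann--Roch character localized near $\Phi^{-1}(\mu)$, the deformation invariance of Lemma~\ref{lem-1}, the vanishing of the off-zero contributions in Lemma~\ref{lem-2}, and the Kirwan-type identification~(\ref{eq:RR-reduit}) between the invariant part of a localized Riemann--Roch character and a Riemann--Roch number on the reduced orbifold. The only subtlety is that $\mu$ need not be a regular value of $\Phi$, which is precisely why we pass to the subtorus $T'$ and the shifted moment map $\Phi'=\Phi-\mu$ taking values in $\pf=(\tgot')^*$, and then deform by a small generic $\esp\in\pf$ so that $\mu+\esp$ \emph{is} a regular value of the $T'$-moment map.

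First I would record that, by~(\ref{eq:RR-loc-U}), $\mm_\mu=[RR^{T}_{\Phi-\mu}(U,\tilde L\vert_U\otimes\C_{-\mu})]^{T}$ for a small invariant neighborhood $U$ of $\Phi^{-1}(\mu)$ with $\overline U\cap\Cr(\|\Phi-\mu\|^2)=\Phi^{-1}(\mu)$; in particular $\mm_\mu=0$ when $\mu\notin\Phi(M)$, so we may assume $\mu\in\Phi(M)$. Since the subtorus $T_\p$ acts trivially on $M$ and on $\tilde L\otimes\C_{-\mu}$ (the Lemma following Lemma~\ref{lem:Z-sigma-delta-action}), the character $RR^{T}_{\Phi-\mu}(U,\tilde L\vert_U\otimes\C_{-\mu})$ is carried by $T'$ and equals $RR^{T'}_{\Phi'}(U,\tilde L'\vert_U)$ with $\Phi'=\Phi-\mu$ and $\tilde L'=\tilde L\otimes\C_{-\mu}$. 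Next I would invoke Lemma~\ref{lem-1} to replace $\Phi'$ by $\Phi'-\esp$ for $\esp\in\pf$ small enough, so that $\mm_\mu=[RR^{T'}_{\Phi'-\esp}(U,\tilde L'\vert_U)]^{T'}$, and then expand this over $\Bcal_\esp$ as $\sum_{\beta\in\Bcal_\esp}RR^{T'}_{\Phi'-\esp,\beta}(U,\tilde L'\vert_U)$. By Lemma~\ref{lem-2}, for $\esp$ small the invariant part of each $\beta\neq0$ term vanishes, leaving $\mm_\mu=[RR^{T'}_{\Phi'-\esp,0}(U,\tilde L'\vert_U)]^{T'}$, i.e. only the piece localized near $(\Phi')^{-1}(\esp)=\Phi^{-1}(\mu+\esp)$ survives.

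Finally, choosing $\esp\in\pf$ generic (so that $\mu+\esp$ is a regular value of the $T'$-moment map $\Phi$, equivalently $\esp$ is a regular value of $\Phi'$), the reduced space $M_{\mu+\esp}=\Phi^{-1}(\mu+\esp)/T'$ is a symplectic orbifold, and the Kirwan localization identity~(\ref{eq:RR-reduit}) applied with $E=\tilde L'\vert_U$ gives $[RR^{T'}_{\Phi'-\esp}(U,\tilde L'\vert_U)]^{T'}=RR(M_{\mu+\esp},\tilde\Lcal_{\mu+\esp})$, where $\tilde\Lcal_{\mu+\esp}=(\tilde L\vert_{\Phi^{-1}(\mu+\esp)}\otimes\C_{-\mu})/T'$ is the induced orbifold line bundle. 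Combining the displayed equalities yields $\mm_\mu=RR(M_{\mu+\esp},\tilde\Lcal_{\mu+\esp})$, which is the assertion; note in passing that the independence of this integer on the generic small $\esp$ is then automatic, since the left side $\mm_\mu$ does not depend on $\esp$ at all.

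The main obstacle is really bookkeeping rather than a new idea: one must make sure the neighborhood $U$ is chosen once and for all so that it simultaneously works for every small $\esp$ in Lemmas~\ref{lem-1} and~\ref{lem-2} (this is exactly the content of the first bullet of Lemma~\ref{lem-1}, which guarantees $\Cr(\|\Phi'-\esp\|^2)\cap\partial U=\emptyset$ for $\esp$ small), and that the passage from the $T$-equivariant picture to the $T'$-equivariant picture is legitimate, which it is because $T_\p$ acts trivially on all the relevant data so that taking $[\,\cdot\,]^{T}$ is the same as taking $[\,\cdot\,]^{T'}$. Everything else is a direct concatenation of results already proved in the excerpt or in~\cite{pep-RR,pep-ENS}.
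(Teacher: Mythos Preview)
Your proposal is correct and follows exactly the approach of the paper: the theorem is obtained by concatenating (\ref{eq:RR-loc-U}), the passage from $T$ to $T'$ via the triviality of the $T_\p$-action, Lemma~\ref{lem-1}, Lemma~\ref{lem-2}, and finally (\ref{eq:RR-reduit}) applied at the regular value $\esp$. Your observation that the independence of $RR(M_{\mu+\esp},\tilde\Lcal_{\mu+\esp})$ on the small generic $\esp$ is automatic (since $\mm_\mu$ is independent of $\esp$) is also the paper's own remark immediately following the statement.
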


We prove here that the quantity $RR(M_{\mu+\esp}, \tilde{\Lcal}_{\mu+\esp})$ does not depend of the 
choice of $\esp$ small and generic: it is the definition of the Spin quantization, denoted $\spinq(M_\mu)$, of the 
(possibly singular) reduced space $M_\mu$.

\subsubsection{Proof of Theorem \ref{theo:quant-M-sigma-mu}}

The same kind of proof work for Theorem \ref{theo:quant-M-sigma-mu}. We consider an invariant  relatively compact neighborhood 
$U_{\sigma,\mu}$ of $\Phi_\sigma^{-1}(\mu)=$ \break $\Phi^{-1}(\mu+\rho-\rho_\sigma)$ in the slice $Y_\sigma$ so that 
$\Cr(\|\Phi_\sigma-\mu \|^2)\cap \overline{U_{\sigma,\mu}}=\Phi_\sigma^{-1}(\mu)$. Thanks to Lemmas \ref{lem-1} and \ref{lem-2}, 
we know that the Riemann-Roch character 
$$
RR^{Z_\sigma'}_{\Phi_\sigma-\mu-\esp}(U_{\sigma,\mu},\tilde{L}')\in\Rfor(Z_\sigma')
$$
are well defined for $\esp\in\pf$ small enough, and they do not depend of the choice of $\esp$. If $\esp_1,\esp_2\in\pf$ are small 
enough regular values of $\Phi_\sigma-\mu$ we get thanks to (\ref{eq:RR-reduit}) that 
\begin{eqnarray*}
RR(M_{\mu+\esp_1}^\sigma,\tilde{\Lcal}^\sigma_{\mu+\esp_1})&=&
\left[RR^{Z_\sigma'}_{\Phi_\sigma-\mu-\esp_1}(U_{\sigma,\mu},\tilde{L}')\right]^{Z_\sigma'}\\
&=&
\left[RR^{Z_\sigma'}_{\Phi_\sigma-\mu-\esp_2}(U_{\sigma,\mu},\tilde{L}')\right]^{Z_\sigma'}\\
&=& 
RR(M_{\mu+\esp_2}^\sigma,\tilde{\Lcal}^\sigma_{\mu+\esp_2}).
\end{eqnarray*}

\section{Proof of Theorem \ref{theo:QR-spin}}

Let $(M,\omega,\Phi)$ be a compact Hamiltonian $K$-manifold which is $\spin$ prequantized. 
 Let $\sigma$ be the smallest face of the Weyl chamber so that 
 $\Phi(M)\cap\tgot^*_+\subset\overline{\sigma}$. Let $\mu$ be a dominant weight, and let 
$\mm_\mu$ be the multiplicity of $V_\mu^K$ in $\spinq^K(M)$.

Let $\Ocal_\mu$ be the coadjoint orbit $K\cdot(\mu+\rho)$. Since the dual representation 
$(V_\mu^K)^*$ can be realized\footnote{$\overline{\Ocal_\mu}$ is the coadjoint orbit $\Ocal_\mu$ with the opposite 
symplectic structure.} as $\spinq^K(\overline{\Ocal_\mu})$, we know by the shifting trick that 
$$
\mm_\mu= \left[\spinq^K(M\times \overline{\Ocal_\mu})\right]^K.
$$

Now  we work with the Hamiltonian $K$-manifold $N=M\times\overline{\Ocal_\mu}$ with moment 
map $\Phi_N(m,\xi)=\Phi(m)-\xi$. The Witten's deformation on $N$ gives
$\spinq^K(M\times \overline{\Ocal_\mu})=
\sum_{\beta\in\Bcal^\mu}\Qcal^K_\beta(M\times \overline{\Ocal_\mu})$ where $\Bcal^\mu$ is a finite set parametrizing 
$\Cr(\|\Phi_N\|^2)$.
We have then 
\begin{equation}\label{eq:witten-beta}
\mm_\mu= \sum_{\beta\in\Bcal^\mu}\left[\Qcal^K_\beta(M\times \overline{\Ocal_\mu})\right]^K.
\end{equation}
 We remark that $0$ does not appears in $\Bcal^\mu$ when 
$\sigma\neq \tgot^*_+$, since $\mu+\rho\notin\Phi(M)$.

The main point of this section is the following 

\medskip

\begin{prop}\label{prop:key-identity}
$\bullet$ If $\mu\notin\overline{\sigma}$, the identity (\ref{eq.RR-beta=0.bis}) holds on $C_\beta$ 
for any $\beta\in \Bcal^\mu$.  Hence $\mm_\mu=0$.

$\bullet$ If $\mu\in\overline{\sigma}$, the identity (\ref{eq.RR-beta=0.bis}) holds on $C_\beta$ for any $\beta\neq -\rho_\sigma$. 
Then 
$$
\mm_\mu=\left[\Qcal^K_{-\rho_\sigma}(M\times \overline{\Ocal_\mu})\right]^K.
$$
\end{prop}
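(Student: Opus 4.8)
The strategy is to verify, for each critical value $\beta$ of $\|\Phi_N\|^2$ on $N=M\times\overline{\Ocal_\mu}$, the numerical inequality \eqref{eq.RR-beta=0.bis}, namely
$$
\|\Phi_N(n)\|^2+\tfrac12\tr_{\Phi_N(n)}|\T_n N|>\tr_{\Phi_N(n)}|\kgot|
$$
for all $n\in C_\beta$, with the single exception $\beta=-\rho_\sigma$ in the case $\mu\in\overline\sigma$. Once this is done, Proposition \ref{prop:RR-beta=0} kills the invariant part of each $\Qcal^K_\beta(N)$, and the decomposition \eqref{eq:witten-beta} collapses to the asserted form. The first thing I would do is use $K$-invariance to reduce to the case $\beta\in\tgot^*_+$ (replacing $\beta$ by a conjugate and using that both sides of the inequality are $K$-invariant functions of the pair $(n,\Phi_N(n))$); by Example \ref{example:Tr-beta} the right-hand side is then $\tr_\beta|\kgot|=2(\rho,\beta)$.

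Next I would analyze a critical point $n=(m,\xi)\in C_\beta$. Criticality of $\|\Phi_N\|^2$ means $\beta=\Phi_N(n)=\Phi(m)-\xi$ and $\beta_N(n)=0$, i.e. $\beta_M(m)=0$ and $\beta_{\Ocal_\mu}(\xi)=0$; the latter forces $\xi$ to lie in the fixed-point set $(\Ocal_\mu)^\beta$. I would split the tangent space $\T_n N=\T_m M\oplus \T_\xi\Ocal_\mu$ and bound $\tr_\beta|\T_n N|=\tr_\beta|\T_m M|+\tr_\beta|\T_\xi\Ocal_\mu|$ from below. For the orbit factor, using $\xi\in K\cdot(\mu+\rho)$ with $\beta$ and $\xi$ appropriately positioned, $\tr_\beta|\T_\xi\Ocal_\mu|$ is controlled by the roots non-orthogonal to $\xi$ paired with $\beta$; the key arithmetic input is that for the standard orbit $\Ocal_\mu=K\cdot(\mu+\rho)$ one gets a term of size $2(\beta,\mu+\rho)$ minus a correction, so that $\tfrac12\tr_\beta|\T_\xi\Ocal_\mu|$ roughly compensates $2(\rho,\beta)-2(\beta,\xi)$ against $\|\beta\|^2=\|\Phi(m)-\xi\|^2$. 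The geometric constraint that $\Phi(m)$ lies in the moment polytope $\p(M)\subset\overline\sigma$, hence is orthogonal to all roots defining $\rho_\sigma$, is what singles out $\beta=-\rho_\sigma$: this is precisely the value where the inequality degenerates to an equality (the "rho shift"), and the condition $\mu\in\overline\sigma$ (so that $\mu$ is also orthogonal to those roots) is what makes $-\rho_\sigma$ actually occur as a critical value rather than being excluded.

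Concretely I would organize the estimate around the function $n\mapsto \|\Phi_N(n)\|^2+\tfrac12\tr_{\Phi_N(n)}|\T_nN|-\tr_{\Phi_N(n)}|\kgot|$ restricted to $C_\beta$, show it is $\ge\|\beta+\rho_\sigma\|^2$ up to terms that vanish by the polytope/orthogonality constraints (this is the computation I would not grind through here but which mirrors the estimates of \cite{pep-ENS}), and conclude it is $>0$ unless $\beta=-\rho_\sigma$. When $\mu\notin\overline\sigma$ there is an extra strictly positive contribution coming from the components of $\mu$ transverse to $\sigma$, so even $\beta=-\rho_\sigma$ satisfies the strict inequality and $\mm_\mu=0$. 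The main obstacle I anticipate is bookkeeping the weight contributions from $\T_\xi\Ocal_\mu$ at a general $\beta$-fixed point $\xi\in(\Ocal_\mu)^\beta$ — one must carefully track which positive roots contribute and with what sign to $\tr_\beta|\T_\xi\Ocal_\mu|$, and match this against $\tr_\beta|\kgot|$ — together with making sure the reduction to $\beta\in\tgot^*_+$ interacts correctly with the sign conventions on $\overline{\Ocal_\mu}$.
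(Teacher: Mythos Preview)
Your overall strategy---verify \eqref{eq.RR-beta=0.bis} on each $C_\beta$ and invoke Proposition \ref{prop:RR-beta=0}---is right, and the paper does the same.  But the execution sketch has several concrete gaps that would prevent the proof from going through as written.

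First, a simple but important slip: you normalize so that $\beta\in\tgot^*_+$, but the surviving critical value is $\beta=-\rho_\sigma$, which lies in the \emph{negative} chamber.  The paper instead conjugates so that $\Phi(m)$ (not $\beta$) lies in the closed Weyl chamber, and then analyzes the inequality in terms of the face $\tau$ containing $\Phi(m)$.  Second, you anticipate difficulty in ``bookkeeping the weight contributions from $\T_\xi\Ocal_\mu$''; in fact this term is trivial: since $\xi\in K\cdot(\mu+\rho)$ is regular, $\kgot_\xi$ is abelian and $\tr_\beta|\T_\xi\Ocal_\mu|=\tr_\beta|\kgot/\kgot_\xi|=\tr_\beta|\kgot|$ exactly.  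This collapses the target inequality to $\|\beta\|^2+\tfrac12\tr_\beta|\T_m M|\ge\tfrac12\tr_\beta|\kgot|$, and a further reduction (splitting off the orbit direction $\kgot/\kgot_m$ in $\T_mM$) shows that the much simpler inequality $\|\beta\|^2\ge\tfrac12\tr_\beta|\kgot_{\Phi(m)}|$ already implies it.

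The most serious gap is structural.  Your claimed lower bound $\ge\|\beta+\rho_\sigma\|^2$ is not what holds, and in fact the basic estimate has equality not only at $\beta=-\rho_\sigma$ but at $\beta\in K\cdot(-\rho_\tau)$ for \emph{every} face $\tau\subset\overline\sigma$ with $\mu\in\overline\tau$ (this comes out of the chain $\|\beta\|\ge\|a-(\mu+\rho)\|\ge\|\rho_\tau\|\ge\tfrac{1}{2\|\beta\|}\tr_\beta|\kgot_\tau|$ for $a=\Phi(m)\in\tau$).  Ruling out the intermediate faces $\tau\ne\sigma$ requires a separate argument that exploits the slack between the reduced inequality and the original one---specifically, the term $\tr_\beta|E_m|$ coming from the complement of $\kgot/\kgot_m$ in $\T_mM$---together with the defining property that $\sigma$ is the \emph{smallest} face whose closure contains $\Delta(M)$.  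The paper does this via the symplectic cross-section: if $\tr_\beta|E_m|=0$ for $\beta=-\rho_\tau$ then $\rho_\tau$ would act trivially on the principal slice $Y_\sigma$, forcing $\Delta(M)$ into a hyperplane transverse to $\sigma$, a contradiction.  Your outline does not detect that this second step is needed, so as it stands it would not distinguish $-\rho_\sigma$ from $-\rho_\tau$ for smaller faces $\tau$.
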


\medskip

When $\sigma=\tgot^*_+$, we have $\rho_\sigma=0$ and Proposition \ref{prop:key-identity} tell us that 
the multiplicity $\mm_\mu$ id equal to $\left[\Qcal^K_{0}(M\times \overline{\Ocal_\mu})\right]^K$ for any $\mu\in\what{K}$. In particular 
$\mm_\mu=0$ if $\mu+\rho\notin\Phi(M)$.

When $\sigma\neq\tgot^*_+$ and $\mu\in\overline{\sigma}$, we  precise Proposition \ref{prop:key-identity} as follow.  The generalized character 
$\Qcal^K_{-\rho_\sigma}(M\times \overline{\Ocal_\mu})$  is defined as the index of a transversally elliptic symbol 
living in a neighborhood of 
$$
C_{-\rho_\sigma}=K\left(N^{\rho_\sigma}\cap\Phi_N^{-1}(-\rho_\sigma)\right).
$$

Let $K_{\rho_\sigma}$ be the stabilizer subgroup of $\rho_\sigma$. Let $W(K_{\rho_\sigma})\subset W$ be the 
Weyl subgroup of $K_{\rho_\sigma}$. A direct computation gives that
$$
C_{-\rho_\sigma}=\bigcup_{\bar{w}\in W(K_{\rho_\sigma})\backslash W}C_{-\rho_\sigma, \bar{w}}
$$
with 
$$
C_{-\rho_\sigma, \bar{w}}= K\left(M^{\rho_\sigma}\cap\Phi^{-1}(w(\mu+\rho)-\rho_\sigma)\times \{w(\mu+\rho)\}\right).
$$
We are particularly interested in the component $C_{-\rho_\sigma, \bar{e}}$. Let us denote  $C_{-\rho_\sigma, out}$ 
the union of the $C_{-\rho_\sigma, \bar{w}}$ for $\bar{w}\neq \bar{e}$. We have a decomposition 
\begin{equation}\label{eq:Z-rho-sigma}
C_{-\rho_\sigma}=C_{-\rho_\sigma, \bar{e}}\cup C_{-\rho_\sigma, out}
\end{equation}
into closed invariant disjoint subsets. Then the generalized character $\Qcal^K_{-\rho_\sigma}(M\times \overline{\Ocal_\mu})$ 
is equal to the sum
$$
\Qcal^K_{-\rho_\sigma, \bar{e}}(M\times \overline{\Ocal_\mu})+ 
\Qcal^K_{-\rho_\sigma, out}(M\times \overline{\Ocal_\mu})
$$
where both terms correspond to the specialization of the transversally elliptic symbol to the neighborhood 
of each part of the decomposition  (\ref{eq:Z-rho-sigma}).

\begin{prop}\label{prop:key-identity-Z} Suppose that $\sigma\neq\tgot^*_+$ and that $\mu\in\overline{\sigma}$.  
The identity (\ref{eq.RR-beta=0.bis}) holds on the subset $C_{-\rho_\sigma, out}$, and then 
$$
\mm_\mu=\left[\Qcal^K_{-\rho_\sigma,\bar{e}}(M\times \overline{\Ocal_\mu})\right]^K.
$$

\end{prop}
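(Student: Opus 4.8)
The plan is to apply the vanishing criterion of Proposition \ref{prop:RR-beta=0} (in the form \eqref{eq.RR-beta=0.bis}) to each connected piece $C_{-\rho_\sigma,\bar w}$ with $\bar w \neq \bar e$, and then invoke the last sentence of Section \ref{subsec:witten-deformation} which says that the invariant part $[\Qcal^K_{C^j_\beta}(M)]^K$ vanishes whenever \eqref{eq.RR-beta=0.bis} holds on the component $C^j_\beta$. Summing over $\bar w \neq \bar e$ and using that $\Qcal^K_{-\rho_\sigma}(M\times\overline{\Ocal_\mu}) = \Qcal^K_{-\rho_\sigma,\bar e}(M\times\overline{\Ocal_\mu}) + \Qcal^K_{-\rho_\sigma,out}(M\times\overline{\Ocal_\mu})$ together with the conclusion of Proposition \ref{prop:key-identity} then gives the asserted formula $\mm_\mu = [\Qcal^K_{-\rho_\sigma,\bar e}(M\times\overline{\Ocal_\mu})]^K$. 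So the whole content is the verification of \eqref{eq.RR-beta=0.bis} on $C_{-\rho_\sigma,out}$.

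First I would fix $\bar w \neq \bar e$ and a point $(m,w(\mu+\rho)) \in C_{-\rho_\sigma,\bar w}$, so that $\Phi_N(m,w(\mu+\rho)) = \Phi(m) - w(\mu+\rho) = -\rho_\sigma$, i.e. $\Phi(m) = w(\mu+\rho) - \rho_\sigma$, and $m \in M^{\rho_\sigma}$. The inequality to be checked is
$$
\|\rho_\sigma\|^2 + \tfrac12 \tr_{\rho_\sigma}|\T_{(m,w(\mu+\rho))} N| > \tr_{\rho_\sigma}|\kgot|.
$$
Here $\tr_{\rho_\sigma}|\kgot| = 2(\rho,\rho_\sigma) = 2\|\rho_\sigma\|^2$, using Example \ref{example:Tr-beta} and the fact that $\rho_\sigma$ lies in the closed Weyl chamber with $(\rho,\rho_\sigma) = \|\rho_\sigma\|^2$ (since $\rho - \rho_\sigma \perp \rho_\sigma$). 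The tangent space splits as $\T N = \T M \oplus \T\Ocal_\mu$, so $\tr_{\rho_\sigma}|\T N| = \tr_{\rho_\sigma}|\T_m M| + \tr_{\rho_\sigma}|\T_{w(\mu+\rho)}\Ocal_\mu|$. The first summand is $\geq 0$ and can be discarded; the key is a lower bound for $\tr_{\rho_\sigma}|\T_{w(\mu+\rho)}\Ocal_\mu|$. The desired inequality reduces to
$$
\tfrac12 \tr_{\rho_\sigma}|\T_{w(\mu+\rho)}\Ocal_\mu| > \|\rho_\sigma\|^2,
$$
which is a purely Lie-theoretic statement on the coadjoint orbit $\Ocal_\mu = K\cdot(\mu+\rho)$.

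The main obstacle — and the real heart — is this last inequality, and it must genuinely use $\bar w \neq \bar e$, i.e. $w \notin W(K_{\rho_\sigma})$, since for $w = e$ one has $\mu + \rho - \rho_\sigma \in \overline\sigma \subset \p(M)$ and the inequality can fail (indeed that is exactly the surviving term $C_{-\rho_\sigma,\bar e}$). At the point $w(\mu+\rho) \in \Ocal_\mu$, the tangent space $\T_{w(\mu+\rho)}\Ocal_\mu \cong \kgot/\kgot_{w(\mu+\rho)}$, and its $\tore_{\rho_\sigma}$-weights are the roots $\alpha$ with $(\alpha, w(\mu+\rho)) \neq 0$; hence
$$
\tr_{\rho_\sigma}|\T_{w(\mu+\rho)}\Ocal_\mu| = \sum_{\alpha > 0,\ (\alpha,w(\mu+\rho))\neq 0} |(\alpha,\rho_\sigma)|.
$$
I would compare this to $2\|\rho_\sigma\|^2 = \sum_{\alpha>0,\ (\alpha,\rho_\sigma)\neq 0}|(\alpha,\rho_\sigma)|$ (the roots orthogonal to $\sigma$, i.e. the roots of $K_{\rho_\sigma}$, counted with the weights of $\rho_\sigma$). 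The point is that $\rho_\sigma$ is regular for $K_{\rho_\sigma}$, so $\kgot_{\rho_\sigma} = \kgot_{\rho_\sigma}^{\text{torus part}}$ and every positive root of $K_{\rho_\sigma}$ contributes. One then shows: since $w \notin W(K_{\rho_\sigma})$, the point $w(\mu+\rho)$ is moved strictly out of the "wall region" fixed by $K_{\rho_\sigma}$, so that the set of roots $\alpha$ orthogonal to $\rho_\sigma$ but not orthogonal to $w(\mu+\rho)$ is nonempty — equivalently, $\kgot_{w(\mu+\rho)} \cap \kgot_{\rho_\sigma}$ is a proper subalgebra of $\kgot_{\rho_\sigma}$ — and a counting/convexity argument on the weights $(\alpha,\rho_\sigma)$ then upgrades the weak inequality $\tfrac12\tr_{\rho_\sigma}|\T_{w(\mu+\rho)}\Ocal_\mu| \geq \|\rho_\sigma\|^2$ to a strict one. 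I expect the bulk of the work to be precisely this combinatorial argument controlling which roots survive in the trace, using that $\rho_\sigma$ is the projection of $\rho$ onto the face and hence interacts with the root system in a rigid way; the rest of the proof is a bookkeeping assembly via Proposition \ref{prop:RR-beta=0} and the decomposition \eqref{eq:Z-rho-sigma}.
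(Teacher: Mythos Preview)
Your reduction has a fatal error at the step where you discard $\tr_{\rho_\sigma}|\T_m M|$ and claim the whole burden falls on the orbit term $\tr_{\rho_\sigma}|\T_{w(\mu+\rho)}\Ocal_\mu|$. Since $\mu+\rho$ is \emph{regular}, the stabilizer $\kgot_{w(\mu+\rho)}$ equals $\tgot$ for every $w\in W$, so $\T_{w(\mu+\rho)}\Ocal_\mu\cong\kgot/\tgot$ and
\[
\tr_{\rho_\sigma}|\T_{w(\mu+\rho)}\Ocal_\mu|=\sum_{\alpha>0}|(\alpha,\rho_\sigma)|=2(\rho,\rho_\sigma)=2\|\rho_\sigma\|^2,
\]
\emph{independently of $w$}. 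Your ``key inequality'' $\tfrac12\tr_{\rho_\sigma}|\T_{w(\mu+\rho)}\Ocal_\mu|>\|\rho_\sigma\|^2$ is therefore the false statement $\|\rho_\sigma\|^2>\|\rho_\sigma\|^2$. The condition ``$(\alpha,w(\mu+\rho))\neq 0$'' you impose is vacuous, and the roots you try to count (``orthogonal to $\rho_\sigma$ but not to $w(\mu+\rho)$'') contribute $|(\alpha,\rho_\sigma)|=0$ to the trace anyway. The strictness cannot be extracted from the orbit factor; it has to come from the $M$ side, which you threw away.

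The paper does not attack $\tr_{\rho_\sigma}|\T_m M|$ directly either. Instead it first conjugates so that $\Phi(m)=a$ lies in the Weyl chamber, say in a face $\tau$, and proves the stronger scalar inequality $\|\beta\|^2\geq\tfrac12\tr_\beta|\kgot_\tau|$ (inequality {\bf (IV)}), whose strict version forces strict \eqref{eq.RR-beta=0.bis}. The argument is a chain of three elementary inequalities: the convexity estimate $\|a-w(\mu+\rho)\|\geq\|a-(\mu+\rho)\|$ (strict unless $w\in W(K_\tau)$), the orthogonal-projection bound $\|a-(\mu+\rho)\|\geq\|\rho_\tau\|$, and $\tfrac12\tr_\beta|\kgot_\tau|\leq\|\rho_\tau\|\,\|\beta\|$. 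Equality in {\bf (IV)} forces $\tau$ to be a face with $\mu\in\overline{\tau}$, $\beta\in K\cdot(-\rho_\tau)$, and the point to lie in $C_{-\rho_\tau,\bar e}$. Since the $C_{-\rho_\tau}$ for distinct faces lie over distinct coadjoint orbits, a point of $C_{-\rho_\sigma,\bar w}$ with $\bar w\neq\bar e$ cannot satisfy equality in {\bf (IV)}, hence \eqref{eq.RR-beta=0.bis} is strict there. The $w$-dependence enters through $\|\beta\|$ after normalization, not through the orbit tangent space.
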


Note that $C_{-\rho_\sigma, e}=\emptyset$ if $\mu+\rho-\rho_\sigma\notin \Phi(M)$. At this stage we know then 
that $\mm_\mu=0$ if $\mu+\rho-\rho_\sigma$ does not belongs to the image of the moment map.

\subsection{Proofs of Propositions \ref{prop:key-identity} and \ref{prop:key-identity-Z}}

Let $N=M\times \overline{\Ocal_\mu}$ and let $\|\Phi_N\|^2: N\to \R$ be the square of the moment map. Recall that 
we denote by $\sigma$ the smallest face of the Weyl chamber so that $\Phi(M)\cap\tgot^*_+\subset\overline{\sigma}$. 

We want to prove that for any $n=(m,\xi)\in \Cr(\|\Phi_N\|^2)$ the vector $\beta:=\Phi(m)-\xi$ satisfies
$$
{\bf (I)}\qquad\qquad \parallel\beta\parallel^2 +\frac{1}{2}\tr_{\beta}|\T_n N | \geq\tr_{\beta}|\kgot|.
$$
Afterwards we will discuss the case of equality in ${\bf (I)}$.

The tangent space $\T_\xi\Ocal_\mu$ is equal to the $\kgot_\xi$-module $\kgot/\kgot_\xi$: then 
\begin{eqnarray*}
\tr_{\beta}|\T_\xi\Ocal_\mu|&=&\tr_{\beta}|\kgot|-\tr_{\beta}|\kgot_\xi | \\
&=&\tr_{\beta}|\kgot|,
\end{eqnarray*}
since $\beta$ belongs to the abelian subalgebra $\kgot_\xi$. 
Using that $\tr_{\beta}|\T_n N |=\tr_{\beta}|\T_m M | + \tr_{\beta}|\kgot|$, we see that ${\bf (I)}$ 
is equivalent to 
$$
{\bf (II)}\qquad\qquad \parallel\beta\parallel^2 +\frac{1}{2}\tr_{\beta}|\T_m M | \geq\frac{1}{2}\tr_{\beta}|\kgot|.
$$

The module $\kgot/\kgot_m$ is naturally a subspace of $\T_m M$. Let $E_m$ be a $K_m$-equivariant supplement 
to $\kgot/\kgot_m$ in $\T_m M$. Using that $\tr_{\beta}|\T_m M |= 
\tr_{\beta}|\kgot/\kgot_m | + \tr_{\beta}|E_m |$, we see that ${\bf (II)}$ is equivalent to  
$$
{\bf (III)}\qquad\qquad \parallel\beta\parallel^2 +\frac{1}{2}\tr_{\beta}|E_m | \geq\frac{1}{2}\tr_{\beta}|\kgot_m|.
$$

Thanks to the inclusion $\kgot_m\subset\kgot_{\Phi(m)}$, we see that ${\bf (I)}\Leftrightarrow{\bf (II)}\Leftrightarrow{\bf (III)}$ 
are induced by the following inequality
$$
{\bf (IV)}\qquad\qquad \qquad \parallel\beta\parallel^2 \geq\frac{1}{2}\tr_{\beta}|\kgot_{\Phi(m)}|.
$$

\begin{lem}\label{lem:ineq-4}
$\bullet$ For any $(m,\xi)\in \Cr(\|\Phi_N\|^2)$ the vector $\beta:=\Phi(m)-\xi$ satisfies the 
inequality {\bf (IV)}.

$\bullet$ Let $(m,\xi)\in \Cr(\|\Phi_N\|^2)$ such that  $\beta:=\Phi(m)-\xi$ satisfies the 
$ \parallel\beta\parallel^2 =\frac{1}{2}\tr_{\beta}|\kgot_{\Phi(m)}|$. Then there exists a face $\tau$ of 
$\sigma$ such that 
\begin{enumerate}
\item  $\mu\in\overline{\tau}$
\item  $(m,\xi)$ belongs to the $K$-orbit of $\Phi^{-1}(\mu+\rho-\rho_\tau)\times \{\mu+\rho\}\subset N$.
\item $\beta$ belongs to the coadjoint orbit $K\cdot(-\rho_\tau)$.
\end{enumerate}
\end{lem}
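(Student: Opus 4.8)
The plan is to exploit the structure of critical points of $\|\Phi_N\|^2$ on $N = M\times\overline{\Ocal_\mu}$ and relate $\beta = \Phi(m)-\xi$ to the geometry of the Weyl chamber. First I would recall the standard description: if $n=(m,\xi)\in\Cr(\|\Phi_N\|^2)$ with $\beta=\Phi_N(n)=\Phi(m)-\xi$, then $\beta\in\kgot_n$, so $\beta$ fixes both $m$ and $\xi$; in particular $\beta\in\kgot_m\cap\kgot_\xi$ and $\xi\in\Ocal_\mu^\beta$. After conjugating by $K$ we may assume $\beta$ lies in the closed Weyl chamber $\tgot^*_+$ (identified with a subset of $\tgot$ via the invariant scalar product). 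The key input is that $\Phi(M)\cap\tgot^*_+\subset\overline{\sigma}$, so every $\xi'\in\Phi(M)$ lies in $K\cdot\overline{\sigma}$, and since $\xi$ lies in $\Ocal_\mu=K\cdot(\mu+\rho)$ we can pin down $\xi$ up to the Weyl group. The quantity $\tr_\beta|\kgot_{\Phi(m)}|$ equals $2(\rho_{\kgot_{\Phi(m)}},\beta)$ where $\rho_{\kgot_{\Phi(m)}}$ is half the sum of the positive roots of $\kgot_{\Phi(m)}$ (those roots orthogonal to the face containing $\Phi(m)$), by the same computation as in Example \ref{example:Tr-beta} applied to the reductive subalgebra $\kgot_{\Phi(m)}$.

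For the inequality {\bf (IV)}, I would use the fact that at a critical point, $\beta$ is obtained from $\Phi(m)$ and $\xi$ by a convexity/nearest-point argument: $\beta = \Phi(m)-\xi$ where $\Phi(m)\in\Phi(M)$ and $-\xi$ is (after suitable identifications) the point of $-\Ocal_\mu$ closest to $\beta$ among the fixed-point considerations. Concretely, since $\Phi(M)\subset\overline{\sigma}$ (after moving into the chamber) and $\Ocal_\mu=K\cdot(\mu+\rho)$, the critical equation forces $\Phi(m)$ and $\xi$ to lie on the affine line through $0$ spanned by $\beta$, and $\xi = w(\mu+\rho)$ for some $w\in W$. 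Then $\|\beta\|^2 = (\beta,\Phi(m)) - (\beta,\xi)$. The term $(\beta,\Phi(m))$ is controlled from below because $\Phi(m)$ lies in the chamber-region $\overline{\sigma}$ and $\beta$ is of the special form $-\rho_\tau$-type (a negative sum of roots orthogonal to a face). The heart is to show $(\beta,\Phi(m)-\xi)\geq (\rho_{\kgot_{\Phi(m)}},\beta)$; I would split $(\beta,\Phi(m))\geq 0$ type estimates using that $\Phi(m)$ and $\beta$ are ``compatibly placed'' relative to the walls, and a lower bound for $-(\beta,\xi) = -(\beta, w(\mu+\rho))$ using that $\mu+\rho$ is strictly dominant and $\beta$ is anti-dominant-ish, so $-(\beta,w(\mu+\rho))\geq -(\beta,\mu+\rho)$. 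Combining with $(\rho,\beta)$ bounds gives {\bf (IV)}.

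For the equality case, I would trace back through the chain of inequalities {\bf (I)}$\Leftrightarrow${\bf (II)}$\Leftrightarrow${\bf (III)}$\Leftrightarrow${\bf (IV)}: equality in {\bf (IV)} forces $\kgot_m = \kgot_{\Phi(m)}$ (so $E_m$ contributes nothing relevant), forces $\xi = \mu+\rho$ exactly (i.e. $w=e$ in the bound $-(\beta,w(\mu+\rho))\geq -(\beta,\mu+\rho)$, using strict dominance of $\mu+\rho$), and forces $\Phi(m)$ to lie in the specific face $\tau$ with $\beta\in K\cdot(-\rho_\tau)$. The face $\tau$ is characterized as the one whose orthogonal roots are exactly the roots pairing to $\|\beta\|^2$-achieving values with $\beta$; since $\Phi(M)\subset\overline\sigma$, $\tau$ must be a face of $\sigma$, and $\Phi(m) = \mu+\rho-\rho_\tau$ then follows by solving the critical-point equation $\beta = \Phi(m)-\xi = (\mu+\rho-\rho_\tau)-(\mu+\rho) = -\rho_\tau$. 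The condition $\mu\in\overline\tau$ drops out because $\xi=\mu+\rho$ must be fixed by $\beta=-\rho_\tau$, i.e. $\mu+\rho\in\kgot^{\rho_\tau}$, which (given $\rho\in(\tgot^*_+)^o$ and the standard identity $\rho = \rho_\tau + (\text{something in }\tau)$) is equivalent to $\mu\in\overline\tau$.

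I expect the main obstacle to be the equality-case bookkeeping: showing that the \emph{several} inequalities collapsing simultaneously pins down $\Phi(m)$, $\xi$, and $\beta$ to exactly the claimed $K$-orbit, rather than merely constraining them. In particular, carefully extracting that $w=e$ (no residual Weyl ambiguity beyond $W(K_{\rho_\sigma})$, matching the decomposition of $C_{-\rho_\sigma}$ in \eqref{eq:Z-rho-sigma}) and that $\kgot_m$ cannot be strictly larger than forced, will require a careful root-by-root analysis of which terms in $\tr_\beta|\T_mM|$ and $\tr_\beta|\kgot|$ are active. The inequality {\bf (IV)} itself should be comparatively routine convexity once the reduction to the chamber is in place.
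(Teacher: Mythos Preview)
Your outline is in the right spirit---reduce by $K$-conjugation, identify $\xi=w(\mu+\rho)$, and control $\tr_\beta|\kgot_{\Phi(m)}|$ via $\rho_\tau$---but the core of the argument for {\bf (IV)} is not yet there, and the normalization you pick makes it harder. Writing $\|\beta\|^2=(\beta,\Phi(m))-(\beta,\xi)$ and proposing to ``bound each term'' is tautological: that identity is just $(\beta,\beta)$, so no new inequality arises from the splitting. Your claimed bound $-(\beta,w(\mu+\rho))\geq -(\beta,\mu+\rho)$ would need $\beta$ anti-dominant, which conflicts with your normalization that $\beta\in\tgot^*_+$; and in any case this pairing estimate does not by itself produce the comparison with $\|\rho_\tau\|$ that is needed. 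The paper instead normalizes so that $a:=\Phi(m)$ lies in the Weyl chamber (hence in a face $\tau$ of $\sigma$), and then chains three \emph{norm} inequalities:
\[
\|\beta\|^2 \;\geq\; \|\beta\|\cdot\|a-(\mu+\rho)\| \;\geq\; \|\beta\|\cdot\|\rho_\tau\| \;\geq\; \tfrac12\tr_\beta|\kgot_\tau|.
\]
The first is the standard convexity fact that $\|a-w(\mu+\rho)\|\geq\|a-(\mu+\rho)\|$ for dominant $a$; the last is the Cauchy--Schwarz bound from Example~\ref{example:Tr-beta}. The step you are missing is the middle one: projecting $a-(\mu+\rho)$ onto the $\rho_\tau$-direction and using $(\mu,\rho_\tau)\geq 0$ together with $(\rho-\rho_\tau-a,\rho_\tau)=0$ (since $\rho-\rho_\tau,a\in\tau$ and $\rho_\tau\perp\tau$) gives $\|a-(\mu+\rho)\|\geq\|\rho_\tau\|$. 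This projection estimate is the heart of the lemma and does not appear in your plan.

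For the equality case you are also over-reaching. The lemma concerns equality in {\bf (IV)} alone; it does \emph{not} assert anything about $\kgot_m$ versus $\kgot_{\Phi(m)}$ or about $E_m$, so your remark ``equality in {\bf (IV)} forces $\kgot_m=\kgot_{\Phi(m)}$'' is misplaced (that would be relevant only when comparing {\bf (III)} and {\bf (IV)}, which happens later in the paper, not here). With the paper's three-inequality chain, equality in {\bf (IV)} forces equality in each link, and these read off directly: equality in the first gives $w\in W(K_\tau)$; equality in the projection step gives $(\mu,\rho_\tau)=0$ (so $\mu\in\overline\tau$) and $a-(\mu+\rho)$ parallel to $\rho_\tau$, hence $a=\mu+\rho-\rho_\tau$; then $\beta=-w\rho_\tau\in K\cdot(-\rho_\tau)$. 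No separate ``root-by-root'' bookkeeping is needed once the chain is set up correctly.
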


\begin{proof} Up to the multiplication of $(m,\xi)$ by an element of $K$, we can assume that $\beta\in\tgot^*$. 
Up to the multiplication of $n=(m,\xi)$ by an element of the stabilizer subgroup $K_\beta:=\{k\in K\,\vert\, 
{\rm Ad}(k)\beta=\beta\}$ 
we can assume that $n=(m,w(\mu+\rho))$ with $m\in M^\beta$ and $\Phi(m)=\beta+w(\mu+\rho)\in\tgot^*$. 

Up to the multiplication of $n=(m,w(\mu+\rho))$ by an element of the Weyl group, we can assume that 
$\Phi(m)$ belongs to the Weyl chamber: let $\tau$ be the face of $\sigma$ 
containing $\Phi(m)$ so that $K_{\Phi(m)}=K_\tau$.

So we have to prove that for $\Phi(m)=a\in\tau$ and $w\in W$ the vector $\beta=a-w(\mu+\rho)$ 
satisfies the relation
\begin{equation}\label{eq:lem-1}
\parallel\beta\parallel^2 \geq \frac{1}{2}\tr_{\beta}|\kgot_\tau|.
\end{equation}

The inequality (\ref{eq:lem-1}) is the consequence of three basic inequalities.

We have
\begin{equation}\label{eq:inegalite-1}
\|a-w(\mu+\rho)\|\geq \| a-(\mu+\rho)\| 
\end{equation}
for any $w\in W$, and (\ref{eq:inegalite-1}) is strict unless $w\in W(K_\tau)$. In order to prove (\ref{eq:inegalite-1}), 
we consider the function $\xi\in K\cdot (\mu+\rho)\longmapsto \|\xi-a\|^2=\|a\|^2+\|b\|^2-2(\xi,a)$.
It is a classical result of symplectic geometry that the function $\xi\mapsto (\xi,a)$ has 
a unique maximum on the coadjoint orbit $K\cdot (\mu+\rho)$ which is reached on an orbit of the stabilizer subgroup 
$K_a=K_\tau$. Since $a$ and $\mu+\rho$ belongs to the Weyl chamber, one checks easily that this maximum  is obtained on the 
orbit $K_\tau(\mu+\rho)$. Hence $\|k(\lambda+\rho)-a\|^2\geq\|(\lambda+\rho)-a\|^2$ for any $k\in K$ with equality only if 
$k\in K_\tau$. Inequality (\ref{eq:inegalite-1}) is proved by taking 
$k=w$.

On the other hand we have 
\begin{eqnarray}\label{eq:inegalite-2}
\|\mu+\rho-a\|&\geq& \frac{(\mu+\rho-a,\rho_\tau)}{\|\rho_\tau\|}\nonumber\\
&=& \frac{1}{\|\rho_\tau\|}\underbrace{(\mu,\rho_\tau)}_{\geq 0}+
\frac{1}{\|\rho_\tau\|}\underbrace{(\rho-\rho_\tau-a,\rho_\tau)}_{=0}+ 
\frac{1}{\|\rho_\tau\|}(\rho_\tau,\rho_\tau)\nonumber\\
&\geq & \|\rho_\tau\|.
\end{eqnarray}
Note that (\ref{eq:inegalite-2}) is strict unless $\mu\in\overline{\tau}$ and $\mu+\rho-a=\rho_\tau$. The third inequality is 
\begin{equation}\label{eq:inegalite-3}
\frac{1}{2}\tr_{\beta}|\kgot_\tau|\leq \|\rho_\tau\|\, \|\beta\|.
\end{equation}
See Example \ref{example:Tr-beta}. If we put (\ref{eq:inegalite-1}),  (\ref{eq:inegalite-2}) and  (\ref{eq:inegalite-3}) together we have
$$
\parallel\beta\parallel^2\,\geq\, \parallel\beta\parallel\, \| a-(\mu+\rho)\|\,\geq \,
\parallel\beta\parallel \|\rho_\tau\| \,\geq \, \frac{1}{2}\tr_{\beta}|\kgot_\tau|,
$$
and the equality $\parallel\beta\parallel^2=\frac{1}{2}\tr_{\beta}|\kgot_\tau|$ holds if and  
only if we have the equality in (\ref{eq:inegalite-1}),  (\ref{eq:inegalite-2}) and  (\ref{eq:inegalite-3}). 

But equalities in (\ref{eq:inegalite-1}) and (\ref{eq:inegalite-2}) gives that $w\in W(K_\tau)$, $\mu\in\overline{\tau}$ 
and $a=\mu+\rho-\rho_\tau\in\tau$. Then $(m,w(\mu+\rho))= w(m',\mu+\rho)$ with 
$\Phi(m')=w^{-1}(\mu+\rho-\rho_\tau)=\mu+\rho-\rho_\tau$ and $\beta= \mu+\rho-\rho_\tau-w(\mu+\rho)=
-w\rho_\tau$. We have then   
$$
\frac{1}{2}\tr_{\beta}|\kgot_\tau|=\frac{1}{2}\tr_{\rho_\tau}|\kgot_\tau|= \|\rho_\tau\|^2
$$
which is the equality in (\ref{eq:inegalite-3}).

\end{proof}

\medskip

Since the strict inequality in {\bf (IV)} implies the strict inequality in {\bf (I)}, Lemma \ref{lem:ineq-4} tells us that the 
identity (\ref{eq.RR-beta=0.bis}) holds on $C_\beta$ for all $\beta\in\Bcal^\mu$ when $\mu\notin\overline{\sigma}$. When 
$\mu\in\overline{\sigma}$ the identity 
(\ref{eq.RR-beta=0.bis}) holds 
\begin{enumerate}
\item  on $C_\beta$ for the $\beta$ which are not  in $K\cdot(-\rho_\tau)$, where $\tau$ is a face of $\sigma$ such that $\mu\in\overline{\tau}$, 
\item on $C_{-\rho_\sigma,w}$ for all the $\bar{w}\neq \bar{e}$.
\end{enumerate}

The proof of Proposition \ref{prop:key-identity} is completed by

\begin{lem}
Let $\tau$ be a face of $\sigma$, distinct from $\sigma$, such that $\mu\in\overline{\tau}$. Then the identity (\ref{eq.RR-beta=0.bis}) 
holds for $C_\beta$ for $\beta=-\rho_\tau$.
\end{lem}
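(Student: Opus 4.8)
The plan is to establish the inequality $(\ref{eq.RR-beta=0.bis})$ on $C_{-\rho_\tau}$ directly, by understanding precisely which points $n=(m,\xi)$ lie in this set and then comparing the norm of $\beta=-\rho_\tau$ with $\frac{1}{2}\tr_{\beta}|\T_n N|$. By the second part of Lemma \ref{lem:ineq-4}, a point $(m,\xi)\in\Cr(\|\Phi_N\|^2)$ with $\beta=\Phi(m)-\xi$ in the orbit $K\cdot(-\rho_\tau)$ and realizing equality in {\bf (IV)} must lie in the $K$-orbit of $\Phi^{-1}(\mu+\rho-\rho_\tau)\times\{\mu+\rho\}$; so after conjugating we may assume $\Phi(m)=\mu+\rho-\rho_\tau\in\tau$, $\xi=\mu+\rho$, and $\beta=-\rho_\tau$. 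The key observation is that, unlike in the generic case, the chain ${\bf (I)}\Leftrightarrow{\bf (II)}\Leftrightarrow{\bf (III)}\Leftrightarrow{\bf (IV)}$ of equivalences shows that equality in {\bf (IV)} forces equality all the way up to {\bf (I)} \emph{unless} one of the intermediate ``$\leq$'' steps is strict. I would therefore look for a strict intermediate inequality coming from the fact that $\tau$ is a \emph{proper} face of $\sigma$.

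The heart of the argument is this: since $\tau\subsetneq\sigma$ and $\sigma$ is the \emph{smallest} face with $\Phi(M)\cap\tgot^*_+\subset\overline{\sigma}$, the point $\Phi(m)=\mu+\rho-\rho_\tau$ lies in $\overline{\sigma}$ but in the proper face $\overline{\tau}$; this means some positive root that is orthogonal to $\tau$ (hence contributing to $\rho_\tau$) is \emph{not} orthogonal to $\sigma$, and more to the point the stabilizer $K_{\Phi(m)}=K_\tau$ is strictly larger than $K_\sigma$. The plan is to track where this slack appears. In passing from {\bf (II)} to {\bf (III)} we discarded the term $\tr_{\beta}|\kgot/\kgot_m|$ from the left side and $\tr_\beta|\kgot/\kgot_m|$ cancelled with part of $\tr_\beta|\T_m M|$; in passing from {\bf (III)} to {\bf (IV)} we dropped the non-negative term $\frac12\tr_\beta|E_m|$ on the left and enlarged the right side using $\kgot_m\subset\kgot_{\Phi(m)}=\kgot_\tau$. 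I expect that $\beta=-\rho_\tau$ acts nontrivially somewhere forcing genuine slack: either $\tr_{\rho_\tau}|E_m|>0$, or more robustly $\kgot_m\subsetneq\kgot_\tau$ with $\rho_\tau$ acting nontrivially on $\kgot_\tau/\kgot_m$, so that $\tr_\beta|\kgot_m|<\tr_\beta|\kgot_\tau|$; either way the inequality ${\bf (III)}$ — and hence ${\bf (I)}$ — is strict even though ${\bf (IV)}$ is an equality.

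To make the last point precise I would argue as follows. On the symplectic slice $Y_\sigma=\Phi^{-1}(\sigma)$ the subgroup $[K_\sigma,K_\sigma]$ acts trivially (cited from \cite{L-M-T-W}), but $\Phi(m)=\mu+\rho-\rho_\tau\in\tau$ lies in the larger cross-section $Y_\tau$ on which only the smaller group $[K_\tau,K_\tau]$ need act trivially along the fiber direction; the normal directions to $K\cdot m$ inside $M^{\rho_\tau}$ versus inside $M$ differ by weights of $\rho_\tau$, and since $\rho_\tau\neq 0$ (as $\tau\neq\tgot^*_+$) at least one such weight is nonzero. More cleanly: $\Phi(m)$ is a regular value direction for the action restricted to a slice near $\sigma$ but $\Phi(m)\notin\sigma$, so $m\in M^{\rho_\tau}$ is \emph{not} in $M^{\rho_\sigma'}$ for the relevant complementary directions, giving $\tr_{\rho_\tau}|\T_mM|>\tr_{\rho_\tau}|\kgot/\kgot_m|+(\text{the part already accounted for})$. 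The main obstacle I anticipate is making this ``extra positive weight'' statement airtight — i.e. proving that the defect between $\kgot_\tau$ and $\kgot_m$ (or the $E_m$ contribution) is genuinely detected by $\rho_\tau$ and not annihilated by it — which requires using the minimality of $\sigma$ together with the Marle–Guillemin–Sternberg normal form near $\Phi^{-1}(\tau)$; once that strictness is in hand, the strict version of {\bf (I)} gives $[\Qcal^K_{-\rho_\tau}(M\times\overline{\Ocal_\mu})]^K=0$ by Proposition \ref{prop:RR-beta=0}, completing the proof.
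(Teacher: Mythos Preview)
Your overall strategy matches the paper's: reduce to a point $(m,\mu+\rho)$ with $\Phi(m)=\mu+\rho-\rho_\tau\in\tau$ and $\beta=-\rho_\tau$, observe that equality holds in ${\bf (IV)}$, and then look for genuine slack in the passage ${\bf (III)}\Rightarrow{\bf (IV)}$, i.e.\ show
\[
\tr_{\rho_\tau}|E_m|\;+\;\tr_{\rho_\tau}|\kgot_\tau/\kgot_m|\;>\;0.
\]
This is exactly the paper's key inequality. The gap is that your third paragraph does not actually prove it. The sentence ``since $\rho_\tau\neq 0$ at least one such weight is nonzero'' is a non sequitur: nonvanishing of $\rho_\tau$ says nothing about whether its infinitesimal action on $E_m$ or on $\kgot_\tau/\kgot_m$ is nontrivial. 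You also write ``the smaller group $[K_\tau,K_\tau]$,'' but in fact $K_\tau\supset K_\sigma$ (a smaller face has a larger stabilizer), and there is no general statement that $[K_\tau,K_\tau]$ acts trivially on $Y_\tau$; that triviality holds only for the principal slice $Y_\sigma$. So the heuristic you sketch does not close.

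Here is how the paper converts the minimality of $\sigma$ into the needed strictness, via a clean contradiction argument that you should replace your third paragraph with. Since $m\in Y_\tau$ and $\T_m M\simeq \kgot/\kgot_\tau\oplus \T_m Y_\tau$, the vanishing $\tr_{\rho_\tau}|E_m|+\tr_{\rho_\tau}|\kgot_\tau/\kgot_m|=0$ is equivalent to $\tr_{\rho_\tau}|\T_m Y_\tau|=0$, i.e.\ $\rho_\tau$ acts trivially on $\T_m Y_\tau$ and hence on the connected slice $Y_\tau$. As $Y_\sigma\subset Y_\tau$, $\rho_\tau$ would then act trivially on $Y_\sigma$. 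But $\rho_\sigma\in[\kgot_\sigma,\kgot_\sigma]$ already acts trivially on $Y_\sigma$, so the element $\rho_{\tau/\sigma}:=\rho_\tau-\rho_\sigma\in\R\sigma$ acts trivially on $Y_\sigma$; by the moment map condition the linear function $\xi\mapsto(\xi,\rho_{\tau/\sigma})$ is then constant on $\Delta(M)=\overline{\Phi(Y_\sigma)}$. Evaluating at $\mu+\rho-\rho_\tau\in\tau$ gives the value $0$ (since $\rho_{\tau/\sigma}\in\tau^\perp$), whereas for any $\xi$ in the open face $\sigma$ one has $(\xi,\rho_{\tau/\sigma})=(\xi,\rho_\tau)>0$ because $\tau\subsetneq\sigma$ forces some positive root orthogonal to $\tau$ to pair positively with $\sigma$. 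This contradicts the minimality of $\sigma$ (which guarantees $\Delta(M)$ meets $\sigma$), and the strict inequality in ${\bf (I)}$ follows. No normal-form computation is needed.
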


\begin{proof} Let $\beta=-\rho_\tau$. The critical set $C_{-\rho_\tau}:=K(N^{\rho_\tau}\cap\Phi_N^{-1}(-\rho_\tau))$   admits the 
decomposition $C_{-\rho_\tau}=\cup_{w\in W}C_{-\rho_\tau, w}$ where 
$$
C_{-\rho_\tau, w}= K\left(M^{\rho_\tau}\cap\Phi^{-1}(w(\mu+\rho)-\rho_\tau)\times \{w(\mu+\tau)\}\right).
$$
It is not hard to see that $C_{-\rho_\tau, w}$ intersects $C_{-\rho_\tau, e}$ only if $w\in W(K_{\rho_\tau})$, and 
that $C_{-\rho_\tau, w}=C_{-\rho_\tau, e}$ when $w\in W(K_{\rho_\tau})$. We know then from Lemma \ref{lem:ineq-4} that the strict inequality 
in {\bf (IV)} holds on $C_{-\rho_\tau, w}$ for $w\notin W(K_{\rho_\tau})$.

Let us consider now the case where $m\in M^{\rho_\tau}\cap\Phi^{-1}(\mu+\rho-\rho_\tau)$.
We know that the equality holds in ${\bf (IV)}$ for $(m,\mu+\rho)$. 
The equality in ${\bf (I)}$ for $(m,\mu+\rho)$ is then equivalenty to 
\begin{equation}\label{eq:E-beta}
\tr_{\beta}|E_m| +\tr_{\beta}|\kgot_\tau/\kgot_m|=0.
\end{equation}

Let us prove that (\ref{eq:E-beta}) can not holds. The image of $m$ by the moment map belongs to $\tau$. 
Then $m$ belongs to the symplectic slice 
$Y_\tau\subset M$. A neighborhood $m$ is then $K\times_{K_\tau}Y_\tau$. So the tangent space at $m$ 
decomposes in two manners
\begin{eqnarray*}
\T_m M&=&\kgot/\kgot_\tau\oplus \T_m Y_\tau\\
&=& \kgot/\kgot_m \oplus E_m
\end{eqnarray*}

If (\ref{eq:E-beta}) holds we see that $\tr_{\beta}|\T_m Y_\tau|=\tr_{\beta}|E_m|=0$, which means that 
$\beta=-\rho_\tau$ acts trivially on the tangent space $\T_m Y_\tau$. Hence it would implies that 
$\rho_\tau$ acts trivially on the manifold $Y_\tau$. Since $Y_\sigma\subset Y_\tau$, the action of 
$\rho_\tau$ on the principal slice $Y_\sigma$ is also trivial.

We know that $[\kgot_\sigma,\kgot_\sigma]$ acts trivially on $Y_\sigma$: since $\rho_\sigma\in 
[\kgot_\sigma,\kgot_\sigma]$, the infinitesimal action of $\rho_\sigma$ is trivial on $Y_\sigma$. Finally if 
(\ref{eq:E-beta}) holds, we have that
$$
\rho_{\tau/\sigma}:=\rho_\tau-\rho_\sigma\in\R\sigma
$$
acts trivially on $Y_\sigma$. Note that $\rho_{\tau/\sigma}$ is a sum of weights which are orthogonal to 
$\tau$.

The moment polytope of $M$, $\Delta(M)$, which is equal to the closure of $\Phi(Y_\sigma)\subset\sigma$ 
is a convex polytope. Since the action of $\rho_{\tau/\sigma}$ is trivial on $Y_\sigma$ we knows that 
the map $\xi\in \Delta(M)\mapsto (\xi,\rho_{\tau/\sigma})$ is constant.

Finally we can use the last information in our hands: $\mu+\rho-\rho_\tau=\Phi(m)$ belongs to $\Delta(M)$. Then
for $\xi\in \Delta(M)$ we have
$$
(\xi,\rho_{\tau/\sigma})=(\mu+\rho-\rho_\tau,\rho_{\tau/\sigma})=0,
$$
since $\mu+\rho-\rho_\tau\in\tau$ and $\rho_{\tau/\sigma}\in \tau^\perp$. 
It is contradictory with the fact that $(\xi,\rho_{\tau/\sigma})= (\xi,\rho_{\sigma})>0$ for any $\xi\in\sigma$.

We have finally proved that when $(m,\xi)\in N^{\rho_\tau}\cap\Phi_N^{-1}(-\rho_\tau)$ the vector 
$\beta=\Phi(m)-\xi$ satisfies $\parallel\beta\parallel^2 +
\frac{1}{2}\tr_{\beta}|\T_m M|>\tr_{\beta}|\kgot|$.
\end{proof}

\subsection{Computation of the multiplicities when $\sigma=\tgot^*_+$}

In this section we suppose that the moment polytope $\Delta(M)=\Phi(M)\cap\tgot^*_+$ intersects the interior 
of the Weyl chamber. Let $\Delta(M)^o\subset (\tgot^*_+)^o$ be the relative interior of the moment polytope. 
We know that $\mm_\mu= [\Qcal^K_0(M\times \overline{\Ocal_\mu})]^K$ for any $\mu\in\what{K}$. 
In Definition \ref{def:indice-reduction}, we have defined the number $\Qcal(M_\mu^{\tgot^*_+})$ has follows. 
If $\mu+\rho\notin\Delta(M)^o$, we set $\Qcal(M_{\mu,\tgot^*_+})=0$. 
If $\mu+\rho\in \Delta(M)^o$, we consider, for $\esp$ generic and small enough, 
the orbifold reduced space $M_{\mu+\esp}^{\tgot^*_+}:=\Phi^{-1}(\mu+\esp+\rho)/T$ and the  
orbifold line bundle
$$
\tilde{\Lcal}_{\mu+\esp}= \left(\tilde{L}\vert_{\Phi^{-1}(\mu+\esp+\rho)}\otimes\C_{-\mu}\right)/T.
$$
The Spin quantization $\spinq(M_\mu^{\tgot^*_+})\in\Z$ is defined as the Riemann-Roch number
$$
RR(M_{\mu+\esp}^{\tgot^*_+},\tilde{\Lcal}_{\mu+\esp}).
$$

The main result of this section is the following 

\begin{theo}\label{theo:QR-1} The number 
$\left[\Qcal^K_0(M\times \overline{\Ocal_\mu})\right]^K$ 
is equal to $ \Qcal_{\hbox{{\rm \tiny spin}}}(M_\mu^{\tgot^*_+})$.  
\end{theo}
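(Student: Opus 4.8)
The plan is to identify $\left[\Qcal^K_0(M\times \overline{\Ocal_\mu})\right]^K$ with a reduction-side quantity by the induction formula of Section \ref{subsec:induction-formula}, and then to recognize that reduction-side quantity as $\Qcal_{\hbox{{\rm \tiny spin}}}(M_\mu^{\tgot^*_+})$ by reducing to the torus case already handled in Section \ref{sec:torus-case}. Concretely, $\Qcal^K_0(M\times\overline{\Ocal_\mu})$ is, by Definition \ref{def:Q-beta} (with $\beta=0$), the index localized near $C_0=\Phi_N^{-1}(0)=K\cdot\left(\Phi^{-1}(\mu+\rho)\times\{\mu+\rho\}\right)$, i.e.\ it is exactly the character $RR^K_0(M\times\overline{\Ocal_\mu},\tilde L\boxtimes \tilde L_{\Ocal_\mu})$ appearing in Proposition \ref{prop:induction}, with $N=M$, $H=K$, $a=\mu+\rho$, $H_a=K_{\mu+\rho}$, and the symplectic slice $Y_a$ equal to the cross-section $Y_{\tgot^*_+}=\Phi^{-1}((\tgot^*_+)^o)$ near $\mu+\rho$ (since $\mu+\rho$ lies in the open chamber, $K_{\mu+\rho}=T$). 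Here the prequantum bundle on $\overline{\Ocal_\mu}$ restricted to the point $\{\mu+\rho\}$ is $\C_{-\mu-\rho}$ up to the half-form twist, so that $\tilde L\vert_{Y_a}\otimes F\vert_{\{a\}}$ works out to $\tilde L\vert_{Y_{\tgot^*_+}}\otimes\C_{-\mu}$ after absorbing the $\rho$-shift (this is precisely the shift built into the translated moment map $\Phi_\tau=\Phi\vert_{Y_\tau}-(\rho-\rho_\tau)$ with $\tau=\tgot^*_+$, $\rho_\tau=0$).

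The key steps, in order, are: first, identify $\Qcal^K_0(M\times\overline{\Ocal_\mu})$ with $RR^K_0(M\times\overline{\Ocal_\mu},\tilde L\boxtimes F)$ using the excision/localization setup of Section \ref{subsec:witten-deformation} and the fact that the Kirwan vector field for $\|\Phi_N\|^2$ near $C_0$ is the relevant one; second, apply Proposition \ref{prop:induction} and its consequence \eqref{eq:induction} with $H_a=T$ to get
$$
\left[\Qcal^K_0(M\times\overline{\Ocal_\mu})\right]^K=\left[RR^{T}_{0}\left(Y_{\tgot^*_+},\, \tilde L\vert_{Y_{\tgot^*_+}}\otimes\C_{-\mu}\right)\right]^{T};
$$
third, observe that $(Y_{\tgot^*_+},\omega_{\tgot^*_+},\Phi_{\tgot^*_+})$ is a (non-compact) Hamiltonian $T$-manifold Spin-prequantized by $\tilde L\vert_{Y_{\tgot^*_+}}$ (Lemma \ref{lem:restriction-spin-slice}), and that $\Phi_{\tgot^*_+}$ is proper near $\mu$, so that the torus analysis of Section \ref{sec:torus-case} applies verbatim; fourth, run the Witten deformation on $Y_{\tgot^*_+}$ with moment map $\Phi_{\tgot^*_+}-\mu$, invoke Lemmas \ref{lem-1} and \ref{lem-2} to discard all $\beta\neq 0$ contributions and to replace $\mu$ by a nearby regular value $\mu+\esp$, and then use \eqref{eq:RR-reduit} to identify the invariant part with $RR(M_{\mu+\esp}^{\tgot^*_+},\tilde{\Lcal}_{\mu+\esp})$; fifth, conclude by Theorem \ref{theo:quant-M-sigma-mu} (independence of $\esp$) and Definition \ref{def:indice-reduction} that this equals $\Qcal_{\hbox{{\rm \tiny spin}}}(M_\mu^{\tgot^*_+})$, noting that the case $\mu+\rho\notin\Delta(M)^o$ is automatic since then either $C_0=\emptyset$ or the reduced space over a generic nearby point is empty.

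The main obstacle I expect is the careful bookkeeping in the second and third steps: namely, checking that the \emph{localized} character $\Qcal^K_0$ of the possibly-singular reduction is literally the object to which Proposition \ref{prop:induction} applies — this requires matching the neighborhood $U_0$ of $C_0$ used to define $\Qcal^K_0$ with the neighborhood $\Phi_N^{-1}(\Ccal)$ of $H\cdot a$ in the induction setup, and verifying that the symplectic slice $Y_a$ there coincides (as a Hamiltonian $T$-space, with the correct equivariant line bundle including the half-form twist) with the cross-section $Y_{\tgot^*_+}$ equipped with $\tilde L\vert_{Y_{\tgot^*_+}}\otimes\C_{-\mu}$ and moment map $\Phi_{\tgot^*_+}-\mu$. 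Once the $\rho$-shift is correctly tracked through $\Ocal_\mu=K\cdot(\mu+\rho)$ versus the translated moment map, everything reduces to the abelian results already proved, and the argument closes.
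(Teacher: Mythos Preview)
Your proposal is correct and follows essentially the same route as the paper: apply the induction formula (Proposition \ref{prop:induction}) with $a=\mu+\rho$, $H_a=T$ and slice $Y_{\tgot^*_+}=\Phi^{-1}((\tgot^*_+)^o)$ to reduce $[\Qcal^K_0(M\times\overline{\Ocal_\mu})]^K$ to $[RR^T_0(Y_{\tgot^*_+},\tilde L|_{Y_{\tgot^*_+}}\otimes\C_{-\mu})]^T$, then invoke the torus computation of Section \ref{sec:torus-case} (Lemmas \ref{lem-1}, \ref{lem-2}, and \eqref{eq:RR-reduit}) to identify this with $RR(M_{\mu+\esp}^{\tgot^*_+},\tilde\Lcal_{\mu+\esp})=\spinq(M_\mu^{\tgot^*_+})$. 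The only cosmetic difference is that the paper writes the Spin-prequantum bundle on $\overline{\Ocal_\mu}$ directly as $\C_{[-\mu]}$ (its fiber at $\mu+\rho$ being $\C_{-\mu}$), so your ``up to the half-form twist'' remark is unnecessary --- the $\rho$-shift is already absorbed in the definition $\tilde L_{\Ocal_\mu}=K\times_T\C_\mu$.
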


\begin{proof} When $\mu+\rho\notin\Delta(M)$, 
we see that $\Qcal^K_0(M\times \overline{\Ocal_\mu})=0$ since the moment map on 
$M\times \overline{\Ocal_\mu}$ does not goes through $0\in\kgot^*$. We have then 
$[\Qcal^K_0(M\times \overline{\Ocal_\mu})]^K=\spinq(M_\mu^{\tgot^*_+})=0$.

We consider now a dominant weight $\mu$ such that $\mu+\rho\in\Delta(M)$. Let $Y=\Phi^{-1}((\tgot^*_+)^o)$ 
be the symplectic slice with its canonical symplectic form $\omega_Y$. The action of $T$ on $(Y,\omega_Y)$ is 
Hamiltonian with moment map $\Phi_Y:=\Phi\vert_Y-\rho$. We know that 
$\tilde{L}\vert_Y$ Spin-prequantizes  $(Y,\omega_Y,\Phi_Y)$ (see Lemma \ref{lem:restriction-spin-slice}).

We consider the Riemann-Roch character $RR^T_0(Y,\tilde{L}\vert_Y\otimes\C_{-\mu})$ 
which is localised near $(\Phi_Y-\mu)^{-1}(0)\subset Y$. Thanks to the induction formula (\ref{eq:induction}), we know that
\begin{eqnarray*}
\mm_\mu=\left[\Qcal^K_0(M\times \overline{\Ocal_\mu})\right]^K&=&
\left[RR^K_0(M\times \overline{\Ocal_\mu},\tilde{L}\boxtimes\C_{[-\mu]})\right]^K\\
&=&\left[RR^T_0(Y,\tilde{L}\vert_Y\otimes\C_{-\mu})\right]^T\\
&=&\left[RR^T_{\Phi_{_Y}-\mu}(U,\tilde{L}\vert_U\otimes\C_{-\mu})\right]^T
\end{eqnarray*}
where $U$ is a small neighborhood of $\Phi_Y^{-1}(\mu)$ in $Y$.

The computation of the expression $[RR^T_{\Phi_{_Y}-\mu}(U,\tilde{L}\vert_U\otimes\C_{-\mu})]^T$ is identical 
to what we have done in Section \ref{sec:torus-case}. Forr $\esp$ small enough and generic,  we get  
\begin{eqnarray*}
[RR^T_{\Phi_{_Y}-\mu}(U,\tilde{L}\vert_U\otimes\C_{-\mu})]^T&=&[RR^T_{\Phi_{_Y}-\mu-\esp}(U,\tilde{L}\vert_U\otimes\C_{-\mu})]^T\\
&=&RR(M_{\mu+\esp}^{\tgot^*_+}, \tilde{\Lcal}_{\mu+\esp})\\
&=&\spinq(M_{\mu}^{\tgot^*_+}).
\end{eqnarray*}

When $\mu+\rho$ does not belong to the relative interior of $\p(M)$, we can choose $\esp$ so that $\mu+\rho+\esp\notin\p(M)$, and 
then $RR(M_{\mu+\esp}^{\tgot^*_+}, \tilde{\Lcal}_{\mu+\esp})=\spinq(M_{\mu}^{\tgot^*_+})=0$. 
\end{proof}

\subsection{Computation of the multiplicities when $\sigma\neq\tgot^*_+$}

Let $\mu\in\overline{\sigma}$ so that $\mu+\rho-\rho_\sigma\in\sigma$. In the rest of this section 
the term $\beta$ is $-\rho_\sigma$.

Let $\Qcal^K_{\beta, \bar{e}}(M\times \overline{\Ocal_\mu})$ be 
the generalized character defined as the index of a transversally elliptic symbol  defined in a neighborhood of 
$$
C_{\beta, \bar{e}}= K\left(M^{\beta}\cap\Phi^{-1}(\mu+\rho-\rho_\sigma)\times \{\mu+\rho\}\right)\subset N.
$$
We have proved in the last section that
$\mm_\mu=\left[\Qcal^K_{\beta,\bar{e}}(M\times \overline{\Ocal_\mu})\right]^K$.

\medskip

First we notice that the character $\Qcal^K_{\beta,\bar{e}}(M\times \overline{\Ocal_\mu})$ corresponds to 
the Riemann-Roch character $RR^K_{\beta,\bar{e}}(N,\tilde{L}_N)$ 
localized with the Kirwan vector field near $C_{\beta, \bar{e}}\subset\Cr(\|\Phi_N\|^2)$. 
We can look at $N$ as a $K_{\beta}$-Hamiltonian manifold, and consider the Riemann-Roch character 
$$
RR^{K_\beta}_{\beta,e}(N,-)
$$
localized with the Kirwan vector field 
near $C_\beta':=K_{\beta}(\Phi^{-1}(\mu+\rho-\rho_\sigma)\times \{\mu+\rho\})$.

We have prove in \cite{pep-RR} that 
\begin{equation}\label{eq:induction1}
RR^K_{\beta,e}(N,\tilde{L}_N)=\indB\left(RR^{K_\beta}_{\beta,e}(N,\tilde{L}_N)
\wedge_\C^\bullet(\kgot/\kgot_\beta)_\C\right)
\end{equation}
where $\indB: \Rfor(K_{\beta})\to \Rfor(K)$ is the induction map, and 
$(\kgot/\kgot_{\beta})_\C$ is the  complexification of the real $K_{\beta}$-module 
$\kgot/\kgot_{\beta}$.  It gives that 
$$
\left[RR^K_{\beta,e}(N,\tilde{L}_N)\right]^K=
\left[RR^{K_\beta}_{\beta,e}(N,\tilde{L}_N)\wedge_\C^\bullet(\kgot/\kgot_\beta)_\C\right]^{K_\beta}.
$$

\medskip

Let $Y_\sigma$ be the principal symplectic slice of $M$. Recall that the subgroup $[K_\sigma,K_\sigma]$ acts 
trivially on $Y_\sigma$ and that $\rho_\sigma$ belongs to $[\kgot_\sigma,\kgot_\sigma]$: hence 
$$
\Phi^{-1}(\mu+\rho-\rho_\sigma)\subset Y_\sigma\subset M^\beta
$$ 
and then $C_\beta'=\Phi^{-1}(\mu+\rho-\rho_\sigma)\times \{\mu+\rho\}$. We are looking at a $K_\beta$-invariant 
neighborhood $\Ucal$ of $C_\beta'$ in $N^\beta$. We consider the open neighborhood 
$K\times_{K_\sigma}Y_\sigma$ of $\Phi^{-1}(\mu+\rho-\rho_\sigma)$ in $M$. Since 
$K_{\beta}\cap K_\sigma= T$, one sees that 
$$
K_{\beta}\times_{T}Y_\sigma \subset \left(K\times_{K_\sigma}Y_\sigma\right)^{\beta}
$$
is a $K_{\beta}$-invariant neighborhood of $Y_\sigma$ in $M^\beta$. Then we can take 
$$
\Ucal:= \left(K_\beta\times_{T}Y_\sigma\right) \, \times\, K_\beta(\mu+\rho)
\ \subset \ N^\beta.
$$

We look at $\Ucal$ as a Hamiltonian $K_\beta$-manifold with moment map 
$\Phi_\Ucal([k,y],\xi)=k\Phi(y)-\xi\in\kgot_\beta^*$. The set 
$C'_{\beta}$ is a connected component of critical points of $\Cr(\|\Phi_\Ucal\|^2)$, and we consider the Riemann-Roch 
character 
$$
RR^{K_\beta}_{\beta}(\Ucal,-)
$$  
localized with the Kirwan vector field near $C'_{\beta}\subset\Ucal$.

Let $\Ncal$ be the normal bundle of $\Ucal$ in $N$. We have 
$\Ncal=\Ncal_1\boxtimes \Ncal_2$ where $\Ncal_1$ is the normal bundle of $K_{\beta}\times_{T}Y_\sigma$ 
in $K\times_{K_{\sigma}}Y_\sigma$ and $\Ncal_2$ is the normal bundle of $K_{\beta}(\mu+\rho)$ 
in $K(\mu+\rho)$.  One computes that $\Ncal_1= K_{\beta}\times_{T} N_1$ where 
$$
N_1= \sum_{\stackrel{\alpha>0}{\alpha\vert_\sigma\neq 0,\,(\alpha,\beta)\neq 0}} \kgot(\alpha),
$$
and that $\Ncal_2= K_{\beta}\times_{T} N_2$ where 
$$
N_2= \sum_{\stackrel{\alpha<0}{(\alpha,\beta)\neq 0}} \kgot(\alpha).
$$

We decompose $\Ncal$ in the sum of the polarized bundle $\Ncal^{+,\beta}$ and $\Ncal^{-,\beta}$. 
Similarly let $\Ncal_\C$ the complexified bundle, and its polarized $\beta$-positive part 
$\Ncal_\C^{+,\beta}$.

Let $S(\Ncal_\C^{+,\beta})=\sum_{k\geq 0}S^k(\Ncal_\C^{+,\beta})$ be the symmetric algebra 
vector bundle associated to $\Ncal_\C^{+,\beta}$. Let us compute the rank $n_{\beta,+}$ of the 
polarized vector bundle vector $\Ncal^{+,\beta}$. We have 
\begin{eqnarray*}
n_{\beta,+}&=&\sharp\left\{\alpha>0\  \vert \ (\alpha,\beta)> 0\ {\rm and} \ \alpha\vert_\sigma\neq 0\right\}+
\sharp\left\{\alpha<0\  \vert \  (\alpha,\beta)> 0\right\}\\
&=&\sharp\left\{\alpha>0\  \vert \ (\alpha,\beta)> 0\ \right\}+
\sharp\left\{\alpha<0\  \vert \  (\alpha,\beta)> 0\right\} \qquad\qquad \qquad(1)\\
&=& \frac{1}{2}\dim(K/K_\beta).
\end{eqnarray*}

In $(1)$ we use that $\alpha\vert_\sigma=0$ imposes $(\alpha,\rho-\rho_\sigma)=0$. Then 
$(\alpha,\beta)=-(\alpha,\rho)<0$ for $\alpha>0$. Let $\det \Ncal^{+,\beta}$ be the 
determinant line bundle associated to $\Ncal^{+,\beta}$. 

Thanks to the results in 
\cite{pep-RR}[Section 6.3], we know that 
\begin{equation}\label{eq:localisation-beta}
RR^{K_\beta}_{\beta, e}(N,\tilde{L}_N)=(-1)^{n_{\beta}^+}
RR^{K_\beta}_{\beta}\left(\Ucal,\tilde{L}_N\vert_{\Ucal}
\otimes \det \Ncal^{+,\beta}\otimes S(\Ncal_\C^{+,\beta})\right).
\end{equation}

Hence we know that $\mm_\mu=\left[RR^K_{\beta,\bar{e}}(N,\tilde{L}_N)\right]^K$ is equal to 
$(-1)^{n_{\beta}^+}$ times
\begin{eqnarray}\label{eq:induction2}
&&\left[RR^{K_\beta}_{\beta}\left(\Ucal,\tilde{L}_N\vert_{\Ucal}
\otimes \det \Ncal^{+,\beta}\otimes S(\Ncal_\C^{+,\beta})\right)
\wedge_\C^\bullet(\kgot/\kgot_{\beta})_\C\right]^{K_\beta}\nonumber
\\
&=& \sum_{k\geq 0}\left[RR^{K_\beta}_{\beta}\left(\Ucal,\tilde{L}_N\vert_{\Ucal}
\otimes \det \Ncal^{+,\beta}\otimes S^k(\Ncal_\C^{+,\beta})\right)
\wedge_\C^\bullet(\kgot/\kgot_{\beta})_\C\right]^{K_\beta}
\end{eqnarray}

Let $E\to \Ucal$ be any $K_\beta$-equivariant Hermitian vector bundle. Since $\beta$ acts 
trivially on $\Ucal$ we can look at the Lie derivative $\Lcal(\beta)$ on $E$. Then 
$\frac{1}{i}\Lcal(\beta)$ defines for each $x\in \Ucal$ a Hermitian endomorphism of $E_x$. 
Let us denote 
$$
\frac{1}{i}\Lcal(\beta) >0
$$
when all its eigenvalue on the fibers of $E$ are stricly positive. 

We made in \cite{pep-RR} the crucial observation

\begin{lem}\label{lem:indice-beta-positif}
If $\frac{1}{i}\Lcal(\beta) >0$, then 
$\left[RR^{K_\beta}_{\beta}(\Ucal,E)\right]^{K_\beta}=0$.
\end{lem}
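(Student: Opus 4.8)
The plan is to exploit that $\beta$ lies in the centre of $\kgot_\beta$ and acts trivially on $\Ucal$, so that the positivity hypothesis confines the whole localized index $RR^{K_\beta}_\beta(\Ucal,E)$ to a ``strictly positive $\beta$-sector'' of $\Rforc(K_\beta)$ which does not meet the trivial representation. First I would introduce the subtorus $\tore_\beta\subset K_\beta$ generated by $\beta$. Since ${\rm Ad}(k)\beta=\beta$ for every $k\in K_\beta$, the torus $\tore_\beta$ is central in $K_\beta$; and since $\beta$ acts trivially on $\Ucal$, the flow of the vanishing vector field $\beta_\Ucal$ shows that all of $\tore_\beta$ fixes $\Ucal$ pointwise. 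Hence $\tore_\beta$ acts trivially on $\T\Ucal$, on the $\Z/2$-graded Clifford module $\wedge^{\bullet}_{\C}\T\Ucal$ entering $\Thom(\Ucal,J)$, on the Kirwan vector field $\kappa$ and on $\Phi_\Ucal$, so that the pushed transversally elliptic symbol $\clif^\kappa_E=\Thom(\Ucal,J)\otimes E$ carries a $\tore_\beta$-action coming \emph{only} from the factor $E$.

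Next I would split $E$ into its $\tore_\beta$-isotypic components. Because $\tore_\beta$ is central in $K_\beta$, the decomposition $E=\bigoplus_i E_i$, with $\tore_\beta$ acting on $E_i$ through a single character $\chi_i\in\what{\tore_\beta}$, is a decomposition into $K_\beta$-equivariant Hermitian subbundles that is respected by $\clif^\kappa_E$; thus $RR^{K_\beta}_\beta(\Ucal,E)=\sum_i RR^{K_\beta}_\beta(\Ucal,E_i)$. The class of $\clif^\kappa_{E_i}$ in $\K_{K_\beta}(\T_{K_\beta}\Ucal)$ is homogeneous of degree $\chi_i$ for the grading of equivariant $\K$-theory by central characters of $\tore_\beta$: indeed the base is $\tore_\beta$-fixed, so the only $\tore_\beta$-action on the two bundles $\wedge^{\bullet}_{\C}\T\Ucal\otimes E_i$ underlying the symbol is the scalar $\chi_i$. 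Granting that the localized transversally elliptic index map is graded for this decomposition, $RR^{K_\beta}_\beta(\Ucal,E_i)$ lies in the closed subspace of $\Rforc(K_\beta)$ spanned by the $V^{K_\beta}_\lambda$ whose central character restricts to $\chi_i$ on $\tore_\beta$. The hypothesis $\frac{1}{i}\Lcal(\beta)>0$ means precisely that $\langle\chi_i,\beta\rangle>0$ for all $i$, so no $\chi_i$ is the trivial character.

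Finally, the trivial representation of $K_\beta$ has trivial central character, whereas each summand $RR^{K_\beta}_\beta(\Ucal,E_i)$ only involves irreducibles with the nontrivial central character $\chi_i$; therefore the trivial representation does not occur in $RR^{K_\beta}_\beta(\Ucal,E)$, i.e.\ $[RR^{K_\beta}_\beta(\Ucal,E)]^{K_\beta}=0$. The step I expect to be the main obstacle is the middle one: verifying rigorously that the localized transversally elliptic index on the non-compact manifold $\Ucal$ is genuinely graded for the central-character decomposition, so that the $\chi_i$-homogeneous symbol class is sent to the $\chi_i$-homogeneous part of $\Rforc(K_\beta)$. I would establish this from the $R(K_\beta)$-module structure of the index and the excision property recalled in Section~\ref{subsec:trans-elliptic}; since only finitely many weights $\chi_i$ occur, all the manipulations take place inside $\Rforc(K_\beta)$.
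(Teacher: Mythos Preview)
Your proposal is correct, and there is essentially nothing to compare it against: the paper does not prove this lemma here but simply records it as ``the crucial observation'' from \cite{pep-RR}. What you outline is precisely the standard argument one finds there. The point you flag as the potential obstacle---that the transversally elliptic index map on $\Ucal$ respects the $\what{\tore_\beta}$-grading when the central torus $\tore_\beta$ acts trivially on the base---is indeed the only thing requiring care, and it follows from Atiyah's multiplicative axiom: since $\tore_\beta$ acts trivially on $\Ucal$, the character $\Tr\big(RR^{K_\beta}_\beta(\Ucal,E_i)\big)$, viewed as a generalized function on $K_\beta$, satisfies $\theta(tg)=\chi_i(t)\,\theta(g)$ for $t\in\tore_\beta$ and $g\in K_\beta$ (the $t$-action enters only through the fibre of $E_i$). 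Pairing with the constant function $1$ then forces the $K_\beta$-invariant part to vanish whenever $\chi_i\neq 0$, which is guaranteed by $\langle\chi_i,\beta\rangle>0$.
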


Let us compute the Lie action $\Lcal(\beta)$ on the fibers of the bundle 
$\tilde{L}_N\vert_{\Ucal}\otimes \det \Ncal^{+,\beta}\otimes S^k(\Ncal_\C^{+,\beta})$. 
It is easy to check (see \cite{pep-ENS}) that on $\tilde{L}_N\vert_{\Ucal}\otimes \det \Ncal^{+,\beta}$  
the Lie action $\frac{1}{i}\Lcal(\beta)$ is equal to 
$$
\|\beta\|^2+ \frac{1}{2}\tr_{\beta}| \Ncal | 
$$

Look now at the Lie derivative $\Lcal(\beta)$ on $\wedge_\C^\bullet(\kgot/\kgot_\beta)_\C$. 
As a $T$-module $\wedge_\C^\bullet(\kgot/\kgot_\beta)_\C$ is equal to 
\begin{eqnarray*}
\prod_{(\alpha,\beta)\neq 0}(1-e^{i\alpha})&=&\prod_{(\alpha,\beta)<0}(1-e^{i\alpha})
\prod_{(\alpha,\beta)>0}(1-e^{i\alpha})\\
&=& (-1)^{1/2\dim (K/K_\beta)}e^{-i\delta_\beta}
\Big(\prod_{(\alpha,\beta)>0}(1-e^{i\alpha})\Big)^2
\end{eqnarray*}
with $\delta_\beta=\sum_{(\alpha,\beta)>0}\alpha$. Notice that 
$e^{-i\delta_\beta}$ defines a character of the group $K_\beta$ that we denote 
$\C_{-\delta_\beta}$. We have proved then that 
$$
\wedge_\C^\bullet(\kgot/\kgot_\beta)_\C= (-1)^{n_{\beta,+}}\C_{-\delta_\beta} \oplus R
$$
where the Lie derivative $\frac{1}{i}\Lcal(\beta)$ on $\C_{-\delta_\beta}$ is equal to 
$-(\delta_\beta,\beta)=-\tr_{\beta}| \kgot |$ and the Lie derivative $\frac{1}{i}\Lcal(\beta)$ on 
the $\kgot_\beta$-module $R$ is $>-\tr_{\beta}| \kgot |$.

\medskip 

Since $\|\beta\|^2+ \frac{1}{2}\tr_{\beta}| \Ncal | =\tr_{\beta}| \kgot |$, we can conclude  that 
the Lie derivative $\frac{1}{i}\Lcal(\beta)$

\begin{enumerate}
\item is equal to zero on $\tilde{L}_N\vert_\Ucal\otimes \det \Ncal^{+,\beta}\otimes \C_{-\delta_\beta}$, 

\item is $>0$ on $\tilde{L}_N\vert_\Ucal\otimes \det \Ncal^{+,\beta}\otimes R$, 

\item is $>0$ on $\tilde{L}_N\vert_\Ucal\otimes \det \Ncal^{+,\beta}\otimes S^k(\Ncal_\C^{+,\beta})
\otimes \wedge_\C^\bullet(\kgot/\kgot_\beta)_\C$ for any $k\geq 1$.
\end{enumerate}

With Lemma \ref{lem:indice-beta-positif}, we see that the sum (\ref{eq:induction2}) restricts  to 
$$
(-1)^{n_{\beta,+}}\left[RR^{K_\beta}_\beta\left(\Ucal,\tilde{L}_N\vert_\Ucal
\otimes \det \Ncal^{+,\beta}\right)\otimes\C_{-\delta_\beta}
\right]^{K_\beta}
$$

\medskip

At this stage we have proved that the multiplicity $\mm_\mu$ is equal to 
\begin{equation}\label{eq:multiplicite-beta}
\left[ RR^{K_\beta}_\beta\left(\Ucal,\tilde{L}_N\vert_\Ucal \otimes \det \Ncal^{+,\beta}\right)
\otimes\C_{-\delta_\beta} \right]^{K_\beta}.
\end{equation}

On the symplectic slice $(Y_\sigma,\omega_\sigma)$, we have the moment map $\Phi_\sigma-\mu$ relative to the action of 
$Z_\sigma$. The data $(Y_\sigma,\omega_\sigma,\Phi_\sigma-\mu)$ is Spin-prequantized by the line bundle 
$\tilde{L}\vert_{Y_\sigma}\otimes \C_{-\mu}$. Let 
\begin{equation}\label{eq:RR-Z-sigma}
RR^{Z_\sigma}_0(Y_\sigma,\tilde{L}\vert_{Y_\sigma}\otimes \C_{-\mu})\in \Rfor(Z_\sigma) 
\end{equation}
be the Riemann-Roch character localized near $(\Phi_\sigma-\mu)^{-1}(0)=\Phi^{-1}(\mu+\rho-\rho_\sigma)\subset Y_\sigma$.

We conluce the computation of the multiplicity $\mm_\mu$ with the 

\begin{lem}
We have 
\begin{eqnarray*}
\mm_\mu&=&\left[RR^{K_\beta}_\beta\left(\Ucal,\tilde{L}_N\vert_\Ucal
\otimes \det \Ncal^{+,\beta}\right)\otimes\C_{-\delta_\beta}\right]^{K_{\beta}}\\
&=& \left[ RR^{Z_\sigma}_0(Y_\sigma,\tilde{L}\vert_{Y_\sigma}\otimes \C_{-\mu})\right]^{Z_\sigma}\quad (1)\\
&=& \Qcal_{\hbox{\rm \tiny spin}}(M_{\mu}^\sigma). \quad\quad\quad \quad\quad\quad\quad\quad(2)
\end{eqnarray*}
\end{lem}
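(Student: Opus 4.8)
The statement concatenates two reductions — from the $K_\beta$-picture on $\Ucal$ down to the $Z_\sigma$-picture on the principal slice $Y_\sigma$, and then the torus-case identification of the latter with $\spinq(M_\mu^\sigma)$ — and I would prove the two equalities $(1)$ and $(2)$ separately, reusing the induction formula and the torus-case analysis already available. Throughout, $\tilde{L}'_\sigma:=\tilde{L}|_{Y_\sigma}\otimes\C_{-\mu}$, $\Phi'_\sigma:=\Phi_\sigma-\mu$, and $\beta=-\rho_\sigma$.

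For $(1)$, the first move is to replace the $\beta$-shifted localization by an honest $0$-localization. Because $\Ucal\subset N^\beta$, the vector field generated by $\beta$ vanishes identically on $\Ucal$, so the Kirwan vector field (\ref{eq-kappa}) of $\Phi_\Ucal$ and that of $\Phi_\Ucal-\beta$ coincide on $\Ucal$; since $C'_\beta$ is precisely the zero set of $\Phi_\Ucal-\beta$ near there, one gets $RR^{K_\beta}_\beta(\Ucal,-)=RR^{K_\beta}_0(\Ucal,-)$ computed with the moment map $\Phi_\Ucal-\beta$. I would then apply Proposition \ref{prop:induction} to $\Ucal=(K_\beta\times_T Y_\sigma)\times\overline{K_\beta(\mu+\rho)}$: here $(K_\beta)_{\mu+\rho}=T$ since $\mu+\rho$ is regular, the symplectic slice of $K_\beta\times_T Y_\sigma$ near the orbit $K_\beta(\mu+\rho)$ is a neighborhood of $\Phi^{-1}(\mu+\rho-\rho_\sigma)$ in $Y_\sigma$, and — using $\Phi|_{Y_\sigma}=\Phi_\sigma+\rho-\rho_\sigma$ — the induced moment map on that slice is exactly $\Phi'_\sigma$. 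The algebraic heart is the identity: the restriction to the slice of $\tilde{L}_N|_\Ucal\otimes\det\Ncal^{+,\beta}\otimes\C_{-\delta_\beta}$ equals $\tilde{L}'_\sigma$. Indeed the fibre at $\mu+\rho$ of the Spin-prequantum line bundle $K\times_T\C_{-\mu}$ of $\overline{\Ocal_\mu}$ contributes $\C_{-\mu}$, while $\det\Ncal^{+,\beta}$ restricts to $\C_{\delta_\beta}$ — using $N_1^{+,\beta}=\bigoplus_{\alpha>0,\,(\alpha,\beta)>0}\kgot(\alpha)$ and $N_2^{+,\beta}=\bigoplus_{\alpha<0,\,(\alpha,\beta)>0}\kgot(\alpha)$, so that the two determinants multiply to $\C_{\delta_\beta}$ — and this cancels the outer twist $\C_{-\delta_\beta}$. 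Finally $\left[{\rm Ind}^{K_\beta}_{T}E\right]^{K_\beta}=[E]^{T}$, and the subtorus $T\cap[K_\sigma,K_\sigma]$ acts trivially on $Y_\sigma$ and — since $\Phi_\sigma$ and $\mu$ are orthogonal to $[\kgot_\sigma,\kgot_\sigma]$ — on $\tilde{L}'_\sigma$, so the $T$-invariant part equals the $Z_\sigma$-invariant part. This gives $(1)$.

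For $(2)$, Lemma \ref{lem:Z-sigma-delta-action} says that $Z_\sigma^\p$ acts trivially on $Y_\sigma$ and on $\tilde{L}'_\sigma$, so $RR^{Z_\sigma}_0(Y_\sigma,\tilde{L}'_\sigma)$ is a trivial $Z_\sigma^\p$-representation and its $Z_\sigma$-invariant part coincides with the $Z_\sigma'$-invariant part of $RR^{Z_\sigma'}_0(Y_\sigma,\tilde{L}'_\sigma)$. Viewing $(Y_\sigma,\omega_\sigma,\Phi'_\sigma)$ as a Hamiltonian $Z_\sigma'$-manifold with proper, $\pf$-valued moment map, Spin-prequantized by $\tilde{L}'_\sigma$, the torus-case argument of Section \ref{sec:torus-case} — Lemmas \ref{lem-1} and \ref{lem-2} together with formula (\ref{eq:RR-reduit}), exactly as in the proof of Theorem \ref{theo:quant-M-sigma-mu} — identifies this with $RR(M^\sigma_{\mu+\esp},\tilde{\Lcal}^\sigma_{\mu+\esp})$ for $\esp\in\pf$ small and generic, which by Theorem \ref{theo:quant-M-sigma-mu} and Definition \ref{def:indice-reduction} is $\spinq(M^\sigma_\mu)$.

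The main obstacle is the bookkeeping in $(1)$: correctly pinning down the symplectic slice together with its induced moment map, and above all verifying that the two twists $\det\Ncal^{+,\beta}$ and $\C_{-\delta_\beta}$ cancel upon restriction to the slice. That cancellation is exactly what turns the net effect of the whole localization into the replacement of the $\rho$-shifted parameter $\mu+\rho$ by $\mu$ — i.e.\ what produces the correct prequantum bundle $\tilde{\Lcal}^\sigma_\mu$ on the reduced space — and getting the root combinatorics to line up with the various equivariant structures is where the real work lies.
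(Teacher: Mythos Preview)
Your proposal is correct and follows essentially the same route as the paper: apply the induction formula of Proposition~\ref{prop:induction} to reduce from $K_\beta$ on $\Ucal$ down to $T$ on $Y_\sigma$, check that $\det\Ncal^{+,\beta}$ and $\C_{-\delta_\beta}$ cancel on the slice so that only $\tilde{L}|_{Y_\sigma}\otimes\C_{-\mu}$ survives, pass from $T$-invariants to $Z_\sigma$-invariants, and then invoke the torus case for~$(2)$. The one cosmetic difference is in how the $\beta$-shift is absorbed before invoking Proposition~\ref{prop:induction}: the paper translates the orbit via $\xi\mapsto\xi+\rho_\sigma$, replacing $\overline{K_\beta(\mu+\rho)}$ by $\overline{\Ocal}=\overline{K_\beta(\mu+\rho-\rho_\sigma)}$ so that the localization on the product is literally at $0$, whereas you keep the orbit and instead shift the moment map on the first factor by $+\rho_\sigma$; since $\rho_\sigma$ is $K_\beta$-fixed these are equivalent, and both give the slice moment map $\Phi'_\sigma$ you claim.
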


\begin{proof} Let us prove that $(1)$ is a consequence of the induction formula of Proposition \ref{prop:induction}.  
First we notice that the data $(Y_\sigma,\omega_\sigma,\Phi_\sigma-\mu, \tilde{L}\vert_{Y_\sigma}\otimes \C_{-\mu})$ is naturally 
equipped with an action of the maximal torus, but with a trivial action of $T/Z_\sigma$. So the generalized 
character (\ref{eq:RR-Z-sigma}) coincides with 
$$
RR^{T}_0(Y_\sigma,\tilde{L}\vert_{Y_\sigma}\otimes \C_{-\mu})\in\Rfor(T).
$$

Let us consider the Hamiltonian  $K_\beta$-manifold
$\Ucal:= \left(K_\beta\times_{T}Y_\sigma\right) \, \times\, \overline{K_\beta(\mu+\rho)}$. 
Since $K_\beta$ acts trivially on $\rho_\sigma$ the map $\xi\mapsto \xi+\rho_\sigma$ realizes 
a $K_\beta$-equivariant symplectomorphic between the coadjoint orbits  $\overline{K_\beta(\mu+\rho)}$  and
$$
\overline{\Ocal}:=\overline{K_\beta(\mu+\rho-\rho_\sigma)}.
$$
The manifold $\Ucal$ is then symplectomorphic to $\left(K_\beta\times_{T}Y_\sigma\right) \times \overline{\Ocal}$. 
Moreover, one sees that the generalized Riemann-Roch character $RR^{K_\beta}_{\beta}(\Ucal,-)$ 
coincides with the Riemann-Roch character 
$$
RR^{K_\beta}_{0}(\left(K_\beta\times_{T}Y_\sigma\right) \times \overline{\Ocal},-)
$$ 
localized  on $C_0:=K_\beta(\Phi^{-1}(\mu+\rho-\rho_\sigma)\times\{\mu+\rho-\rho_\sigma\})$.

Since $K_\beta\cap K_\sigma= T$, the Hamiltonian $T$-manifold $Y_\sigma$ corresponds to  the symplectic slice of 
the Hamiltonian $K_\beta$-manifold $K_\beta\times_{T}Y_\sigma$. 

The bundle $\det \Ncal^{+,\beta}$ over $\left(K_\beta\times_{T}Y_\sigma\right) \times \overline{\Ocal}$ 
is equal to the product of $K_\beta\times_{T}\C_{\delta_1}\to K_\beta\times_{T}Y_\sigma$ 
with $K_\beta\times_{T}\C_{\delta_2}\to \overline{\Ocal}$, where
$$
\delta_1=\sum_{\stackrel{\alpha>0}{(\alpha,\beta)> 0}} \alpha\quad {\rm and}\quad 
\delta_2= \sum_{\stackrel{\alpha<0}{(\alpha,\beta) >0}} \alpha.
$$
The line bundle $\tilde{L}_N$ is equal to the product of $\tilde{L}$ with $K\times_T\C_{-\mu}$.  
Then the restrictions of the line bundle $\det \Ncal^{+,\beta}$  and $\tilde{L}_N$ to 
$Y_\sigma\times\{\mu+\rho-\rho_\sigma\}$ are respectively equal to, the trivial line bundle 
$\C_{\delta_1+\delta_2}=\C_{\delta_\beta}$, and to the line bundle $\tilde{L}\vert_{Y_\sigma}\otimes \C_{-\mu}$.

Finally the  induction formula of Proposition \ref{prop:induction} gives that 
\begin{eqnarray*}
RR^{K_\beta}_\beta\left(\Ucal,\tilde{L}_N\vert_\Ucal\otimes \det \Ncal^{+,\beta}\right)
&=&RR^{K_\beta}_{0}(\left(K_\beta\times_{T}Y_\sigma\right) \times \overline{\Ocal},
\tilde{L}_N\vert_\Ucal\otimes \det \Ncal^{+,\beta})\\
&=& {\rm Ind}^{^{K_\beta}}_{_T}\left(RR^{T}_0(Y_\sigma,\tilde{L}\vert_{Y_\sigma}\otimes \C_{-\mu})\otimes \C_{\delta_\beta}\right)\\
&=& {\rm Ind}^{^{K_\beta}}_{_T}\left(RR^{T}_0(Y_\sigma,\tilde{L}\vert_{Y_\sigma})\right)\otimes \C_{\delta_\beta}.
\end{eqnarray*}
Hence
\begin{eqnarray*}
\left[RR^{K_\beta}_\beta\left(\Ucal,\tilde{L}_N\vert_\Ucal \otimes \det \Ncal^{+,\beta}\right)
\otimes\C_{-\delta_\beta}\right]^{K_{\beta}}
&=& \left[ RR^{T}_0(Y_\sigma,\tilde{L}\vert_{Y_\sigma}\otimes \C_{-\mu})\right]^{T}\\
&=& \left[ RR^{Z_\sigma}_0(Y_\sigma,\tilde{L}\vert_{Y_\sigma}\otimes \C_{-\mu})\right]^{Z_\sigma}.
\end{eqnarray*}

Equality $(2)$, i.e.
$$
\left[ RR^{Z_\sigma}_0(Y_\sigma,\tilde{L}\vert_{Y_\sigma}\otimes \C_{-\mu})\right]^{Z_\sigma}
=\Qcal_{\hbox{\rm \tiny spin}}(M_{\mu}^\sigma), 
$$
has been proved in Section \ref{sec:torus-case}.

\end{proof}


{\small

}

\end{document}